\documentclass[12pt]{paper}
 \usepackage{geometry}
\usepackage{hyperref}
                \usepackage{amsthm,amsfonts,amsmath,amscd,amssymb,epsfig,verbatim,
}

 \geometry{letterpaper}                   
\usepackage{graphicx}
\usepackage{epstopdf}
\DeclareGraphicsRule{.tif}{png}{.png}{`convert #1 `dirname #1`/`basename #1 .tif`.png}

 {\title{Lagrangian caps} 
\author{ Yakov Eliashberg\thanks{Partially supported by the NSF grant DMS-1205349} \\ Stanford University  \and   Emmy Murphy
\\ MIT} 
\date{}  
 \setlength{\marginparwidth}{1.2in}
\let\oldmarginpar\marginpar
\renewcommand\marginpar[1]{\-\oldmarginpar[\raggedleft\footnotesize #1]%
{\raggedright\footnotesize #1}}

\parindent=0pt
\parskip=4pt
\hyphenation{ma-ni-fold ma-ni-folds sub-ma-ni-fold sub-ma-ni-folds}
%
%
%
\theoremstyle{plain}
\newtheorem{theorem}{Theorem}[section]
\newtheorem{thm}[theorem]{Theorem}

\newtheorem{cor}[theorem]{Corollary}

\newtheorem{prop}[theorem]{Proposition}
\newtheorem{lemma}[theorem]{Lemma}

\theoremstyle{remark}

\newtheorem{remark}[theorem]{Remark}
\newtheorem*{remark*}{Remark}
\newtheorem*{example*}{Example}

\theoremstyle{definition}

%
%

%

%

\newcommand{\wt}{\widetilde}
\newcommand{\wh}{\widehat}

\newcommand{\og}{\overline{\gamma}}

\newcommand{\oeta}{\overline{\eta}}

\newcommand{\p}{\partial}

\newcommand{\om}{\omega}

\newcommand{\eps}{\varepsilon}

\newcommand{\Z}{{\mathbb{Z}}}
\newcommand{\R}{{\mathbb{R}}}
\newcommand{\C}{{\mathbb{C}}}

%



%

\newcommand{\st}{{\rm st}}

\newcommand{\Int}{{\rm Int\,}} 

\renewcommand{\min}{{\rm min}}
\renewcommand{\max}{{\rm max}}

\newcommand{\std}{{\rm std}}

\newcommand{\tb}{{\rm tb}}

\newcommand{\Id}{\mathrm {Id}}

\newcommand{\SI}{\mathrm{SI}}

\def\Op{{\mathcal O}{\it p}\,}
%


%
\numberwithin{figure}{section}

%
%
%

\begin{document}
\maketitle
\begin{abstract}
We establish an $h$-principle for exact Lagrangian embeddings with concave Legendrian boundary. We prove, in particular, that in the complement of the unit ball $B$ in the standard symplectic $\R^{2n}, 2n\geq 6$, there exists an embedded Lagrangian $n$-disc    transversely attached  to $B$  along its  Legendrian boundary.
\end{abstract}

\section{Introduction}

{\bf Question}.
Let $B$ be the round ball in the standard symplectic $\R^{2n}$. {\it Is there an embedded  Lagrangian disc $\Delta\subset \R^{2n}\setminus \Int B$ with
$\p \Delta\subset\p B$ such that $\p \Delta$ is a Legendrian submanifold and $\Delta$ transversely intersects $\p B$ along its boundary?}
\medskip

If $n=2$ then such a  Lagrangian  disc  does not exist. Indeed, it is easy to check that the existence of such a Lagrangian disc implies that  the Thurston-Bennequin invariant $\tb(\p\Delta)$of the Legendrian knot $\p\Delta\subset S^3$  is equal to $+1$. On the other hand, the knot $\p\Delta$ is sliced, i.e its $4$-dimensional genus is equal to $0$.
But then according to Lee Rudolph's   slice Bennequin inequality \cite{Rudolph} we should have $\tb(\p\Delta)\leq -1$, which is a contradiction.
 \medskip

 As far as we know no such  Lagrangian discs have been previously constructed in higher dimensions either. We prove in this paper that if $n>2$ such discs exist in abundance.  In particular, we prove
\begin{theorem} \label{thm:caps} Let $L$ be a smooth manifold  of dimension $n>2$ with non-empty  boundary such that  its complexified tangent bundle $T(L)\otimes\C$ is trivial.
Then   there exists  
   an exact Lagrangian embedding  $f:(L,\p L)\to( \R^{2n}\setminus \Int B,\p B)$ with
$f(\p \Delta)\subset\p B$ such that $f(\p \Delta)\subset\p B$ is a Legendrian submanifold and  $f$  transverse  to  $\p B$ along the boundary $\p L$.
\end{theorem}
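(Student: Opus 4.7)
The plan is to follow the standard two-stage strategy for constructing Lagrangian embeddings: first produce a formal (immersed) solution with the prescribed boundary behavior, then eliminate its self-intersections using an $h$-principle mechanism made available by the concave Legendrian boundary.

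First, I would build an exact Lagrangian immersion with the required boundary condition. The hypothesis that $T(L)\otimes\C$ is trivial is exactly the formal obstruction for a Lagrangian immersion of $L$ into $\R^{2n}$, so the Gromov--Lees $h$-principle produces one. A parametric refinement applied along $\p L$, together with a cut-off, lets me arrange that the immersion lands in $\R^{2n}\setminus\Int B$, meets $\p B$ transversely along $\p L$, maps $\p L$ to a Legendrian in $(\p B,\xi_{\std})$, and is concave along $\p B$ in the sense that the interior of the Lagrangian lies on the outward side of the Liouville vector field. Exactness can then be arranged by subtracting a closed one-form pulled back from $L$, since the primitive obstruction lives on $L$ rather than on the ambient space. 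The resulting immersion $f_0$ has only finitely many transverse double points in its interior.

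Second, I would exploit the hypothesis $n>2$ by appealing to Murphy's $h$-principle for loose Legendrian embeddings, which is available for $(n-1)$-dimensional Legendrians in $\p B$ once $n-1\geq 2$. I deform $\p L$ through Legendrians inside $\p B$ so as to introduce a loose chart, and extend the deformation to a Hamiltonian isotopy on a collar of $\p B$ inside $\R^{2n}\setminus \Int B$. This extension does not alter the interior double-point set of $f_0$, and produces a new exact Lagrangian immersion whose boundary is loose.

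Third --- and this is both the main geometric content and the principal obstacle --- I would cancel the interior double points one at a time by a local modification near the loose chart. For each double point $p$ the idea is to cut out a small embedded disc through $p$ and re-attach it by a ``Lagrangian cap'': an exact Lagrangian piece in $\R^{2n}\setminus\Int B$ that tunnels from the loose chart on $\p L$, passes through $p$ so as to geometrically cancel it, and returns to the loose chart, being spliced in via a local Legendrian modification that the loose chart can absorb. The existence of such a model cap is precisely where the paper's title and the dimensional restriction enter: the loose chart provides the formal flexibility, but the geometric cap itself is a new construction that depends essentially on $n>2$. Indeed, the slice--Bennequin obstruction recalled in the introduction shows that no such cap can exist when $n=2$, so the model must be genuinely higher-dimensional. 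Once one such cap is constructed, a relative grafting argument, applied in disjoint neighborhoods of the finitely many double points, replaces $f_0$ by an embedding with the same concave Legendrian boundary, proving the theorem.
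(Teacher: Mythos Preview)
Your first two stages are essentially what the paper does: build an exact Lagrangian immersion into $\R^{2n}\setminus\Int B$ with Legendrian boundary via Gromov's $h$-principles, and arrange the boundary to be loose. The paper then invokes its Theorem~\ref{thm:main}, whose proof is the real work.

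The gap is in your third stage. You propose to ``cancel the interior double points one at a time'' by splicing in a model cap through each double point. This cannot work as stated, for two reasons that are exactly the content of the paper's Sections~\ref{sec:balanced} and~\ref{sec:proofs}. First, a single transverse self-intersection point carries a sign, and no local Lagrangian modification can remove it in isolation; one can only cancel a \emph{Whitney pair} of points with opposite intersection indices. (In your situation $I(f_0)=0$ since $f_0$ is regularly homotopic to a smooth embedding, so such a pairing exists, but you never invoke it.) Second, and more seriously, the usual Whitney trick has a symplectic obstruction: the Whitney $2$-disc must have zero symplectic area, which forces the two points of the pair to have \emph{equal action} $a_{\SI}(p,f)=a_{\SI}(q,f)$. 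Generically this fails, and no local cap at a single point can fix it.

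The paper's mechanism is therefore quite different from your sketch. The loose boundary is not used to cancel points directly; it is used first in Proposition~\ref{prop:balancing} to \emph{balance} the immersion, i.e.\ to create, via the model of Lemma~\ref{lm:loose-trick-model} applied near the loose chart, new intersection points of prescribed sign and prescribed action so that every original point acquires a partner of opposite sign and equal action. Only after this balancing does the paper run a Lagrangian Whitney trick on each balanced pair (proof of Theorem~\ref{thm:main-imm}), and the loose boundary is used a second time there, to Legendrian-unlink the two attaching spheres that arise when one excises a neighbourhood of the Whitney disc. Your ``tunnel through $p$ and return'' picture does not capture either of these two roles.
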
 
Note that the triviality of the bundle $T(L)\otimes\C$ is a necessary (and according to Gromov's $h$-principle for Lagrangian immersions, \cite{Gr-PDR} sufficient) condition for existence of any Lagrangian {\it immersion }$L\to\C^n$.

In fact, we prove a very general  $h$-principle type result for   Lagrangian embeddings generalizing this claim,   see Theorem \ref{thm:main} below.
As corollaries of this theorem we get
\begin{itemize}
\item an  $h$-principle for Lagrangian embeddings in any symplectic manifold with a unique conical singular point, see Corollary \ref{cor:conic};
  \item  a general $h$-principle for embeddings of flexible Weinstein domains, see Corollary \ref{cor:flexible-embed};
  \item construction of Lagrangian immersions with minimal number of self-intersection points;  this is explored  in a   joint paper  of the authors with T.~Ekholm and I.~Smith, \cite{EkElMuSm}.  
 
  \end{itemize}
   
 Theorem  \ref{thm:main} together with the results from the book
 \cite{CieEli-Stein} yield new examples of rationally convex domains in $\C^n$, which will be discussed elsewhere. The authors are thankful to   Stefan Nemirovski,  whose questions  concerning this circle of questions  motivated   the results of the current paper.

\section{Main Theorem}\label{sec:main-theorem}
\subsubsection*{Loose Legendrian submanifolds}
Let $(Y,\xi)$  be a $(2n-1)$-dimensional contact manifold.
Let us recall that each contact plane $\xi_y$, $y\in Y$, carries a canonical linear symplectic structure defined up to a scaling factor. Thus, there is a well defined class of isotropic and, in particular, Lagrangian  linear subspaces of $\xi_y$.
 Given a $k$-dimensional , $k\leq n-1$, manifold $\Lambda$, an injective  homomorphism $\Phi:T\Lambda\to TY$ covering a map $\phi:\Lambda\to Y$ is called isotropic (or if $k=n-1$ Legendrian) if $\Phi(T\Lambda)\subset\xi$ and  $\Phi(T_x\Lambda)\subset \xi_{\phi(x)}$ is isotropic for each $x\in \Lambda$.
 
Given a $(2n-1)$-dimensional contact manifold $(Y,\xi)$, an  embedding $f:\Lambda\to Y$ is called {\it isotropic} if it is tangent to $\xi$; if  in addition $\dim \Lambda=n-1$ then it is called {\it Legendrian}.
 The differential of an isotropic (resp. Legendrian) embedding is an isotropic (resp. Legendrian) homomorphism. 
 
 Two Legendrian embeddings   $f_0,f_1:\Lambda\to Y$ are called {\it  formally Legendrian isotopic}
if there exists a smooth isotopy $f_t:\Lambda\to Y$ connecting $f_0$ and $ f_1$ and a $2$-parametric family of injective homomorphisms $\Phi_t^s:T\Lambda\to TY$, such that $\Phi_t^0=df_t, \Phi_0^s=df_0, \Phi_1^s=df_1$ and $\Phi^1_t$ is a  Legendrian homomorphism ($s,t\in[0,1]$).

The results of this paper essentially depend on the theory of {\it loose Legendrian  } embeddings developed in \cite{Murphy-loose}. This is a class of Legendrian embeddings into contact manifolds of dimension $>3$ which satisfy a certain form of an $h$-principle. 
For the purposes of this paper we  will not need a formal definition of   loose Legendrian embeddings, but instead just describe their properties.  

 Let  $\R^{2n-1}_{\std}:=(\R^{2n-1},\xi_\std=\{dz-\sum\limits_1^{n-1}y_idx_i=0\})$ be the standard contact $\R^{2n-1}$, $n>2$, and  $\Lambda_0 \subset \R^{2n-1}_{\std}$ be the Legendrian $\{z=0, y_i=0\}$.  Note that a small neighborhood of any point on a Legendrian  in a contact manifold is  contactomorphic to the pair $(\R^{2n-1}_\std,\Lambda_0)$.   There is another Legendrian $\tilde{\Lambda}$, called the \emph{universal loose Legendrian}, which is equal to $\Lambda_0$ outside of a compact subset, and formally Legendrian isotopic to it. A picture of $\tilde{\Lambda}$ is given in Figure ~\ref{fig:stab}, though we do not use any properties of $\Lambda$ besides those stated above.
       \begin{figure}
 \center{\includegraphics[scale = .8]{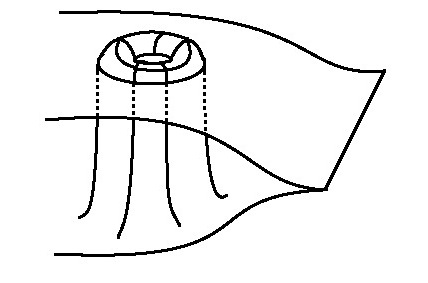}} \caption{The universal loose Legendrian, $\tilde{\Lambda}$. In the terminology of  \cite{Murphy-loose}  and \cite{CieEli-Stein}  $\tilde{\Lambda}$ is 
  the stabilization of $\Lambda_0$ over a  manifold of Euler characteristic $0$.} \label{fig:stab}
\end{figure}
  A {\it connected} Legendrian submanifold $\Lambda\subset Y$ is called 
  \emph{loose},  if there is a contact embedding $(\R^{2n-1}_{\std},\wt\Lambda)\to (Y,\Lambda)$.
 We refer the interested readers to the paper \cite{Murphy-loose} and the book \cite{CieEli-Stein} for  more information. 
 The following proposition summarizes the properties of loose  Legendrian embeddings.
 
\begin{prop}\label{prop:Murphy}
For any contact manifold $(Y,\xi)$ of dimension $2n-1>3$ the set of connected loose Legendrians have the following properties:
\begin{enumerate}
\item  For any  Legendrian embedding $f:\Lambda \to Y$ there is a loose Legendrian embedding  $\wt f:\Lambda \to Y$ which coincides with $f$ outside an arbitrarily small neighborhood of a  point $p\in\Lambda$ and which is formally isotopic to $f$ via a formal Legendrian isotopy supported in this neighborhood. 

\item  Let  $f_0,f_1:\Lambda\to Y$ be two loose Legendrian embeddings of a connected $\Lambda$ which coincide outside a compact set and which are formally Legendrian isotopic via a compactly supported isotopy. Then $f_0,f_1$ are Legendrian isotopic via a compactly supported Legendrian isotopy.

\item Let $f_t:\Lambda\to Y$, $t\in[0,1]$, be a smooth isotopy which begins with a lose Legendrian embedding $f_0$. Then it can be $C^0$-approximated by a Legendrian isotopy $\wt f_t:\Lambda\to Y$, $t\in[0,1],$ beginning with $\wt f_0=f_0$.
 \end{enumerate}
\end{prop}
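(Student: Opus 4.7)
The plan is to reduce each part to a combination of general h-principle techniques and the defining features of the universal loose Legendrian $\wt\Lambda$ from \cite{Murphy-loose}.

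For part (i), the construction is local. Pick any point $p\in\Lambda$ and, using the standard contact neighborhood theorem for Legendrians recalled in the excerpt, choose a contactomorphism from a neighborhood of $p$ in $(Y,\Lambda)$ onto a neighborhood of the origin in $(\R^{2n-1}_{\std},\Lambda_0)$. Inside a small ball, replace $\Lambda_0$ by the compactly supported model $\wt\Lambda$; since $\wt\Lambda=\Lambda_0$ off a compact set, this produces a new Legendrian $\wt f:\Lambda\to Y$ which agrees with $f$ outside the prescribed neighborhood. By construction the modified embedding admits the loose chart $(\R^{2n-1}_{\std},\wt\Lambda)\hookrightarrow(Y,\wt f(\Lambda))$, so it is loose, and the formal Legendrian isotopy from $\Lambda_0$ to $\wt\Lambda$ guaranteed by the definition of $\wt\Lambda$ pulls back to the required compactly supported formal Legendrian isotopy from $f$ to $\wt f$.

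Part (ii) is the main content and the core of the theory. The strategy (following Murphy) is to first approximate the given formal Legendrian isotopy $(f_t,\Phi_t^s)$ by a one-parameter family of \emph{wrinkled} Legendrian embeddings $g_t$ with $g_0=f_0$ and $g_1=f_1$. Wrinkled Legendrians are genuine Legendrian maps except along a controlled codimension-one locus of Legendrian wrinkle singularities, and the existence of such an approximation is an h-principle analogous to the Eliashberg-Mishachev theory of wrinkled embeddings. The crucial second step uses the loose chart: each Legendrian wrinkle of $g_t$, when isotoped into the loose chart of $f_0$, can be resolved into an honest Legendrian embedding by a compactly supported Legendrian isotopy, since the stabilization $\wt\Lambda$ provides enough local flexibility to absorb wrinkles. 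Resolving each wrinkle in turn produces the desired compactly supported genuine Legendrian isotopy from $f_0$ to $f_1$.

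Finally, part (iii) follows by combining (ii) with soft formal steps. Given the smooth isotopy $f_t$, one first extends it to a formal Legendrian isotopy $(f_t,\Phi_t^s)$ starting at $f_0$: since formal Legendrian data forms an open subset of the space of bundle homomorphisms $T\Lambda\to TY$ covering $f_t$, a relative homotopy-theoretic argument (the parametric h-principle for formal isotropic homomorphisms, analogous to Gromov's h-principle for Lagrangian immersions) produces such $\Phi_t^s$. A parametric strengthening of the argument of (ii), exploiting that the loose chart of $f_0$ persists along the smooth isotopy, then upgrades this data to a genuine Legendrian isotopy $\wt f_t$ with $\wt f_0=f_0$ and $\wt f_t$ staying $C^0$-close to $f_t$ for all $t$. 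The principal obstacle throughout is step (ii); parts (i) and (iii) are essentially formal consequences once the universal loose model and the wrinkle-resolution mechanism of \cite{Murphy-loose} are in hand.
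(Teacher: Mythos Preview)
Your sketch is broadly correct and in fact more detailed than what the paper itself provides: the paper does not prove Proposition~\ref{prop:Murphy} at all but simply records it as a summary of known results, attributing (i) to the Legendrian stabilization construction from \cite{Eli-Stein} and (ii)--(iii) to \cite{Murphy-loose}. Your outline of (i) as stabilization via the local model $(\R^{2n-1}_{\std},\wt\Lambda)$ matches the paper's one-sentence description exactly, and your sketch of (ii) via wrinkled Legendrian embeddings and wrinkle resolution in the loose chart is the correct summary of Murphy's argument in \cite{Murphy-loose}; (iii) is indeed the parametric version.

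One small caution on (iii): the formal Legendrian homotopy extending a smooth isotopy $f_t$ does not come from ``openness'' of formal data (being a Legendrian homomorphism is a closed, not open, condition); rather it comes from the homotopy lifting property for the fibration of Lagrangian subspaces of $\xi$ over $Y$. Also, the loose chart of $f_0$ need not literally persist along an arbitrary smooth isotopy $f_t$; the actual argument in \cite{Murphy-loose} works by the parametric wrinkling h-principle rather than by tracking a fixed chart. These are minor wording issues in what is meant to be a sketch; the overall strategy you describe is the right one, and it is what the paper is citing rather than reproving.
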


 Statement (i) is the   {\it Legendrian stabilization} construction which replaces a small neighborhood of a point on a Legendrian submanifold  by the model 
$(\R^{2n-1}_{\std},\wt\Lambda)$. It  was first     described for $n>2$ in \cite{Eli-Stein}. The main part of Proposition \ref{prop:Murphy}, parts (ii) and (iii),
  are proven in \cite{Murphy-loose}. Notice that (ii)   implies   that  if a Legendrian is already loose that any further stabilizations do not change its Legendrian isotopy class. 

\subsubsection*{Symplectic manifolds with negative Liouville ends}
  
 Throughout the paper we use the terms {\it closed  submanifold}  and {\it properly embedded submanifold} as synonyms, meaning   a submanifold which is a closed subset, but not necessarily a closed manifold itself. 
 
   Let $L$ be an $n$-dimensional smooth manifold. 
 A {\it negative end} structure on $L$ is a choice of 
\begin{itemize}
\item a codimension $1$   submanifold  $\Lambda \subset L$  which divides $L$ into two parts: $L=L_-\cup L_+$, $L_-\cap L_+=\Lambda$, and
\item   a non-vanishing vector field $S$ on $\Op L_-\subset L$ which is outward  transverse to the boundary $\Lambda=\p L_-$, and such that the negative flow $S^{-t}:L_-\to L_-$ is defined for all $t$ and all its  trajectories intersect $\Lambda$.
\end{itemize}
In other words, there is a canonical diffeomorphism  $L_-\to (-\infty,0]\times\Lambda$  which is defined by sending the ray $(-\infty,0]\times x$, $x\in\Lambda$, onto the trajectory of $-S$ originated at $x\in\Lambda$.
 
Alternatively, the  negative end structure can be viewed as a {\it  negative completion} of the manifold $L_+$ with boundary $\Lambda$:
$$L=L_+\mathop{\cup}\limits_{0\times \Lambda\ni (0,x)  \sim x\in \Lambda } (-\infty,0]\times \Lambda.$$ 
 
 Negative end  structures which differ  by a  choice of the cross-section $\Lambda$ 
 transversely intersecting all  the negative trajectories of $L$ will be  viewed as  equivalent.

Let $(X,\omega)$ be  a $2n$-dimensional {\it symplectic} manifold. 
A properly embedded co-oriented hypersurface $Y\subset X$ is called a  {\it  contact slice}
if it  divides $X$ into two domains $X=X_-\cup X_+$, $X_-\cap X_+=Y$, and
 there exists a Liouville vector field $Z$ in a neighborhood of $Y$ which is transverse to $Y$, defines its given co-orientation and points into $X_+$. Such hypersurfaces are also called {\it symplectically convex}  \cite{EliGro-convex}, or of {\it contact type}  \cite{Weinstein}. 
 
 If the Liouville field extends to  $X_-$  as a non-vanishing Liouville field   such  that the negative flow $Z^{-t}$ is defined for all $t\geq 0$ and all its trajectories in $X_-$ intersect $Y$ then $X_-$  with a choice of such  $Z$  is called a {\it negative Liouville end} structure
 of the symplectic manifold $(X,\om)$.
 
 The restriction $\alpha$ of the Liouville  form $\lambda=i(Z)\om$ to $Y$ is a contact form on $Y$ 
  and   the  diffeomorphism  $(-\infty,0]\times Y\to X_-$ which sends each  ray $(-\infty,0]\times x$ onto the trajectory of $-Z$ originated at $x\in\Lambda$ is a   Liouville  isomorphism between the negative symplectization $((-\infty,0]\times Y,d(t\alpha))$ of the contact manifold 
  $(Y,\{\alpha=0\})$ and $(X_-,\lambda)$. 
   Hence     alternatively the  negative Liouville  end structure can be viewed as a {\it  negative completion} of the manifold $X_+$ with the negative contact boundary $Y$, i.e. as an
 attaching   the negative symplectization 
 $((-\infty,0]\times Y,d(t\alpha))$ of the contact manifold 
  $(Y,\{\alpha=0\})$ to  $X_+$ along $Y$.

A negative Liouville end   structure which differs by another choice of the cross-section $Y$  
 transversely intersecting all  negative trajectories of $X$   will be  viewed as an equivalent one. Note  that the holonomy along trajectories of $X$  provides a  contactomorphism between any  two transverse sections.   Any such transverse section will be called a {\it contact slice}.

If the symplectic form $\omega$ is exact and the Liouville form $\lambda$ is extended as a Liouville form, still denoted by $\lambda$, to the  whole manifold  $X$, then we will call $(X,\lambda)$ a {\it Liouville manifold  
with a negative  end}.

Let $L$ be an  $n$-dimensional manifold  with a negative   end, and $X $   a symplectic $2n$-manifold  with a negative  Liouville   end. A  proper Lagrangian immersion  $f:L\to X$ is  called {\it cylindrical at $-\infty$} if it maps the negative end  $L_-$ of $L$ into a negative end  $X_-$ of $X$, the restriction $f|_{L_-}$ is an embedding, and the  differential $df|_{TL_-}$ sends the vector field $S$ to $Z$.  Composing the restriction of $f$ to  a transverse slice $\Lambda$  with the projection of the negative Liouville end of $X$ to $Y$ along trajectories of $Z$ we get a Legendrian embedding $f_{-\infty}:\Lambda\to Y$, which  will  be called the {\it asymptotic negative  boundary} of the Lagrangian immersion $f$.
    
\subsubsection*{The action class} 
 
 Given a  proper  Lagrangian  immersion $f:L\to X$, we   consider its mapping cylinder $C_f=L\times [0,1]\mathop{\cup}\limits_{ (x,1)\sim f(x)}X$, which is homotopy equivalent to $X$,  and denote respectively by $H^2 (X,f)$ and $H^2_\infty(X,f)$ the $2$-dimensional   cohomology groups $H^2(C_f,L\times 0)$ and $H^2_\infty(C_f,L\times 0):=\lim\limits_{K\subset C_f}^{\longrightarrow}H^2(C_f\setminus K,(L\times 0)\setminus K)$, where the direct limit is taken over all compact subsets $K\subset C_f$. We denote by $r_\infty$ the restriction homomorphism $r_\infty: H^2(X,f)\to H^2_\infty(X,f)$.
 If $f$ is an embedding then  $H^2  (X,f)$  and  $H^2_\infty(X,f)$ are canonically isomorphic  to
 $H^2 (X,f(L))$ and  $H^2_\infty (X,f(L)):=\lim\limits_{K\subset X}^{\longrightarrow}H^2(X\setminus K,f(L) \setminus K)$, respectively.
We define the 
  {\it  relative action class }   
      $A(f)\in  H^2 (X,f)$
of  a  proper Lagrangian  immersion  $f:L\to X$     as the class defined by  the  closed 2-form which is equal  $\omega$ on $X$ and to $0$ on $L\times 0$. We say that $f$ is {\it weakly  exact } if
 $A(f)=0$.  The  {\it  relative action class at infinity } 
$A_\infty(f)\in  H^2_\infty (X,f)$ is defined as $A_\infty(f):=r_\infty(A_\infty)$. We note we have  $A_\infty(f)=A_\infty(g)$ if Lagrangian immersions $f,g$ coincide outside a compact set.

  Consider next a compactly supported Lagrangian regular homotopy, $f_t\colon L\to X$, $0\le t\le 1$, and write $F\colon L\times[0,1]\to X$, 
  for $F(x,t)=f_t(x)$. Let $\alpha$ denote the 1-form on $L\times[0,1]$
   defined by the equation $\alpha:=\iota_{\p/\p t}(F^*\omega)$, where 
   $t$ is the coordinate on the second factor of $L\times[0,1]$. 
   Then the restrictions $\alpha_t:=\alpha|_{L\times \{t\}}$ are closed for 
   all $t\in[0,1]$. We call the Lagrangian regular homotopy $f_t$ a 
   \emph{Hamiltonian regular homotopy} if the cohomology class
    $[\alpha_t]\in H^1(L)$ is independent of $t$. It is straightforward to 
    verify that for a Hamiltonian regular homotopy  $f_t$ the action class 
     $A(f_t)$ remains constant. Note, however, that the converse is not  necessarily true.
  
  If $X$ is a Liouville manifold, then we  define the {\it absolute
 action class} $a(f)\in H^1 (L)$ as the class of the closed form
 $f^*\lambda$, and call a Lagrangian  immersion $f$   {\it exact} if $a(f)=0$. Note that in that case
we have   $\delta(a(f))=A(f)$, where
    $\delta$ is  the boundary homomorphism $H^1 (L)\to H^2 (X,f)$ from the exact sequence of the pair $(C_f,L\times 0)$.  We will also use the notation
     $$H^1_\infty(L):=\mathop{\lim\limits^{\longrightarrow}_{K\subset L}}
     \limits_{ K \;\hbox{is compact} }H_1(L\setminus K),\; r_\infty:H^1(L)\to H^1_\infty(L),\; a_\infty(f)=r_\infty(a(f)).$$
     If the the immersion $f$ is cylindrical at $-\infty$ then the class $a_ \infty(f)$ vanishes on $L_-$. 
 
 \subsubsection*{Statement of   main theorems}
 We say that a symplectic manifold $X$ has    infinite Gromov width if  an arbitrarily large ball in $\R^{2n}_\st$ admits a  symplectic embedding     into $X$.
 For instance, a complete Liouville manifold  have infinite Gromov width. 
\begin{theorem}\label{thm:main}
Let $f:L\to X$ be a cylindrical at $-\infty$ proper embedding of an $n$-dimensional, $n\geq 3$,  connected manifold $L$, such that its asymptotic negative Legendrian boundary has a component which is loose in the complement of the other components. Suppose that there exists a compactly supported homotopy of injective homomorphisms $\Psi_t:TL\to TX$ covering $f$ and such that $\Psi_0=df$and $\Psi_1$ is a Lagrangian homomorphism. 
 If $n=3$   assume, in addition, that the manifold $X\setminus f(L)$ has infinite Gromov width.
 Then given a cohomology class $A\in H^2(X,f(L))$ with $r_\infty(A)=A_\infty(f)$,  
 there exists a compactly supported isotopy $f_t:L\to X$ such that 
 \begin{itemize}
 \item $f_0=f$;
 \item $f_1$ is Lagrangian;
 \item $A(f_1)=A $ and
 \item $df_1:TL\to TX$ is homotopic to $\Phi_1$ through Lagrangian homomorphisms.
  \end{itemize}
   If $X$ is a Liouville manifold with a negative contact end, then one can in addition prescribe any  value $a\in H^1(L) $ to the absolute action class $a(f_1)$  provided that $r_\infty(a)=a_\infty$, and in particular make the Lagrangian embedding $f_1$ exact.
\end{theorem}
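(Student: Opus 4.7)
The plan is a two-stage argument: first reduce $f$ to a Lagrangian \emph{immersion} by the classical h-principle, then use the loose Legendrian negative end as a reservoir of flexibility to cancel all self-intersections and prescribe the action class.

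For stage one I invoke Gromov's h-principle for Lagrangian immersions in the form relative to the cylindrical negative end: the homotopy $\Psi_t$ of injective bundle maps, together with the fact that $df=\Psi_0$ is already Lagrangian near $-\infty$ (since $f$ is cylindrical), produces a compactly supported regular homotopy $g_t\colon L\to X$ with $g_0=f$, $g_1$ a proper Lagrangian immersion coinciding with $f$ outside a compact set and having finitely many transverse double points, and $dg_1$ homotopic to $\Psi_1$ through Lagrangian bundle maps. In particular the asymptotic negative boundary is unchanged, so the loose Legendrian component $\Lambda$ of $f_{-\infty}$ is still present.

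For stage two I remove the double points of $g_1$ one at a time by pushing each into the loose Legendrian end. Fix a double point $p=g_1(a)=g_1(b)$ and connect $a,b$ inside $L$ by disjoint arcs to points on the component of $L_-$ asymptotic to $\Lambda$; their image sweeps out a Whitney-type disc in $X$ whose boundary runs out along $g_1(L)$ and down to $\Lambda$. Using Proposition \ref{prop:Murphy}(iii) I isotope $\Lambda$ Legendrianly to bring the two footpoints together inside a Darboux chart modeled on the loose pair $(\R^{2n-1}_\std,\wt\Lambda)$. A Lagrangian surgery inside this chart, followed by an ambient symplectic isotopy that retracts the resulting handle back up the cylindrical end, converts the double point at $p$ into a compactly supported modification of the Legendrian boundary inside its loose zone. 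By Proposition \ref{prop:Murphy}(ii) this modification is absorbed: the Legendrian end can be restored to its original form by a compactly supported Legendrian isotopy, which extends to a compactly supported Lagrangian isotopy of the whole embedding. When $n=3$ the Whitney discs meet $X\setminus g_1(L)$ in complex dimension $1$, so generic embedded disjointness of the successive discs requires extra room, supplied by the infinite Gromov width hypothesis. Iterating over the finitely many double points produces a Lagrangian embedding $\wt f_1$ compactly-supported-isotopic to $f$ with $d\wt f_1$ Lagrangian-homotopic to $\Psi_1$.

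The hardest step is precisely this double-point removal: the surgery must be realized by an ambient isotopy rather than an abstract Lagrangian cobordism, the formal Lagrangian data must not shift in an uncontrolled way, and the Legendrian end must be restored without memory of the accumulated stabilizations. Looseness is exactly the property that renders these stabilizations invisible up to compactly supported Legendrian isotopy, so the parametric uniqueness in Proposition \ref{prop:Murphy} supplies the ambient isotopy that undoes them. Finally, to hit the prescribed action class, observe that $A-A(\wt f_1)\in H^2(X,\wt f_1(L))$ lies in $\Ker r_\infty$ by hypothesis and so is represented by a compactly supported closed two-form vanishing on $\wt f_1(L)$. Such a class can be realized as the period shift of a compactly supported (non-Hamiltonian) Lagrangian regular homotopy, after which one final round of double-point removal as above yields an embedding $f_1$ with $A(f_1)=A$. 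In the Liouville case the same argument applied to primitives realizes any prescribed $a\in H^1(L)$ with $r_\infty(a)=a_\infty(f)$, and choosing $a=0$ gives exactness.
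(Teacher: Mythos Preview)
Stage one is fine and close to the paper's opening move. Stage two is a genuine gap: what you describe for removing a single double point is not an actual construction. Two arcs from the preimages $a,b$ out to $\Lambda$ do not ``sweep out a Whitney-type disc'', and ``Lagrangian surgery inside this chart followed by an ambient symplectic isotopy that retracts the resulting handle'' does not name an operation that deletes a transverse Lagrangian double point while keeping $L$ diffeomorphic to itself --- Polterovich surgery at a double point changes the topology of $L$. The paper does not remove double points individually. It first \emph{balances} the immersion (Proposition~\ref{prop:balancing}): a Hamiltonian regular homotopy groups the self-intersections into Whitney pairs $(p,q)$ of opposite sign and \emph{equal action} $a_\SI(p)=a_\SI(q)$. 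Each balanced pair is then cancelled by a modified Whitney trick (proof of Theorem~\ref{thm:main-imm}) that encloses $p,q$ in a Weinstein ball built from index-$0$, $1$, and $2$ handles; the index-$1$ attachment of Lemma~\ref{lm:surgery} along the arcs $\og^j$ joining $p$ to $q$ requires $\int_{\og^j}\lambda=0$, which is exactly the equal-action condition you never address. Looseness is used not to absorb a lone double point but to Legendrian-unlink the two spheres $\Lambda^0,\Lambda^1$ appearing on the boundary of this ball, so that the two intersecting Lagrangian discs inside can be replaced by disjoint ones.

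Two further corrections. The $n=3$ hypothesis enters only in the balancing step: both the number of double points of action $<\eps$ produced by Lemma~\ref{lm:small-action} and the number of contact boxes $Q_{12\eps}$ one can pack into a fixed contact slice grow like $\eps^{-3}$, so extra ambient width is needed (proof of Proposition~\ref{prop:balancing} for $n=3$); it is unrelated to disjointness of Whitney discs. And the paper fixes the action class at the immersion stage, then localizes all self-intersections into disjoint balls connected by tubes to the loose chart and applies Theorem~\ref{thm:main-imm} only inside the resulting contractible piece $\wt X$ with $\wt L\cong\R^n$, where cohomology vanishes and the final isotopy back to $f$ comes from the $h$-cobordism theorem. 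Adjusting the action class at the end and rerunning the removal, as you propose, would work only once the removal step itself is made precise.
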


We do not know whether the   infinite width condition when $n=3$ is really necessary, or it is just a result of  deficiency of our method.
  
  Suppose we are given a smooth proper immersion   $f: L^n  \to X^{2n}  $   with only transverse double points and  which is an embedding outside of a compact subset. If $L$  is connected, $L$ is orientable and $X$ is oriented and $n$ is even, we define the \emph{relative self-intersection index} of $f$, denoted $I(f)$, to be the signed count of intersection points, where the sign of an intersection $f(p^0) = f(p^1)$ is $+1$ or $-1$ depending on whether the orientation defined by $(df_{p^0}(L), df_{p^1}(L))$ agrees or disagrees with the orientation on $X$. Because $n$ is even, this sign does not depend on the ordering $(p^0, p^1)$; if $n$ is odd or $L$ is non-orientable we instead define $I(f)$ as an element of $\Z_2$. If $X$ is simply connected a theorem of Whitney ~\cite{Whitney} implies that $f$ is regularly homotopic with compact support to an embedding if and only if $I(f) = 0$.

  Theorem  \ref{thm:main} will be deduced in Section \ref{sec:proofs} from the following
  \begin{theorem} \label{thm:main-imm}
  Let $(X,\lambda)$ be a simply connected  Liouville manifold with a negative  end $X_-$, and
 $f:L\to X$  a cylindrical at $-\infty$ exact self-transverse Lagrangian  immersion with finitely many self intersections.    Suppose that    $I(f)=0$, and  the asymptotic negative boundary $\Lambda$ of $f$ has a component which is loose in the complement of the others.  If $n=3$   suppose, in addition, that $X\setminus f(L)$ has infinite Gromov width.
 Then there exists a compactly supported Hamiltonian regular homotopy  $ f_t$, connecting $f_0=f$ with an embedding $f_1$.  
   \end{theorem}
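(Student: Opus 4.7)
The plan is to inductively eliminate the self-intersections of $f$ by a sequence of compactly supported Hamiltonian regular homotopies, using the loose component $\Lambda_1$ of the asymptotic negative boundary as a reservoir of flexibility. The hypothesis $I(f)=0$, combined with simple-connectedness of $X$, allows one to group the self-intersections into canceling pairs (arbitrarily in the $\Z_2$ case, and by opposite local signs when $n$ is even and $L$ is oriented); since exactness and transverse self-transversality are preserved under $C^\infty$-small perturbations, and Hamiltonian regular homotopies compose, it suffices to cancel one such pair $\{p,q\}$ at a time while leaving the rest essentially untouched.

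Fix preimages $p^0,p^1,q^0,q^1 \in L$ of the chosen pair, and choose embedded arcs $\alpha,\beta\subset L$ joining $p^0$ to $q^0$ and $p^1$ to $q^1$ respectively, both avoiding the other self-intersections. Since $X$ is simply connected and $2n\geq 6$, the loop $f(\alpha)\cup f(\beta)$ bounds an embedded Whitney disk $D\subset X$ whose interior is disjoint from $f(L)$. In the purely smooth category $D$ supports a compactly supported regular homotopy of $f$ canceling the pair; the difficulty is that $D$ need not be a Lagrangian Whitney disk in the sense of Polterovich, so this smooth trick is not directly Hamiltonian. To bypass this, I would draw in $\Lambda_1$: fix a contact Darboux chart $\UU$ around a point of $\Lambda_1$ in which $\Lambda_1\cap\UU$ is the universal loose model $\wt\Lambda$, choose an embedded arc $\eta\subset L$ from $p^0$ to a point of $\Lambda_1\cap\UU$ extended cylindrically into the negative end, and use a neighborhood of $f(\eta)$ to drag the double point $p$ along $f(L)$ into the loose chart. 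Performing the analogous drag for $q$ trades the pair $\{p,q\}$ for a compactly supported modification of $\Lambda_1$ inside $\UU$. Because the formal Lagrangian class of $f$ is unchanged throughout, the modified Legendrian is formally Legendrian isotopic to $\Lambda_1$ rel complement; Proposition~\ref{prop:Murphy}(ii) realizes this formal isotopy by an actual compactly supported Legendrian isotopy, which lifts to a compactly supported Hamiltonian isotopy of $X$ and completes the cancelation.

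The principal obstacle is the rigorous implementation of the ``drag to infinity'' step: one must construct a Hamiltonian regular homotopy pushing a transverse double point into the loose chart without creating new self-intersections and while keeping the action class $A(f)$ constant. This is the technical heart of the loose h-principle argument and relies on Proposition~\ref{prop:Murphy}(iii) applied to the Legendrian family obtained by restricting the evolving Lagrangian to a transverse contact slice near $\Lambda_1$, together with an exact primitive computation (using simple-connectedness and exactness of $f$) to certify that the deformation is Hamiltonian and not merely Lagrangian. The borderline case $n=3$ is delicate because pseudoholomorphic disks of Maslov index zero can bubble and obstruct the loose h-principle; the infinite Gromov width hypothesis on $X\setminus f(L)$ is used to find a large embedded symplectic ball in which to displace the Lagrangians via a Hamiltonian isotopy, restoring the transversality and compactness needed for the argument to proceed exactly as in the higher-dimensional case.
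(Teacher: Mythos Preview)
Your proposal misses the central obstruction and hence the main idea of the proof. Pairing self-intersections by sign alone is not enough: each double point $p$ carries an \emph{action} $a_\SI(p,f)=\int_\gamma f^*\lambda$ (well-defined by exactness and simple-connectedness), and there is no Hamiltonian regular homotopy eliminating a pair $\{p,q\}$ unless their actions match. Concretely, in the paper's Whitney trick one must attach index-$1$ Weinstein handles along the arcs $\overline\gamma^0,\overline\gamma^1$ joining the two double points, and Lemma~\ref{lm:surgery} requires $\int_{\overline\gamma^j}\lambda=0$; this forces $a_\SI(p,f)=a_\SI(q,f)$. Your ``drag to the loose chart'' step does nothing to address this: pushing a double point around by a Hamiltonian isotopy preserves its action, so a pair with mismatched actions remains uncancelable no matter where you move it. The paper devotes all of Section~\ref{sec:balanced} (Proposition~\ref{prop:balancing}) to first deforming $f$ so that the double points come in \emph{action-balanced} Whitney pairs, via an intricate construction that creates many auxiliary intersections of small action near the loose end and then matches them up.

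Your account of the $n=3$ hypothesis is also incorrect. The infinite Gromov width condition has nothing to do with pseudoholomorphic bubbling or the loose $h$-principle; it enters only in the balancing step. After shrinking actions (Lemma~\ref{lm:small-action}) the number of double points grows like $\eps^{-3}$, while the number of disjoint Darboux boxes $Q_{12\eps}$ one can pack into a fixed contact slice grows like $\eps^{-n}$. For $n>3$ the latter dominates, but for $n=3$ one needs extra room, supplied by Lemma~\ref{lm:inserting-QC} using arbitrarily large symplectic balls in $X\setminus f(L)$.

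Finally, once balancing is achieved, the paper's cancellation of a single balanced pair is rather different from your sketch: one builds a Weinstein ball $\wt C$ containing the pair whose boundary meets $f(L)$ in two Legendrian unknots $\Lambda^0,\Lambda^1$ that are smoothly but not Legendrianly unlinked; one then connect-sums $\Lambda^0$ with the loose component $\Lambda$ via a $1$-handle along $\overline\eta$, making it loose, and uses Proposition~\ref{prop:Murphy} and Lemma~\ref{lm:Leg-Lag} to realize the unlinking by a Lagrangian concordance. Your ``drag the double point into the loose chart'' is too vague to compare---it is not clear what Hamiltonian homotopy moves a transverse intersection of two Lagrangian branches into a prescribed region without creating new intersections---and in any case cannot succeed without first equalizing the actions.
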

   \begin{remark*} If $X$ is not simply connected the statement remains true if the self-intersection index $I(f)$ is understood as an element of the group ring of $\pi_1(X)$.
   \end{remark*}
  \section{Weinstein recollections and other preliminaries}\label{sec:Weinstein-recollect}
       \subsubsection*{Weinstein cobordisms}
We  define below a slightly more general notion of a  Weinstein cobordism than is usually done (comp. \cite{CieEli-Stein}), by allowing cobordisms between non-compact manifolds. Let $W$ be a $2n$-dimensional smooth manifold with boundary. We allow $W$, as well as its boundary components to be non-compact.
Suppose that the boundary $\p W$ is presented as the union of two disjoint subsets $\p_\pm W$ which are open and closed in $\p W$.
A {\it Weinstein cobordism } structure on $W$  is a triple $(\om, Z,\phi)$, where $\om$ is a symplectic form on $W$, $Z$ is a Liouville vector field, and $\phi:W\to
 [m,M]$ a Morse function with finitely many critical points, such that 
 \begin{itemize}
 \item  $\p_-W=\{\phi=m\}$ and $\p_+W=\{\phi=M\}$ are regular level sets;
 \item  the vector field $Z$ is gradient like for $\phi$, see \cite{CieEli-Stein}, Section 9.3;
 \item outside a compact subset of $W$ every trajectory of $Z$ intersects both $\p_-W$ and $\p_+W$.
 \end{itemize}
  
  The function $\phi$ is called a {\it Lyapunov function} for $Z$.
 The Liouville form $\lambda=i(Z)\om$ induces contact structure  on  all regular levels of the function  $\phi$.
 All  $Z$-stable manifolds of critical points of the function $\phi$ are isotropic for $\om$  and, in particular, indices  of all critical points are $\leq n=\frac{\dim W}2$.  
    A Weinstein cobordism  $(W,\om,X,\phi)$ is called {\it subcritical} if indices of all critical points are $<n$.

 \subsubsection*{Extension of Weinstein structure}
 The following lemma is the standard handle attaching statement in the Weinstein category (see \cite{Weinstein} and   \cite{CieEli-Stein}). We provide a proof here because we need it in a slightly different  than it is presented   in   \cite{Weinstein} and   \cite{CieEli-Stein}.

 \begin{lemma}\label{lm:surgery}
  Let $(X,\lambda)$ be a Liouville manifold with boundary,   $Z$   the Liouville field corresponding to $\lambda$ (i.e. $\iota_Z\om=\lambda$ where $\om=d\lambda$) and   $Y\subset \p X$ a  (union of)   boundary component(s)  of $X$ such that $Z$ is inward transverse to $Y$.
Let $(\Delta,\p \Delta)\subset (X,Y)$ be a $k$-dimensional ($k\leq n$)  isotropic disc, which is tangent to   $Z$ near $\p\Delta$.
  If $k=1$   suppose, in addition, that  $\int\limits_\Delta \lambda=0$, and if $k<n$   suppose, in addition, that $\Delta$ is extended to (a germ of) a Lagrangian submanifold   $(L,\p L)\subset  (X,Y)$ which is   also tangent to $Z$ near $\p L$.  Then  for   any neighborhoods  $U \supset\Delta$ and $\Omega\supset Y$   there exists  a Weinstein  cobordism $(W,\om, \wt Z,\phi)$ with the following properties :
     \begin{itemize}
  \item $Y\cup\Delta\subset W\subset\Omega\cup U$;
  \item $\p_-W=Y$;
\item the function $\phi$ has a unique  
critical point $p$  of index $k$ at the center of the disc $\Delta$;
\item the disc $\Delta$ is contained in the $\wt Z$-stable manifold of the point $p$;
  \item the field $\wt Z|_{L\cap W}$ is tangent to $L$;
  \item the Liouville form $\wt \lambda=i(\wt Z)\om$ can be written as $\lambda+dH$ for a function $H$ compactly supported in $U\setminus Y$.
    \end{itemize}
 \end{lemma}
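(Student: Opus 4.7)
The plan is to reduce to a standard local model around $\Delta$, install the classical Weinstein $k$-handle in that model, and glue back, using the tangency of $Z$ near $\p\Delta$ (and near $\p L$ when $k<n$) to make the matching smooth.

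First I would produce a relative normal form for $\Delta$ (and $L$, when $k<n$). Since $\Delta$ is isotropic in $(X,\om)$, tangent to $Z$ near $\p\Delta$, and meets $Y$ transversely along an isotropic sphere of the contact manifold $(Y,\lambda|_Y)$, the isotropic neighborhood theorem with boundary identifies a neighborhood $U_0\subset U$ of $\Delta$ in $X$ with a neighborhood of the standard core disc $\{p=0,\ q_{k+1}=\cdots=q_n=0\}$ in $(\R^{2n},\om_\st)$, sending $Y\cap U_0$ to the standard negative boundary of the future handle and carrying $\lambda|_{U_0}$ to a primitive that already agrees with the standard $\lambda_\st$ near $\p\Delta$. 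For $k<n$ one invokes the relative Lagrangian neighborhood theorem simultaneously to send the germ of $L$ to the coordinate Lagrangian $\{p=0\}$, in which the tangency of $Z$ near $\p L$ is the compatibility needed for the two theorems to be applied together. The hypothesis $\int_\Delta\lambda=0$ in the case $k=1$ is precisely the cohomological obstruction that has to vanish in order to match primitives along the arc $\Delta$ up to an exact correction supported away from $\p\Delta$; for $k\ge 2$ the disc $\Delta$ is simply connected and the matching is automatic.

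In the model I would then take the standard Weinstein field
$$Z_\st=\sum_{i=1}^k\bigl(-q_i\partial_{q_i}+2p_i\partial_{p_i}\bigr)+\tfrac12\sum_{j>k}\bigl(q_j\partial_{q_j}+p_j\partial_{p_j}\bigr),$$
which is gradient-like for $\phi_\st=-\tfrac12\sum_{i\le k}q_i^2+\sum_{i\le k}p_i^2+\tfrac14\sum_{j>k}(q_j^2+p_j^2)$, has a unique index-$k$ critical point at the origin whose stable manifold contains the core disc, and is manifestly tangent to $\{p=0\}$. On $U_0$ we now have two Liouville primitives $\lambda$ and $\lambda_\st=\iota_{Z_\st}\om_\st$ for the same $\om_\st$, agreeing near $\p\Delta$ (and along the germ of $L$ near $\p L$, when $k<n$). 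Their difference is closed on $U_0$; since $U_0$ retracts onto $\Delta$ (onto $\Delta\cup L$ when $k<n$), where the two forms have been arranged to coincide, the difference equals $dH$ for a function $H$ that vanishes near $Y$ and can be cut off by a bump to have compact support in $U\setminus Y$. Replacing $\lambda$ by $\wt\lambda:=\lambda+dH$ yields the required Liouville field $\wt Z$: it equals $Z$ outside a compact subset of $U\setminus Y$, equals $Z_\st$ on a neighborhood of the core in which no spurious critical points are introduced, is tangent to $L$, and has $\Delta$ in its stable manifold. Finally I carve out $W$ as the union of a thin collar obtained by flowing $Y$ along $\wt Z$ with the sublevel set $\{\phi_\st\le M\}$ of the model handle, and glue $\phi_\st$ to the Liouville time coordinate on the collar by a smooth\,max to produce the Lyapunov function $\phi$; the Lyapunov condition on non-compact trajectories is automatic because outside a compact subset of $W$ the field $\wt Z$ coincides with the original collar field.

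The main obstacle is the first step: producing the relative Darboux--Weinstein model simultaneously compatible with the contact slice $Y$, with the $Z$-tangency of $\Delta$ near $\p\Delta$, and with the extended Lagrangian germ $L$ carrying its own $Z$-tangency near $\p L$, while using the $k=1$ action hypothesis to align the two primitives along the arc. Once this model is set up, the cutoff and the handle construction are routine, following the pattern of \cite{Weinstein} and \cite{CieEli-Stein}.
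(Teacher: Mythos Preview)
Your proposal is correct and follows essentially the same approach as the paper: reduce to a local model for $\Delta\subset L$ via the Liouville/Lagrangian neighborhood theorem (the paper cites \cite{Weinstein} and Proposition~6.6 of \cite{CieEli-Stein}), install the standard index-$k$ Weinstein field, match the two primitives using the $k=1$ action hypothesis and a cutoff supported in $U\setminus Y$, and then construct the cobordism with its Lyapunov function. The only notable difference is that where you sketch the Lyapunov function by a smooth max of $\phi_\st$ with the collar parameter, the paper appeals to Corollary~9.21 of \cite{CieEli-Stein} to produce a function $\wh Q$ target-equivalent to the model quadratic near the core and compatible with the interpolated Liouville field in the cutoff region.
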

 \begin{proof}
 Let us set $L=\Delta$ if $k=n$. For a general case we can assume that $L=\Delta\times \R^{n-k}$. Let $\om_\st$ denote the symplectic form on $T^*(L)=T^*L\times T^*\R^k =\Delta^k\times\R^k\times\R^{n-k}\times\R^{n-k}$ given
 by the formula
 $$\om_\st=\sum\limits_1^k dp_i\wedge dq_i+\sum\limits_1^{n-k} d u_j\wedge dv_j$$ with respect to the coordinates
 $(q,p,v,u)\in \Delta^k\times\R^k\times\R^{n-k}\times\R^{n-k}$ which correspond to this splitting.
  Denote by $\lambda_k$ the Liouville form
  $\sum\limits_1^k(2 p_i dq_i+q_i dp_i)+\frac12\sum\limits_1^{n-k} (v_idu_j-u_jdv_j)$, $d\lambda_k=\om_\st$.
  Note that the    Liouville field $$Z_k:=
  \sum\limits_1^k\left(-q_i \frac{\p}{\p q_i}+2 p_i \frac{\p}{\p p_i}\right)+\frac12\sum\limits_1^{n-k} \left(v_i\frac{\p}{\p v_i}+u_j\frac{\p}{\p u_j}\right)$$ corresponding to the form $\lambda_k$ is gradient like for the quadratic function
 $$Q:=\sum\limits_1^k (p_i^2-q_i^2)+\sum\limits_i^{n-k}(u_j^2+v_j^2),$$
 tangent to $L$, and     the disc $\Delta$  serves as the $Z_k$-stable manifold of its critical point.
 
Using  the     normal form for the Liouville form  $\lambda$ near $\p L$ (see \cite{Weinstein}, and also \cite{CieEli-Stein}, Proposition 6.6) and the Weinstein  symplectic normal form along the Lagrangian $L$   we can find, possibly decreasing the neighborhoods $\Omega$ and $U$,
a
symplectomorphism $\Phi:U\to U'$, where $U'$  is a neighborhood of $\Delta$ in $ T^*L$,    such that
 \begin{itemize}
\item $\Phi(L\cap U )=L \cap U'$, 
 $\Phi(\Delta\cap U)=\Delta \cap U'$;
 \item $\Phi^*\om_\st=\om $;
\item $\Phi^*\lambda_k =\lambda$ on $ \Omega\cap U$;
 \item $\Phi (Y\cap U)=\{Q =-1\}\cap U'.$
 \end{itemize}
 Thus the closed, and hence exact $1$-form  $\Phi_*\lambda-\lambda_k$ vanishes on $\Omega':=\Phi(\Omega\cap U)$, and therefore, using the condition $\int\limits_\Delta\lambda=0$  when $k=1$, we  can conclude that $\lambda_k=\Phi_*\lambda  +  d H$ for a function  $ H:G\to\R$ vanishing on $\Omega'\supset\p\Delta$. Let $\theta:U'\to[0,1]$ be a $C^\infty$-cut-off function equal to $0$ outside a neighborhood $U_1'\supset\Delta$, $U'_1\Subset U'$, and equal to $1$ on a smaller neighborhood   $U_2'\supset\Delta$, $U'_2\Subset U'_1$. Denote $\wh H:=\theta H$. Then the form
 $\wh\lambda:=\Phi_*\lambda+d\wh H$   coincides with $\Phi^*\lambda$ on $\Omega'\cup (U'\setminus U_1')$, and  equal to $\lambda_k$ on $U_2'$.
 
  Then, according to Corollary 9.21 from \cite{CieEli-Stein}, for any sufficiently small $\eps>0$ and a neighborhood $U_3'\supset\Delta$, $U_3'\Subset U_2'$, there exists a Morse function $\wh Q: U'\to\R$ such that
  \begin{itemize}
  \item $\wh Q$ coincides with $Q$ on $\{Q\leq-1\}\cup(\{Q\leq-1+\eps\}\setminus U_2'$;
  \item $\wh Q$ and $Q$ are target equivalent over $U_3'$, i.e. 
  there  exists a diffeomorphism $\sigma:\R\to\R$  such that  over $U_3'$ we have 
    $\wh Q =\sigma\circ Q$;
  \item $-1+\eps$ is a regular value of $\wh Q$ and  $\{\wh Q\leq-1+\eps\}\subset \Omega'\cup U_2'$;
  \item   inside
  $\wh W:=\{-1\leq \wh Q\leq -1+\eps\}\subset U'$ the function $\wh Q$ has a unique critical point.
  \end{itemize}
      
     Denote       $\wt Q:=\wh Q\circ\Phi:U\to\R$. Let us extend  the function $\wt Q$ to the whole manifold  $X$ in such a way that
     \begin{itemize}
     \item $\{\wt Q=-1\}\setminus U=Y\setminus U$, 
   \item  $\{-1\leq \wt Q\leq -1+\eps\}\setminus U\subset \Omega\setminus U$, 
   \item  the function $\wt Q|_{X\setminus U}$ has no critical values in $[-1,-1+\eps]$ and
   \item  the  Liouville vector field $Z$ is gradient like for $\wh Q$ on
   $\{-1\leq \wt Q\leq -1+\eps\}\setminus U$.
   \end{itemize}
   Let us define $W:=\{-1\leq\wt Q\leq -1+\eps\}\subset X$,
   $$\wt\lambda=\begin{cases}
   \Phi^*\wh\lambda=\lambda+d\wh H\circ \Phi,& \hbox{on}\; U,\cr
   \lambda,&\hbox{on}\; X\setminus U.
   \end{cases}
   $$
   Let $\wt Z$ be the Liouville field $\omega$-dual to the  Liouville form $\wt\lambda$
   Then the Weinstein cobordism  $(W,\om, \wt Z,\phi:=\wh H\circ\Phi)$ has the required properties.
    \end{proof}
    
 We will also need the following simple
\begin{lemma}\label{lm:surgery-index0} Let $(X,\lambda)$ be a Liouville manifold and $f:L\to X$ a Lagrangian immersion. Let $p\in X$ be a transverse self-intersection point. Then there exists a symplectic  embedding $h:B\to  X$ of a sufficiently small ball in $\R^{2n}_\st$   into $X$ such that $h(0)=p$ and $h^{-1}(f(L))=B\cap(\{x=0\}\cup\{y=0\})$. \end{lemma}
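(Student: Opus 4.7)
The plan is to construct $h$ by composing a Darboux chart around $p$ with two explicit symplectomorphisms that successively straighten the two Lagrangian branches meeting at $p$. Let $L_0, L_1$ denote the two local sheets of $f(L)$ passing through $p$; their tangent spaces at $p$ are two transverse Lagrangian subspaces of $(T_pX,\om_p)$. Since the linear symplectic group acts transitively on ordered pairs of transverse Lagrangian subspaces of a symplectic vector space, I would first combine Darboux's theorem with a linear change of coordinates to produce a chart $\psi\colon U\to\R^{2n}$ sending $p$ to $0$, pulling $\om_\std=\sum dx_i\wedge dy_i$ back to $\om$, and sending $T_pL_0, T_pL_1$ onto the coordinate Lagrangians $\{y=0\}$, $\{x=0\}$ respectively.

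In this chart $\psi(L_0)$ is locally the graph of a closed $1$-form over the $x$-plane, hence of the form $\{y=df_0(x)\}$ for some smooth $f_0$ with $f_0(0)=0$, and because its tangent space at $0$ coincides with $\{y=0\}$ the Hessian $\Hess(f_0)(0)$ must vanish. Symmetrically, $\psi(L_1)=\{x=dg_1(y)\}$ with $g_1(0)=0$ and $\Hess(g_1)(0)=0$. I would next apply the symplectomorphism $\phi_0(x,y):=(x,\,y-df_0(x))$, which is symplectic by symmetry of $\Hess(f_0)$ and sends $\psi(L_0)$ to $\{y=0\}$; since $df_0(0)=0$ and $\Hess(f_0)(0)=0$, the differential $d\phi_0(0)$ is the identity, so $\phi_0$ fixes $\{x=0\}$ pointwise and in particular preserves the tangent space to $\psi(L_1)$ at the origin. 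The new branch $\phi_0(\psi(L_1))$ then has the form $\{x=d\wt g_1(y)\}$ with $d\wt g_1(0)=0$, and the analogous map $\phi_1(x,y):=(x-d\wt g_1(y),\,y)$ sends it to $\{x=0\}$ while fixing $\{y=0\}$ pointwise. Setting $h$ to be the inverse of $\phi_1\circ\phi_0\circ\psi$ and restricting to a sufficiently small ball $B\subset\R^{2n}_\std$ then yields the required symplectic embedding.

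The argument is entirely local. Beyond the classical linear symplectic fact that any two ordered transverse Lagrangian subspaces are equivalent to the standard pair, it involves only elementary generating-function manipulations whose closedness and symplectic conditions are automatic from the symmetry of the Hessians. I do not anticipate any substantive obstacle; the only real content lies in the normalization of the tangent pair in the first step.
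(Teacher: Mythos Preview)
Your argument is correct and rests on the same shear trick as the paper, just organized a bit differently. The paper's proof is shorter: it invokes the Weinstein neighborhood theorem to flatten one branch to $\{x=0\}$ in one stroke, so the second branch is automatically a graph $\{y=dg(x)\}$ with $dg(0)=0$, and then the single shear $(x,y)\mapsto(x,y-dg(x))$ straightens it while fixing $\{x=0\}$ pointwise. You instead use only Darboux plus a linear normalization of the tangent pair, at the price of needing two shears rather than one; this is slightly longer but more elementary, since it avoids the Weinstein neighborhood theorem entirely. One minor remark: the vanishing of $\Hess(f_0)(0)$ is not actually needed---$\phi_0$ already fixes $\{x=0\}$ pointwise just from $df_0(0)=0$, and $\phi_0(\psi(L_1))$ is transverse to $\{y=0\}$ simply because $\phi_0$ is a diffeomorphism carrying $\psi(L_0)$ onto $\{y=0\}$---so your linear step could in fact be weakened to aligning only $T_pL_0$.
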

\begin{proof} By the Weinstein neighborhood theorem, there exist coordinates in a symplectic ball near $p$ so that $f(L)$ is given by $\{x = 0\} \cup \{y = dg(x) \}$ for some function $g:\R^n \to \R$ so that $dg(0) = 0$ (here we use natural coordinates on $T^*\R^n$). By transversaility the critical point of $g$ at $0$ is non-degenerate. Composing with the symplectomorphism $(x, y) \mapsto (x, y - dg(x))$ gives the desired coordinates.
\end{proof}
\subsubsection*{Cancellation of critical points in a Weinstein cobordism}

The following   proposition  concerning cancellations of critical points 
in a Weinstein cobordism 
 is proven in \cite{CieEli-Stein}, see there
  Proposition 12.22. 

\begin{prop}\label{prop:cancellation}
Let $(W,\om,Z_0,\phi_0)$ be a Weinstein cobordism with exactly two
critical points $p,q$ of index $k$ and $k-1$, respectively, which are
connected by a unique $Z$-trajectory along
which the stable and unstable manifolds intersect transversely. 
Let $\Delta$ be   the closure of the stable
manifold of the critical point $p$. Then there exists a
Weinstein cobordism structure $(\om,Z_1,\phi_1)$ with the following properties:
\begin{enumerate}
\item   $(Z_1,\phi_1)=(Z_0,\phi_0)$ near $\p W$
and outside a neighborhood of $\Delta$; 
\item  $\phi_1$ has no
critical points. 
\end{enumerate}
\end{prop}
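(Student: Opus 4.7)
The plan is to adapt the classical Smale--Morse cancellation theorem to the Weinstein setting, performing the modification inside a small neighborhood of $\Delta$ and keeping the symplectic form $\omega$ unchanged throughout. Since $p$ and $q$ are connected by a single transverse trajectory and since $\Delta$ contains this trajectory together with both critical points, all modifications can be confined to an arbitrarily small open neighborhood $N$ of $\Delta$; outside $N$ the pair $(Z_0,\phi_0)$ will be left unchanged, automatically handling condition (i).

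First I would put the Weinstein structure on $N$ into a standard normal form. Using the isotropic neighborhood theorem (in the spirit of Lemma~\ref{lm:surgery}) and the Weinstein Lagrangian neighborhood theorem applied to the stable manifold of $p$ together with the stable and unstable manifolds of $q$, one obtains Darboux-type coordinates in which $\phi_0$ is a standard Morse function with a $(k\!-\!1,k)$ pair of quadratic critical points, $Z_0$ is the model radial Liouville field, and $\Delta$ is a linear isotropic disc. This reduction uses the transverse intersection of $W^s(p)$ with $W^u(q)$ along the single connecting trajectory.

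Second, in the standard model, I would invoke the smooth cancellation theorem to produce a Morse function $\phi_1$ with no critical points inside $N$, coinciding with $\phi_0$ outside a slightly smaller neighborhood. Next I must produce a Liouville field $Z_1$ that is simultaneously (a) $\omega$-Liouville, (b) gradient-like for $\phi_1$, and (c) equal to $Z_0$ outside $N$. Since $\omega$ is untouched, the difference $Z_1-Z_0$ is $\omega$-dual to a closed $1$-form $\beta$, and in the explicit linear model one can write $\beta=dH$ for a compactly supported Hamiltonian $H$ and verify directly that $Z_1:=Z_0+X_H$ can be chosen so that $d\phi_1(Z_1)>0$ everywhere on $N$.

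The main obstacle is matching Step~2 with Step~3: one must check that the classical Morse cancellation, which is quite flexible, can be carried out in such a way that a compatible Liouville field persists. The decisive flexibility is that the gradient-like condition $d\phi(Z)>0$ is an open condition on pairs $(\phi,Z)$, and that the Liouville condition $\iota_Z\omega=\lambda$ only determines $Z$ up to the $\omega$-dual of a closed $1$-form, giving enough room to absorb the change in $\phi$ via a compactly supported Hamiltonian perturbation. Once these pieces fit together in the model chart, extending by $(Z_0,\phi_0)$ outside $N$ yields the required Weinstein cobordism structure $(\omega,Z_1,\phi_1)$.
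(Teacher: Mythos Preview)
The paper does not actually prove this proposition; it simply quotes it from \cite{CieEli-Stein}, Proposition~12.22, so there is no proof in the paper to compare against.  Your task was therefore to supply a self-contained argument, and the outline you give is the natural one.  However, there is a genuine gap at exactly the point you yourself flag as ``the main obstacle.''

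Your Step~3 asserts that one can choose a compactly supported $H$ so that $Z_1=Z_0+X_H$ satisfies $d\phi_1(Z_1)>0$ throughout $N$, and you justify this by saying the gradient-like condition is open and the Liouville field is only determined up to a Hamiltonian vector field.  Neither observation suffices.  Openness is irrelevant because passing from $\phi_0$ to $\phi_1$ is not a small perturbation: you are destroying two critical points, so near $p$ and $q$ the pair $(\phi_1,Z_0)$ is very far from gradient-like (indeed $Z_0$ vanishes there while $d\phi_1$ does not).  The freedom $Z_0\mapsto Z_0+X_H$ is heavily constrained: $X_H$ is symplectically dual to $dH$, and arranging $d\phi_1(X_H)>0$ at the former critical points while keeping $H$ compactly supported and maintaining $d\phi_1(Z_0+X_H)>0$ along the entire modified region is a global problem, not a pointwise one.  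You claim this can be ``verified directly in the explicit linear model,'' but you have not specified the model, the function $\phi_1$, or the Hamiltonian $H$; this sentence is precisely where the whole content of the Weinstein cancellation theorem is hidden.

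A secondary issue is your Step~1.  There is no off-the-shelf normal form for a Weinstein structure on a neighborhood of the \emph{entire} disc $\Delta$ (containing both critical points and the connecting trajectory) of the kind you invoke; Lemma~\ref{lm:surgery} and the Weinstein neighborhood theorem give local models near a single critical point or along a single isotropic submanifold, not a simultaneous linearization at two critical points of different indices joined by a flow line.  The actual argument in \cite{CieEli-Stein} does not proceed via such a normal form; it constructs the cancelling Weinstein homotopy by hand, carefully controlling the Liouville field and the Lyapunov function together through an explicit two-parameter family (see Sections~12.4--12.5 there).  Your sketch would need either that machinery or an equivalent explicit construction to close the gap.
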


\subsubsection*{From Legendrian isotopy to Lagrangian concordance}
The following Lemma  about Lagrangian realization of a Legendrain isotopy is proven in \cite{ELIGRO-findim}, see there Lemma 4.2.5.
 \begin{lemma}\label{lm:Leg-Lag}
Let $f_t:\Lambda\to (Y, \xi=\{\alpha=0\}) $, $t\in[0,1]$, be a Legendrian isotopy connecting $f_0,f_1$. Let us extend it to $t\in\R$ as independent of $t$ for $t\notin[0,1]$. Then   there exists  a Lagrangian embedding
 $$F:\R\times\Lambda\to  \R\times Y, d(e^s\alpha)),$$  of the
 form  $F (t,x)=(\wt f_t(x),h(t,x))$  such that
 \begin{itemize}
 \item $F(t,x)=(f_1(x),t)$ and $F(x,-t)=f_0(x)$ for  $t>C$, for a sufficiently large constant $C$;
 \item $\wt f_t (x)\;$  $C^\infty$-approximate $f_t(x)$.
 \end{itemize}
 \end{lemma}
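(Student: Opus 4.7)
The plan is to construct the Lagrangian embedding $F$ explicitly as a trace of the Legendrian isotopy in the symplectization $(\R\times Y, d(e^s\alpha))$. The first step is to extend the Legendrian isotopy $f_t$ to a compactly supported ambient contact isotopy $\psi_t\colon Y\to Y$ with $\psi_0=\Id$ and $\psi_t\circ f_0=f_t$ (this is the standard Legendrian isotopy extension theorem). Let $H_t\colon Y\to\R$ denote the corresponding contact Hamiltonian, so that along the isotopy
$$
\alpha(\dot f_t(x)) \;=\; H_t(f_t(x)).
$$

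Next I would look for $F$ in the form
$$
F(t,x) \;=\; \bigl(h(t,x),\, \wt f_t(x)\bigr)\;\in\;\R\times Y,
$$
where $\wt f_t$ is a $C^\infty$-small Legendrian perturbation of the extended isotopy (equal to $f_0$ for $t\le -C$ and to $f_1$ for $t\ge C$, and $C^\infty$-close to $f_t$ on $[0,1]$), and $h\colon\R\times\Lambda\to\R$ is a function to be determined. Using that $\wt f_t^{\,*}\alpha=0$, a direct computation gives
$$
F^*(e^s\alpha) \;=\; e^{h(t,x)}\,\alpha\bigl(\dot{\wt f}_t(x)\bigr)\,dt.
$$
The Lagrangian condition $d(F^*(e^s\alpha))=0$ then reduces to the first-order relation
$$
h_{x_i}\cdot\alpha(\dot{\wt f}_t) \;+\; \partial_{x_i}\alpha(\dot{\wt f}_t) \;=\; 0,
$$
which admits the explicit solution $h(t,x)=-\log\bigl|\alpha(\dot{\wt f}_t(x))\bigr|+g(t)$ on the set where $\alpha(\dot{\wt f}_t)$ is nowhere zero, for any smooth function $g(t)$.

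The main obstacle—and the heart of the proof—is arranging $\wt f_t$ so that $\alpha(\dot{\wt f}_t(x))$ is non-vanishing and of constant sign on the transition region $t\in(0,1)$, and so that the resulting $h$ glues smoothly to cylindrical ends $h(t,x)=t+\const$ where $\alpha(\dot{\wt f}_t)$ necessarily vanishes. This is done by exploiting the flexibility of Legendrian isotopies: composing $f_t$ with a small time-dependent Reeb shift (which preserves the Legendrian condition since the Reeb flow preserves $\alpha$) to push the function $\alpha(\dot{\wt f}_t)$ into the positive range in the interior, and coupling this with a carefully chosen monotone cut-off $\tau(t)$ whose flatness at the endpoints is calibrated to the rate of decay of $\alpha(\dot{\wt f}_{\tau(t)})$ so that the formula for $h$ extends smoothly across $t=0$ and $t=1$. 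Once this is in place, the map $F$ is Lagrangian by the explicit formula, cylindrical at both ends by construction, and an embedding because the graph map $(t,x)\mapsto(t,\wt f_t(x))$ is injective (so the $Y$-component already separates points at different isotopy times, and $h$ needs only be adjusted to handle the generically isolated Legendrian self-coincidences). The $C^\infty$-approximation claim of the lemma then records precisely the smallness of the Reeb drift used to achieve the non-vanishing.
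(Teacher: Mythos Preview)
The paper does not actually prove this lemma: it is quoted from \cite{ELIGRO-findim}, Lemma~4.2.5, with no argument given. So there is no ``paper's proof'' to compare against; let me simply assess your sketch.

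Your setup and reduction are correct: with $\wt f_t$ Legendrian for every $t$ one has $F^*(e^s\alpha)=e^{h}\,\alpha(\dot{\wt f}_t)\,dt$, and closedness forces $e^{h}\alpha(\dot{\wt f}_t)$ to be a function of $t$ alone, whence the formula $h=-\log|\alpha(\dot{\wt f}_t)|+g(t)$ on the set where $\alpha(\dot{\wt f}_t)\ne 0$. The gap is in the claim that a small Reeb drift lets you push $\alpha(\dot{\wt f}_t)$ into the positive range while keeping the prescribed ends. Writing $\wt f_t=\phi_R^{\,\epsilon(t)}\!\circ f_{\tau(t)}$ with $\epsilon(\pm\infty)=0$, $\tau(-\infty)=0$, $\tau(+\infty)=1$, one computes
\[
\alpha(\dot{\wt f}_t(x))=\epsilon'(t)+\tau'(t)\,a\bigl(\tau(t),x\bigr),\qquad a(s,x):=\alpha(\dot f_s(x)),
\]
and therefore $\int_{\R}\alpha(\dot{\wt f}_t(x))\,dt=\int_0^1 a(s,x)\,ds$ for every $x$. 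The right-hand side is determined by the original isotopy and can be $\le 0$ for some $x$ (for instance whenever the isotopy is a loop $f_0=f_1$). In that case strict positivity of $\alpha(\dot{\wt f}_t(\cdot))$ on the support is impossible no matter how $\epsilon$ and $\tau$ are chosen, so your mechanism fails. Even when all these integrals are positive, matching $h=-\log\alpha(\dot{\wt f}_t)+g(t)$ smoothly to $h=t$ across the boundary of the support forces $\alpha(\dot{\wt f}_t(x))/e^{g(t)-t}\to 1$ independently of $x$, which again fails unless $a(s,\cdot)$ happens to be constant near $s=0,1$.

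The standard remedy---and this is what the cited argument does---is to \emph{drop} the requirement that each $\wt f_t$ be Legendrian. Allowing a $C^\infty$-small non-Legendrian perturbation introduces a spatial piece $e^{h}\,\wt f_t^{\,*}\alpha$ into $F^*(e^s\alpha)$; the closedness condition then becomes an underdetermined first-order system for the pair $(h,\wt f_t)$ which one solves by fixing $h$ (after a sufficiently slow reparametrization one may take $h(t,x)=t$) and correcting $\wt f_t$ inside a Weinstein tube $J^1\Lambda$. With $h(t,x)=t$ the map $F$ is then automatically an embedding, so the separate injectivity discussion is unnecessary.
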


\section{Action-balanced Lagrangian immersions}\label{sec:balanced}
 
Suppose we are given an exact proper Lagrangian immersion $f:L\to X$  of an orientable manifold $L$
into a simply connected Liouville manifold $(X,\lambda)$ with  finitely many transverse self-intersection points.  For each 
 self-intersection point $p\in X$ we denote by $p^0,p^1\in L$ its pre-images in $L$. 
The integral $a_\SI(p,f)=\int\limits_\gamma f^*\lambda$, where $\gamma:[0,1]\to L$ is any path connecting the points  $\gamma(0)= p^0$ and $\gamma(1)= p^1$, will be called the {\it action} of the self-intersection point $p$. Of course, the sign of the action depends on the ordering  of the pre-images $p^0$ and $p^1$. We will fix this ambiguity by requiring that $a_\SI(p,f) > 0$ (by a generic perturbation of $f$ we can assume there are no points $p$ with $a_\SI(p, f) = 0$).

  A pair of self-intersection points $(p,q)$ is called a {\it balanced  Whitney pair}  
   if  $a_\SI(p,f)=a_\SI(q,f)$ and  the intersection indices  of  $df(T_{p^0}L)$ with $df(T_{p^1}L)$  and  of $df(T_{q^0}L)$  with  $df(T_{q^1}L)$  have opposite signs. In the case where $L$ is non-orientable we only require that $p$ and $q$ have the same action. A Lagrangian immersion $f$ is called {\it  balanced} 
if the set of its self-intersection points can be presented as the union of disjoint balanced Whiney pairs.

 The goal of this section is the following
  \begin{prop}\label{prop:balancing} Let $(X,\lambda)$ be a simply connected  Liouville manifold  with a negative  end and      $f:L\to X$   a proper exact and cylindrical at $-\infty$ Lagrangian immersion with finitely many  transverse double points.  If $n=3$ suppose, in addition, that $X\setminus f(L)$ has infinite Gromov width.
        Then there exists an exact cylindrical at $-\infty$ Lagrangian regular homotopy $f_t:L\to X $, $t\in[0,1]$, which is  compactly supported away from the negative end, and such that
   $ f_0=f$ and      $f_1$ is balanced. \\
   If the asymptotic negative boundary of $f$ has a component which is loose in the complement of the other components and $I(f) = 0$ then the Lagrangian  regular homotopy $f_t$ can be made fixed at $-\infty$.
  \end{prop}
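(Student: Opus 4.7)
The plan is to inductively pair the self-intersection points of $f$ into topological pairs and then equalize the actions within each pair by a sequence of local exact Lagrangian regular homotopies. After a small generic exact perturbation we may assume the self-intersection points $p_1,\dots,p_N$ of $f$ have pairwise distinct positive actions $a_\SI(p_i,f)$. In the orientable case the existence of a balanced $f_1$ requires $I(f)=0$ (a regular homotopy invariant), and the self-intersections then split into equal numbers of positive and negative signs; in the non-orientable case there is no sign condition. In either case we fix a topological pairing $(p_i,q_i)$ --- with opposite signs when orientable --- and seek to equalize the actions within each pair.

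The technical heart will be the construction of a local \emph{action-shifting move}: given a self-intersection $p$, an embedded arc $\alpha\colon[0,1]\to L$ with $\alpha(1)=p^0$ and $\alpha|_{[0,1)}$ disjoint from the set of preimages of self-intersections, and any $\delta\in\R$, we produce a compactly supported exact Lagrangian regular homotopy of $f$ supported in an arbitrarily small neighborhood of $f(\alpha)\subset X$ that preserves the set of self-intersections and their signs but shifts $a_\SI(p,f)$ by $\delta$. The construction identifies a neighborhood of $f(\alpha)$ with a neighborhood of the zero section in $T^*L$ via the Weinstein tubular neighborhood theorem and applies a Hamiltonian isotopy of $T^*L$ generated by a function concentrated near $p^0$ that shifts the primitive $h$ of $f^*\lambda$ at $p^0$ by $-\delta$. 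Applying this move once per pair with $\delta$ chosen to equalize actions, we obtain a Lagrangian regular homotopy from $f$ to a balanced $f_1$, proving the first assertion.

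For the refined assertion (fixed at $-\infty$) the supports of the moves must lie in $L_+$, so the arcs $\alpha$ must be routed through $L_+$; obstructions coming from the configuration of self-intersections or the cylindrical end will be circumvented by first performing an auxiliary Lagrangian isotopy. The loose Legendrian component supplies exactly this freedom: by Proposition~\ref{prop:Murphy}(iii) a smooth isotopy of the loose component is $C^0$-approximable by a Legendrian isotopy, which via Lemma~\ref{lm:Leg-Lag} lifts to a compactly supported exact Lagrangian isotopy of the cylindrical end of $f$, allowing us to reshape $f$ in $L_+$ so that suitable arcs $\alpha$ become available. The hard part is ensuring the action-shifting move introduces no new self-intersections: for $n\geq 4$ this follows by generic transversality, but in $n=3$ the dimension count is too tight, and the infinite Gromov width hypothesis is used to find a large embedded symplectic ball in $X\setminus f(L)$ through which the moved piece of the Lagrangian can be routed to avoid unwanted intersections.
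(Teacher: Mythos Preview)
Your central tool --- an ``action-shifting move'' that changes $a_\SI(p,f)$ by an arbitrary prescribed $\delta$ via a local exact Lagrangian regular homotopy that keeps the set of self-intersection points unchanged --- does not work as stated, and this is the essential difficulty the proposition addresses. In the model you describe (a Weinstein tube $\{|p|<\rho\}\subset T^*D$ with the second branch identified with a cotangent fiber), deforming the zero section to the graph of $d\psi$ does shift the action by $\psi(0)$, but for the graph to remain in the tube one needs $|d\psi|<\rho$, and since $\psi$ must vanish near $\partial D$ this forces $|\psi(0)|\lesssim\rho\cdot\mathrm{diam}(D)$. The achievable shift is thus bounded by fixed geometric data of the immersion, and there is no reason it dominates the action differences you need to equalize. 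Running the move along a long arc $\alpha$ does not help: the transverse derivative of $\psi$ (needed to make $\psi$ vanish off a thin tube around the arc) produces the same bound. This is precisely why the paper does \emph{not} try to shift actions directly: Lemma~\ref{lm:small-action-model} and Lemma~\ref{lm:small-action} first trade each intersection for a cloud of new intersections all of small action (the ``twirling'' curve of Figure~\ref{fig:twirl}), and only then are the now-small actions matched.

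The paper's strategy is also structurally different from yours in a way that bears on the hypotheses. You try to keep the self-intersection set fixed and adjust actions; the paper instead \emph{adds} new intersection points of prescribed sign and action to balance each existing one, via the model of Lemma~\ref{lm:loose-trick-model} (combined with Lemma~\ref{lm:2points} for positive points and Lemma~\ref{lm:parallel-slices} to set up the local picture in the symplectization). These new intersections are created by a Lagrangian isotopy that changes the Legendrian at $-\infty$, which is why the first assertion needs no assumption on $I(f)$ and why the homotopy is not fixed at $-\infty$ in general. The loose hypothesis is invoked only at the end: when $I(f)=0$ the modified Legendrian is formally isotopic to the original, and Proposition~\ref{prop:Murphy}(ii) upgrades this to a genuine Legendrian isotopy which, via Lemma~\ref{lm:Leg-Lag}, lets one undo the change at $-\infty$. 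The infinite-Gromov-width hypothesis for $n=3$ enters through Lemma~\ref{lm:inserting-QC}: it is used to enlarge the contact slice so that enough disjoint Darboux boxes $Q_{12\eps}$ fit (the packing count in the proof grows like $\eps^{-n}$ versus $\eps^{-3}$ intersections), not to route a moving branch around $f(L)$.
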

 Note that Proposition \ref{prop:balancing} is the only step in the proof of the main results of this paper where one need the infinite Gromov width condition  when $n=3$.

The following two lemmas will be used to  reduce the action of our intersection points in the case where we only have a finite amount of space to work with, for example when $X_+$ is compact. In the case where $X_+$ contains a symplectic ball $B_R$ of arbitrarily large radius, e.g. in the situation of Theorem  \ref{thm:caps}, these lemmas are not needed. 

                  \begin{lemma}\label{lm:small-action-model}
             Consider an annulus $A:=[0,1]\times S^{n-1} $.  Let $x,z$ be coordinates corresponding to the splitting, and  $y,u$ the dual coordinates in the  cotangent bundle
        $ T^*A$, so that the canonical Liouville form $\lambda$ on $T^*A$ is equal to $ydx+udz$.
        Then for any   integer $N>0$  there exists a Lagrangian immersion $\Delta:A\to T^*A$ with the following properties:
          \begin{itemize}
           \item   $\Delta(A)\subset \{ |y|\leq \frac5N, ||u|| \leq\frac5N\}$;
          \item  $\Delta$ coincides with the inclusion of the zero section $j_A:A\hookrightarrow T^*A$ near $\p A$;
          \item there exists a fixed near $\p A$ Lagrangian regular homotopy connecting
          $j_A$ and $\Delta$;
      \item  $ \int\limits_{\zeta}\lambda=1$, where
      $\zeta$ is  the  $\Delta$-image of any  path connecting $S^{n-1}\times 0$ and $S^{n-1}\times 1$ in $A$;
      \item action of any self-intersection point of $\Delta$ is $<\frac1N$;
      \item the number of self-intersection points is $<8N^3$.
                 \end{itemize}   
                 \end{lemma}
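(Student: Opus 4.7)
I will construct $\Delta$ in two main steps: build an explicit one-dimensional zigzag model $\delta : [0,1] \to T^*[0,1]$, then suspend it over $S^{n-1}$ and perturb generically to get transverse double points. First, construct a Lagrangian immersion $\delta = (\phi, \psi) : [0,1] \to T^*[0,1]$ with $\phi(s) = s$ and $\psi(s) = 0$ near $s \in \{0,1\}$, $\phi([0,1]) \subset [0,1]$, $|\psi| \leq 5/N$, and $\int_0^1 \psi \phi' \, ds = 1$. Such $\delta$ arises as the Lagrangian projection of a Legendrian $\widetilde\delta \subset J^1[0,1]$ whose front $z(s)$ monotonically rises from $0$ to $1$; the constraint $|dz/dx| = |\psi| \leq 5/N$ forces the front to zigzag in $x$ roughly $N$ times, each tooth contributing $\sim 1/N$ to the $z$-rise. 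I stagger the $N$ sheets in $y$ within $\{|y|\leq 5/N\}$ with separation $\sim 10/N^2$, and place the turnaround arcs at distinct $x$-coordinates; arranging the geometry so that only consecutively-indexed sheets can cross in the Lagrangian projection bounds every Reeb chord's action by $1/N$, with total count $O(N^2)$.

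Next, suspend: set $\Delta(s, z) := (\phi(s), z, \psi(s), 0) : A \to T^*A$. A direct computation gives $\Delta^*(dy \wedge dx + du \wedge dz) = \psi'(s)\phi'(s)\,ds \wedge ds = 0$, so $\Delta$ is Lagrangian; the image lies in $\{|y| \leq 5/N,\; \|u\| = 0\}$; the boundary conditions on $\phi, \psi$ give $\Delta = j_A$ near $\partial A$; and for any path $\gamma$ in $A$ from $\{0\}\times S^{n-1}$ to $\{1\}\times S^{n-1}$ one has $\int_{\Delta\circ\gamma} \lambda = \int_0^1 \psi \phi' \, ds = 1$, independent of $\gamma$ because $\Delta^*\lambda = \psi\phi'\,ds$ depends only on $s$. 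A fixed-at-boundary Lagrangian regular homotopy from $j_A$ to $\Delta$ is $\Delta_t(s,z) := ((1-t)s + t\phi(s), z, t\psi(s), 0)$, Lagrangian for every $t$ by the same verification.

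Finally, the self-intersections of $\Delta$ form Morse--Bott $(n-1)$-spheres $\{(s_1,z)\sim(s_2,z) : z\in S^{n-1}\}$, one for each self-intersection $(s_1,s_2)$ of $\delta$. Perturb $\Delta$ by a $C^\infty$-small compactly supported Hamiltonian isotopy localized near each such sphere, generated by a Morse function on $S^{n-1}$; each sphere is resolved into transverse double points at the Morse critical points. Using Morse functions with at most $O(N)$ critical points on each sphere yields at most $O(N) \cdot O(N^2) \leq 8N^3$ double points overall, with all new intersection actions remaining $< 1/N$ provided the perturbation is taken sufficiently small.

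\textbf{Main obstacle.} The core technical difficulty is the explicit design of the 1D zigzag in the first step so that \emph{every} Reeb chord has action less than $1/N$. A priori, chords between sheets that are far apart in the front's $z$-coordinate would have action proportional to their $z$-gap, which could be as large as $1$. The strip width $5/N$ is chosen precisely to admit $N$ sheets stacked with $y$-separation $\sim 10/N^2$, and the delicate part is arranging the turnaround arcs in $x$ so that only adjacent sheets cross in the Lagrangian projection, ensuring every chord spans a $z$-gap of at most $1/N$.
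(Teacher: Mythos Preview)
Your architecture---planar immersed curve $\delta:[0,1]\to T^*[0,1]$, suspend over $S^{n-1}$, perturb the Morse--Bott intersection spheres---matches the paper's, but two steps do not go through as written. First, the linear interpolation $\Delta_t(s,z)=((1-t)s+t\phi(s),\,z,\,t\psi(s),\,0)$ is Lagrangian for each $t$ but fails to be an immersion whenever $\psi'(s)=0$ and $(1-t)+t\phi'(s)=0$; on any leftward-moving sheet at constant $y$ one has $\psi'=0$ and $\phi'<0$, so this occurs for some $t\in(0,1)$. What is actually needed is that $\delta$ have rotation number zero relative to the straight segment, which is not automatic for an arbitrary zigzag. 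The paper arranges this by appending to its area-carrying curve $\gamma_1$ a reflected, rescaled copy $\gamma_2$ that cancels the tangent winding.

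Second, and more seriously, your one-dimensional model is under-specified at the crucial point. The curve is confined to a strip of area $10/N$ yet must carry $\int y\,dx=1$, so heavy self-overlap is forced; but with sheets alternating between $y>0$ (right-moving) and $y<0$ (left-moving) as a monotone front requires, consecutively indexed sheets are far apart in $y$, and the turnaround arcs connecting them sweep across all intermediate sheets, producing crossings between non-adjacent sheets whose actions can be of order $1$. I do not see how ``placing the turnaround arcs at distinct $x$-coordinates'' prevents this. The paper's explicit construction sidesteps the issue by using roughly $N^2$ rectangular loops of area $1/N^2$ each, shifted so that loop $j$ meets only loops $j-N,\dots,j+N$; every action is then at most $N\cdot(1/N^2)=1/N$, at the cost of $O(N^3)$ planar crossings. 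Each resulting intersection sphere is perturbed into just two transverse points (any $S^{n-1}$ admits a Morse function with two critical points), yielding the bound $8N^3$. Your use of $O(N)$ Morse critical points per sphere is unnecessary and suggests the planar $O(N^2)$ count was not secure.
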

                 \begin{proof}
                  Consider in $\R^2$  with coordinates $(x,y)$ the   rectangulars $$I_{j,N}=\left\{\frac{j}{5N^2}\leq x \leq\frac{j}{5N^2}+\frac1{5N},0\leq y\leq\frac5{N}\right\}, j=0, \dots (N-1)N.$$
       
      Consider   a path $\gamma$ in $\R^2$ which begins at the origin, travels  counter-clockwise   along the boundary of $I_{0, N}$, then moves along the $x$-axis to the point $(\frac1{5N^2},0)$, travels  counter-clockwise   along the boundary of $J_{1, N}$   etc., and ends
      at the point $(1,0)$.
      Note that $\int\limits_\gamma ydx=\frac{N-1}{N}$.  We also observe that   squares $I_{j,N}$  and $I_{i,N}$ intersect  only  when  $|i-j|\leq N$, and hence for any self-intersection point $p$ of $\gamma$ its action  is bounded by $ N\frac1{N^2}=\frac1N.$ 
      Let  us  $C^\infty$-approximate $\gamma$ by an immersed curve $ \gamma_1$ with transverse self-intersections and which coincides with $\gamma$ near its   end points.  We can arrange that 
      \begin{itemize}
      \item  $\left|\int\limits_{ \gamma_1}ydx-1\right|<\frac2N$;
      \item action of any self-intersection point of $ \gamma_1$ is $<\frac1N$;
      \item the number of self-intersection points is $<2N^3$;
      \item the  curve $ \gamma_1$  is contained in    the rectangular
      $\{0\leq x\leq \frac15,0\leq y\leq\frac5N\}$. 
      \end{itemize}
      
      \begin{figure}
 \center{\includegraphics[scale = .8]{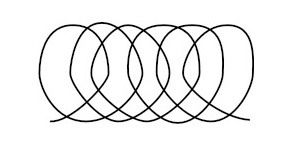}} \caption{The curve $\gamma_1$ when $N=3$.} \label{fig:twirl}
\end{figure}

      See Figure ~\ref{fig:twirl}. The only non-trivial statement is the upper bound on the number of self-intersections. Notice that there are less than $N^2$ loops, and each loop intersects at most $2N$ other loops, in $2$ points each. Thus the number of self intersections, double counted, is less than $4N^3$.
      
      We will assume that $ \gamma_1$ is parameterized by the interval $[0,\frac15]$.
       Let  $r_N$  denote  the affine   map  $(x,y)\mapsto (x+\frac15,-\frac yN)$. We
        define  a path $\gamma_2:[\frac15,\frac25]\to\R^2$ by the formula
        $$\gamma_2(  t)=r_N(\gamma_1(t-\frac15) ).$$    
        Note that the immersion
        $\gamma_{12}:[0,\frac25]\to\R^2$ which coincides with $\gamma_1$ on 
        $[0,\frac15]$ and with $\gamma_2$  on $[ \frac15,\frac25]$ is regularly homotopic to the straight interval  embedding via a homotopy  which is fixed near the end of the interval, and which is inside $\{0\leq x\leq \frac25,-\frac5{N^2} \leq y\leq\frac{5}{N}\}$. We also note that
         $\left|\int\limits_{\gamma_{12}}ydx-1\right|<\frac3N$. See Figure ~\ref{fig:twirl2}.
         
               \begin{figure}
 \center{\includegraphics[height=50mm]{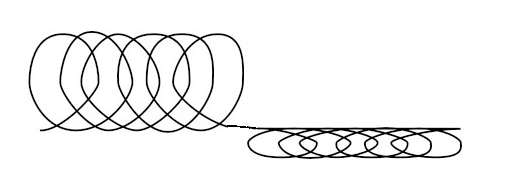}} \caption{The curve $\gamma_{12}$.} \label{fig:twirl2}
\end{figure}

        We further extend $\gamma_{12}$ to an immersion
        $\gamma_{123}:[0,1]\to\R^2$ by extending it to $[\frac25,1]$ as a graph of function  $\theta:[\frac25,1]\to[-\frac5N,\frac5N]$ with   $$\int\limits_{2/5}^1\theta(x)dx=1-\int\limits_{\gamma_{12}}ydx,$$ 
        which implies $\int\limits_{\gamma_{123}}ydx = 1$.
        
       Let $j_{S^{n-1}}$ denote the inclusion $S^{n-1}\to T^* S^{n-1}$ as the $0$-section. Consider a Lagrangian immersion  $\Gamma:A \to T^*A$ given by 
        the formula
        $$\Gamma(x,z)=(   \gamma_{123}(x),j_{S^{2n-1}}(z))\in T^*[0,1]\times T^*S^{n-1}=T^*A.$$
        The Lagrangian immersion $\Gamma$ self-intersects along spheres of the form  $p\times S^{n-1}$ where $p$  is a self-intersection  point of $\wt \gamma$. By a $C^\infty$-perturbation of  $\Gamma$ we can construct  a Lagrangian immersion $\Delta:A\to T^*A$ with transverse self-intersection points which have all the properties listed in Lemma \ref{lm:small-action-model}. Indeed, for each of the $4N^3$ intersection points $p$ of $\gamma_{123}$, the sphere $p \times S^{n-1}$ can be perturbed to have two self-intersections. The other required properties are straightforward from the construction.
                            \end{proof}

  \begin{remark}\label{rm:scaling}
          {\rm
      Given any $a>0$ we get,    by scaling  the Lagrangian immersion $\Delta$ with the dilatation $(y,u)\mapsto (ay,au)$,  a Lagrangian immersion $\Delta_a:A\to T^*A$ which satisfy
       \begin{itemize}
      \item  $ \int\limits_{\zeta}\lambda =a $, where
      $\zeta$ is  the  $\Delta_a$-image of any  path connecting the boundary  $S^{n-1}\times 0$ and $S^{n-1}\times 1$ of $A$;
      \item action of any self-intersection point of $\Delta_a$ is $<\frac aN$;
      \item the number of self-intersection points is $<8N^3$;
      \item   $\Delta_a(A)\subset \{|y|,||u|| \leq\frac{5a}N\}$;
      \item the immersion $\Delta_a$ is regularly homotopic relative its boundary to the inclusion $A\hookrightarrow T^*A$.
            \end{itemize} 
}
          \end{remark}
                         Given a proper Lagrangian immersion $f:L\to X$  with finitely many transverse self-intersection points, we denote the number of self-intersection points by $\SI(f)$. The action of a self-intersection point $p$ of $f$ is denoted by $a_\SI(p,f)$. 
We set  $a_{\SI}(f):=\mathop{\max }
           \limits_p|a_\SI(p,f)|$, where the maximum is taken over all self-intersection points of $f$.    

           \begin{lemma}\label{lm:small-action}
           Let $f_0:L\to (X,\lambda)$ be a proper  exact Lagrangian immersion into a simply connected  Liouville manifold  with finitely many transverse self-intersection points.          Then for any sufficiently large integer  $N>0$ there exists a  fixed at infinity $C^0$-small exact Lagrangian regular homotopy $f_t:L\to X$,  $t\in[0,1]$, such that $f_1$ has transverse   self-intersections, $$a_{\SI}(f_1)\leq
           \frac{a_{\SI}(f)}N,\;\;               \SI(f_1)\leq 9N^3 \SI(f_0) . $$                     
              \end{lemma}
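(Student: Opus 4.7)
The plan is to realize the homotopy locally near each self-intersection point of $f_0$, by inserting a scaled copy of the model of Lemma \ref{lm:small-action-model} via Remark \ref{rm:scaling}. The key geometric choice I would make is to embed the model annulus $A = [0,1] \times S^{n-1}$ into $L$ as a small spherical shell surrounding \emph{one} preimage $p^0$ of each self-intersection point $p$, rather than as a collar along a path from $p^0$ to $p^1$. With this placement, a path between the two preimages crosses the shell exactly once, so the action of $p$ will be shifted by the full prescribed amount rather than cancelled by two oppositely-oriented crossings.

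Concretely, for each self-intersection $p$ of $f_0$ I would pick a coordinate ball $B_p \subset L$ around $p^0$, with the $B_p$ pairwise disjoint and each so small that $f_0|_{B_p}$ is an embedding whose image meets $f_0(L \setminus B_p)$ only at $p$. Inside $B_p$ I would take a spherical shell $A_p = \{r_p \leq |x| \leq 2 r_p\} \cong [0,1]\times S^{n-1}$ around $p^0$, with $r_p$ small enough that a Weinstein tubular neighborhood $V_p \cong T^*A_p$ of $f_0(A_p)$ sits inside a small ball around $p$ in which the two sheets of $f_0(L)$ meet only at $p$; consequently $V_p$ is disjoint from every other part of $f_0(L)$. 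In each $V_p$ I would then apply the Lagrangian regular homotopy of Lemma \ref{lm:small-action-model}, rescaled via Remark \ref{rm:scaling} with $|b_p| = a_\SI(p, f_0)(1 - 1/N)$ and sign chosen to decrease the action at $p$, replacing the zero section by $\Delta_{b_p}$, and then extend by the identity outside $\bigcup V_p$ to a regular homotopy $f_t : L \to X$.

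The verification of the stated bounds will then be largely bookkeeping. Because the arc from $p^0$ (inside the shell) to $p^1$ (outside) crosses $A_p$ once, the action of $p$ becomes $a_\SI(p, f_0)/N$, while all other preimages $q^0, q^1$ for $q \neq p$ avoid every $A_{p'}$ and so keep their original actions. Each model contributes fewer than $8 N^3$ new transverse self-intersections, each of action at most $|b_p|/N < a_\SI(p, f_0)/N$, giving $\SI(f_1) \leq (1 + 8 N^3)\, \SI(f_0) \leq 9 N^3\, \SI(f_0)$ for $N \geq 1$ and $a_\SI(f_1) \leq a_\SI(f_0)/N$. $C^0$-smallness of the homotopy follows from the containment of $\Delta_{b_p}$ in the fiber tube $\{|y|, \|u\| \leq 5 |b_p|/N\}$, whose thickness tends to zero as $N \to \infty$.

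The one step that requires genuine argument, and the main obstacle to the plan, is ensuring that every $f_t$ remains \emph{exact}. The difference $\mu_t := f_t^*\lambda - f_0^*\lambda$ is a closed $1$-form supported in $\bigsqcup A_p$, and its restriction to each $A_p$ is a nonzero class in $H^1(A_p, \partial A_p) \cong \Z$. The point however is that the inner boundary sphere of $A_p$ bounds a ball in $L$ and is therefore null-homologous, so any $1$-cycle $\gamma \subset L$ has vanishing algebraic intersection with it. Decomposing $\gamma \cap A_p$ into arcs, those returning to the same boundary component of $A_p$ contribute zero to $\int \mu_t$ while those crossing from one boundary component to the other pair in cancelling $\pm b_p$ contributions, so $\int_\gamma \mu_t = 0$. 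Hence $[\mu_t] = 0$ in $H^1(L)$ and $f_t$ remains exact throughout. This cohomological requirement is precisely what forces the geometric placement of $A_p$ inside a contractible region around $p^0$.
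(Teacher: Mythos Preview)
Your proposal is correct and follows essentially the same approach as the paper: place a small concentric annulus inside a disc around one preimage of each double point, implant the scaled model $\Delta_a$ of Remark~\ref{rm:scaling} via a Weinstein neighborhood, and count. The only cosmetic differences are that the paper takes $a=a_\SI(p_i,f_0)$ so as to annihilate the original actions rather than reduce them to $a_\SI(p_i,f_0)/N$, and that your explicit verification of exactness (using that the inner sphere bounds in $L$) is left implicit in the paper by the containment $A_i\subset D_i$.
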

   \begin{proof}
  Let $p_1,\dots, p_k$ be the self-intersection points of $f_0$ and 
  $p^0_1, p^1_1,\dots, p^0_k,p^1_k$ their pre-images, $k=\SI(f_0)$.
  Let us recall that we order the pre-images in such a way that $a_{\SI}(f_0)(p_i)>0$, $i=1,\dots, k$. Choose 
  \begin{description}
  \item {-} disjoint  embedded $n$-discs $D_i\ni p^1_i$, $i=1,\dots, k$, which do not contain  any other pre-images  of  double points, and
 \item{-} annuli $A_i\subset D_i$   bounded by two concentric spheres in $D_i$.
 \end{description}
  For a sufficiently large $N>0$ there exist  disjoint symplectic embeddings $h_i$ of the domains 
  $U_i:= \{|y|,||u|| \leq\frac{5a_\SI(p,f_0)}N\}\subset T^*A$ in $X$, $i=1,\dots, k$, such that $h_i^{-1}(f_0(L))=h_i^{-1}(A_i)=A$. Then, using Remark \ref{rm:scaling},
  we find a Lagrangian regular homotopy $f_t$ supported in 
  $\bigcup\limits_1^kh_i(U_i)$ which annihilates the action of points $p_i$, i.e.
   $a_\SI(p_i,f_1)=0$, $i=1,\dots k$, and which creates no more than $8kN^3$ new self-intersection points  of action $< \frac{a_\SI(f_0)}N$. Hence, the total number of self-intersection points of $f_1$ satisfies the inequality
  $ \SI(f_1)< 9\SI(f_0)N^3$.
  
 \end{proof} 

The next lemma is a local model which will allow us to match the action of a given intersection point, during our balancing process. For a positive $C$ we denote by $Q_C$ the parallelepiped
 $$\{|z|\leq C, |x_i|\leq 1,|y_i|\leq C, \;i=1,\dots,  n-1\}$$  in the standard contact space $\R^{2n-1}_\st=(\R^{2n-1},\xi=\{\alpha_\st:=dz-\sum\limits_1^{n-1}y_idx_i=0\})$. 
 Let $SQ_C$ denote the domain 
$[\frac12,1]\times Q_C$ in the symplectization $(0,\infty)\times Q_C $  of $Q_C$ endowed with the Liouville form  $\lambda_0:=s\alpha_\st $. We furthermore denote by $L^t $  the Lagrangian rectangular $\{z=t, y=0; j=1,\dots, n-1\}\cap SQ_C\subset SQ_C$, $ t\in[-C,C]$.

\begin{lemma}\label{lm:loose-trick-model}
 For any positive  $b_0,b_1,\dots, b_k\in (0, \infty) $, $ k\geq 0$, such that $$\frac{C}{4k+4}>b_0>\max(b_1,\dots, b_k),$$ and a sufficiently small $\eps>0$ there 
  exists    a  Lagrangian isotopy which starts   at $L^{-\eps}$, fixed near $1\times Q_C$ and $[\frac12,1]\times\p Q_C$, cylindrical near $\frac12\times Q_C$, and  which     ends at   a Lagrangian submanifold 
$\wt L^{-\eps}$ with the following properties:
\begin{itemize}
\item
$\wt L^{-\eps}$ intersects $L^0$ transversely at $k+1$ points  $B_0, B_1,\dots, B_k$; 
\item  if $\gamma_{B_j}, j=0.\dots, k$,   is a path in $\wt L^{-\eps}$  connecting the point    $B_j$ with a point on the boundary $\p Q_C$, then
$$  \int\limits_{\gamma_{{}_{B_j}}}\lambda_0=b_j,\; j=0,\dots, k;$$ 
\item the intersection indices of $L^0$ and $\wt L^{-\eps}$ at the points
$B_0, B_1,\dots, B_k$ are equal to $1,-1,\dots, -1$, respectively.
\item $\wt L^{-\eps}\cap \{s=\frac12\}$ is a Legendrian submanifold in $Q_C$ defined  by a generating function which is equal to $-\eps$ near $\p Q_C$ and positive over a domain in $Q_C$ of Euler characteristic $1-k$.
\end{itemize}

   \end{lemma}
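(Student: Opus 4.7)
The plan is to build $\wt L^{-\eps}$ explicitly as a Lagrangian cobordism in $SQ_C$ whose bottom slice is the $1$-jet graph of a carefully chosen Morse function $F=g-\eps$ and whose top is the flat slice $L^{-\eps}$. Intersections with $L^0$ will correspond bijectively to positive-value critical points of $g$, and the Morse data of $g$ will control both the Euler characteristic of $\{F>0\}$ and the intersection signs.

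First I choose a smooth Morse function $g:[-1,1]^{n-1}\to\R$ vanishing near the boundary, with exactly $k+1$ non-degenerate critical points above level $\eps$: a local maximum $x_0^*$ of Morse index $n-1$, and further critical points $x_1^*,\dots,x_k^*$ of Morse indices $i_j$ with $(n-1)-i_j$ odd (such indices exist for any $n\ge 3$; e.g.\ ordinary saddles when $n=3$). By Morse theory applied to the superlevel set $\{g>\eps\}$, the Euler characteristic equals $1+\sum_j(-1)^{(n-1)-i_j}=1-k$. The critical values $g_j^*:=g(x_j^*)$ will be specified below to realize the target actions; the hypothesis $b_0<C/(4k+4)$ provides ample room to keep $\|g\|_{C^1}\ll C$ so that the corresponding Legendrian fits in $Q_C$.

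Next I define $\wt L^{-\eps}$ by the ansatz
\[
\wt L^{-\eps} := \bigl\{\bigl(s,\,x,\,\rho(s)\nabla g(x),\,\mu(s)g(x)-\eps\bigr) : s\in[\tfrac12,1],\ x\in[-1,1]^{n-1}\bigr\},
\]
where $\rho:[\tfrac12,1]\to[0,1]$ is a smooth cutoff, locally constant near each endpoint with $\rho(\tfrac12)=1$ and $\rho(1)=0$, and $\mu(s):=(s\rho(s))'=\rho(s)+s\rho'(s)$. A direct computation gives $\lambda_0|_{\wt L^{-\eps}}=d\bigl(s^2\rho'(s)g(x)\bigr)$, so $\wt L^{-\eps}$ is exact Lagrangian; it coincides with $L^{-\eps}$ near $s=1$ and near $x\in\p[-1,1]^{n-1}$ (where $g$ vanishes), and near $s=\tfrac12$ it is cylindrical over the Legendrian with generating function $g-\eps$. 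The Lagrangian isotopy from $L^{-\eps}$ is obtained by replacing $g$ with $tg$, $t\in[0,1]$, in the same formula.

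The intersections with $L^0$ are the points $(s,x,0,0)$ with $\nabla g(x)=0$ and $\mu(s)g(x)=\eps$. Although the identity $\int_{1/2}^1\mu=-\tfrac12$ forces $\mu$ to become negative, I arrange $\rho$ so that $\mu$ is strictly decreasing on the subinterval $[\tfrac12,s_0]$ on which $\mu\ge0$, mapping it diffeomorphically onto $[0,1]$; each critical point $x_j^*$ then yields a unique $s_j^*\in[\tfrac12,s_0]$ with $\mu(s_j^*)=\eps/g_j^*$, producing exactly the $k+1$ transverse intersections $B_j=(s_j^*,x_j^*,0,0)$. Integrating the primitive $\Psi(s,x)=s^2\rho'(s)g(x)$ along any path in $\wt L^{-\eps}$ from $B_j$ to the lateral boundary (where $\Psi\equiv0$) gives the action $b_j=-(s_j^*)^2\rho'(s_j^*)g_j^*>0$, and a short orientation-determinant calculation shows the intersection sign at $B_j$ is $\sgn\bigl(\mu'(s_j^*)\cdot g_j^*\cdot\rho(s_j^*)^{n-1}\cdot\det\Hess g(x_j^*)\bigr)$; since $\mu'<0$, $g_j^*>0$, and $\sgn\det\Hess g(x_j^*)=(-1)^{i_j}$, the parity condition on $i_j$ makes $B_0$ and $B_j$ ($j\ge1$) have opposite signs, so after a global orientation fix $B_0$ has index $+1$ and the others $-1$.

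The main technical step is to show that $\rho$ and the $g_j^*$ together allow any prescribed actions. Writing the formula as $b_j=\eps s_j^*\bigl(\rho(s_j^*)/\mu(s_j^*)-1\bigr)$ and treating each triple $(s_j^*,\rho(s_j^*),\mu(s_j^*))$ as free data (with $g_j^*=\eps/\mu(s_j^*)$ recovered afterwards), this reduces to a $(k+1)$-point interpolation problem for the profile of $\rho$ subject to the monotonicity and boundary conditions above. This is straightforward, and the size bound $b_0<C/(4k+4)$ is exactly what is needed to keep the resulting Morse function $g$ and its Legendrian graph comfortably inside $Q_C$.
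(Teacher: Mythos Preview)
Your construction is correct and is essentially the same as the paper's: the authors define $\wt L^{-\eps}$ as the graph of the differential of a generating function $h(s,q)=-\eps s+\theta(s)\phi(q)$ in the canonical coordinates $(s,q;z,v)=(s,q;z,sy)$, which under the substitution $\theta(s)=s\rho(s)$ (so $\theta'=\mu$, $\theta/s=\rho$) and $\phi=g$ yields exactly your parametrization, your primitive $s^2\rho'(s)g(x)=(s\theta'-\theta)\phi$, and your intersection analysis. The only cosmetic differences are that the paper fixes the index of the non-maximum critical points to be $n-2$ and realizes the target actions by a one-variable intermediate value argument in $\wt b_j\mapsto\wh b_j$ rather than your multi-point interpolation for $\rho$; note also that your claim ``$\mu$ strictly decreasing on $[\tfrac12,s_0]$'' is literally false near $\tfrac12$ (where $\rho'\equiv0$ forces $\mu\equiv1$), but since all relevant values $\eps/g_j^*$ lie in $(0,1)$ this does not affect uniqueness.
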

   \begin{proof}
   We have
   $$\omega:=d\lambda_0=ds\wedge dz-\sum\limits_1^{n-1}dx_i \wedge d(sy_i)  =-d(zds+\sum\limits_1^{n-1}v_i dq_i),$$
    we denoted $ v_i:=sy_i, \; i=1,\dots, n-1.$ Let $I^{n-1}\subset\R^{n-1}$ be the  cube
   $\{\mathop{\max}\limits_{i=1,\dots, n-1}|q_i|\leq 1\}$.
   Choose a smooth non-negative function  $\theta:[\frac12, 1]\to \R$
   such that 
  \begin{itemize}
  \item $\theta(s)=s$ for $s\in[\frac12,\frac58]$;
  \item   $\theta$ has a unique local maximum at a point $\frac34$;
    \item $\theta(s)=0$ for $s$ near $1$;
  \item the derivative $\theta'$ is monotone decreasing on $[\frac58,\frac34]$.
  \end{itemize}
  
For any    $ \wt b_0,\dots, \wt b_k\in(0,\frac{C}{2k+2})$  which satisfy
$\wt b_0>\max(\wt b_1,\dots,\wt b_k)$ one can construct a   smooth non-negative function $\phi:I^{n-1}\to\R$. 
   with the following properties:
      \begin{itemize}
      \item $\phi= 0$ near $\p I^{n-1}$;  
      \item $ \mathop{\max}\limits_{i=1,\dots, n-1} \left|\frac{\p \phi}{\p q_i}\right|<\frac C2$;
      \item besides degenerate critical points corresponding to the critical value $0$, the function $\phi$ has $k+1$ positive non-degenerate critical points:  $1$ local  maximum $\wt B_0$ and $k$ critical points $ \wt B_1,\dots,\wt B_k$ of index $n-2$ with   critical values $ \wt b_0,  \wt b_1,\dots,  \wt b_k$ respectively.
          \end{itemize}
           
       Take a positive $\eps<\min(\wt b_1,\dots, \wt b_k, \frac{  C}{8k+8})$ and define a function 
   $h:[\frac12,1]\times I^{n-1}\to\R$ by the formula
   $$h(s,q)= -\eps s+\theta(s)\phi(q),\; s\in\left[\frac12,1\right], q\in I^{n-1}.$$

      Thus   the function  $h$  is equal to $s(-\eps+\phi(q))$ for $s\in[\frac12,\frac58]$ and equal to  $-\eps s$ near the rest of the boundary of $[\frac12,1]\times I^{n-1}$. The function $h$ has one local  maximum at a point $(s_0, \wt B_0)$ and $k$ index $n-1$ critical points with coordinates $(s_j, \wt B_j)$, $j=1,\dots, k$.
      Here the values $s_j\in[\frac58,\frac34]$ are determined from the equations
      $\wt b_j\theta'(s_j) =\eps$, $j=0,\dots, k$. 
      Respectively, the critical values  
      are equal to $ \wh b_k:= -\eps s_j+\theta(s_j)\wt b_j$,
      For  $\wt b_j$  near   $\eps$  we have  $\wh b_j<\eps $, while for $\wt b_j$ close to $\frac C{2k+2}$ we have $\wh b_j>\frac C{4k+4}$. Hence, by continuity, any critical values $    b_0,  b_1, \dots  b_k\in   \left(\eps, \frac C{4k+4}\right)$ which satisfy the inequality
      $b_0>\max(b_1,\dots, b_k)$ can be realized.
      
     The required Lagrangian manifold $\wt L^{-\eps}$  can be now defined by the  equations      $$ z =\frac{\p h}{\p s},\; x_j = q_j,\;v_j=\frac{\p h}{\p p_j}, j=1,\dots, n-1,\; s\in\left[\frac12,1\right],\; q\in I^{n-1},$$ or
      returning to $x,y,z,s$ coordinates by the equations
      $$\wt L^{-\eps}=\left\{z=  \frac{\p h}{\p s}, y_j=\frac1s\frac{\p h}{\p q_j}\right\}.$$
      It is straightforward to check that $\wt L^{-\eps}$ has the required properties.
           \end{proof}
           
After using Lemma \ref{lm:small-action} to shrink the action of an intersection point, Lemma \ref{lm:loose-trick-model}, applied with $k=0$, will allow us to balance any negative intersection point.
 Positive intersection points still provide a challenge though, because the intersection point with the largest action created by Lemma \ref{lm:loose-trick-model} is always positive. The following lemma solves this issue.

\begin{lemma} \label{lm:2points}
 Let  $f:L\to (X,\lambda)$ be a proper     {\it exact} Lagrangian immersion  into a simply connected $X$ and $D \subset L$ an   $n$-disc  which contains no double points of the immersion $f$. Then for 
 any $A>0$  and a sufficiently small $\sigma>0$ there exists a   supported in $D $  Hamiltonian regular homotopy  of $f$ to $\wt f$ which creates a pair $p_+,p_-$ of additional   self-intersection points  such that  $a_\SI(p_\pm,\wt f)=A\pm\sigma$,  the self-intersection indices of $p_\pm$ have opposite signs and can be chosen at our will. 
\end{lemma}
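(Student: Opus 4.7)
Since $D$ contains no self-intersections of $f$, the restriction $f|_D$ is an embedding, and the Weinstein Lagrangian neighborhood theorem yields a symplectomorphism of a tubular neighborhood of $f(D)$ onto a neighborhood of the zero section in $T^*D\cong T^*D^n$, carrying $f|_D$ to the inclusion. Since $D^n$ is contractible, $H^1(D^n)=0$, and therefore any Lagrangian regular homotopy of $f$ supported in $D$ is automatically Hamiltonian (the 1-form $\alpha_t=\iota_{\p/\p t}F^*\om$ is compactly supported in $D$ and exact there). It thus suffices to construct, inside $T^*D^n$, a compactly supported Lagrangian regular homotopy from the zero section $0_{D^n}$ to an immersed Lagrangian $L_1$ with exactly two additional transverse self-intersection points $p_+,p_-$, of prescribed signs and of actions $A+\sigma$ and $A-\sigma$, respectively.

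\textbf{Planar model.} In $\R^2=T^*\R$ with Liouville form $\lambda=y\,dx$, construct an immersed arc $\gamma\colon[0,1]\to\R^2$ that agrees with the $x$-axis near the endpoints and in between contains two small transversely self-intersecting teardrop loops $\ell_+$ and $\ell_-$. By Stokes' theorem, the action of the self-intersection of $\ell_\pm$ (the integral of $\lambda$ along the subarc between its two preimages) equals the signed enclosed area of $\ell_\pm$, which we adjust to be $A+\sigma$ and $A-\sigma$. The planar self-intersection sign at each loop can be independently prescribed by reversing its traversal and reshaping it to preserve the enclosed area. The arc $\gamma$ is connected to the $x$-axis by the obvious compactly supported planar regular homotopy shrinking each loop to a point.

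\textbf{Suspension and verification.} Write $T^*D^n=T^*\R\times T^*\R^{n-1}$ and choose a smooth family $\{\gamma_u\}_{u\in D^{n-1}}$ of planar arcs with $\gamma_0=\gamma$, with $\gamma_u$ equal to the $x$-axis for $u$ outside a small ball and near $\p D^{n-1}$, and with $\gamma_u$ embedded for all $u\ne 0$. Let $t_0^\pm,t_1^\pm$ be the preimages of the two planar double points of $\gamma_0$. Define the Lagrangian immersion
\[
\Gamma(t,u)\;=\;\bigl(\gamma_u^1(t),\ u,\ \gamma_u^2(t),\ v_u(t)\bigr)\in T^*D^n,
\]
where $v_u\colon[0,1]\to\R^{n-1}$ is the $t$-primitive of $\partial_t\gamma_u^1\,\partial_u\gamma_u^2-\partial_u\gamma_u^1\,\partial_t\gamma_u^2$ dictated by the Lagrangian condition, normalized to vanish outside the $t$-support of the modification. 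Any self-intersection of $\Gamma$ forces $u=u'$, so it arises only over $u=0$ and corresponds to a pair $(t_0^\pm,0),(t_1^\pm,0)$ that additionally satisfies the scalar equations $v_0(t_0^\pm)=v_0(t_1^\pm)$. These two equations reduce, by Stokes, to the vanishing of $\int_{t_0^\pm}^{t_1^\pm}\bigl(\partial_t\gamma_0^1\,\partial_u\gamma_u^2-\partial_u\gamma_u^1\,\partial_t\gamma_0^2\bigr)\bigr|_{u=0}\,ds$, which we enforce by choosing the profile of $\partial_u\gamma_u|_{u=0}$ to have balanced mass on each of the two loop intervals $[t_0^\pm,t_1^\pm]$. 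The resulting $\Gamma$ has exactly two transverse self-intersections $P_\pm$; their actions equal the planar actions $A\pm\sigma$, and a direct orientation calculation on the $n$-dimensional tangent planes at the double points identifies their signs with the (independently prescribed) planar signs at $\ell_+$ and $\ell_-$. Continuously deforming $\gamma_u$ back to the $x$-axis provides the required regular homotopy of $\Gamma$, and transporting through the Weinstein identification completes the construction of $\wt f$.

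\textbf{Principal obstacle.} The delicate point is to simultaneously achieve (i) embeddedness of $\gamma_u$ for $u\ne 0$, (ii) the balance conditions guaranteeing $v_0(t_0^\pm)=v_0(t_1^\pm)$, (iii) the prescribed signs and actions at $p_\pm$, and (iv) the absence of spurious double points. All four are independent local constraints on compactly supported pieces of $\gamma_u$, and they can be realized by placing the $u$-variation of $\gamma_u$ on small $t$-intervals disjoint from $[t_0^\pm,t_1^\pm]$ and tuning its profile to cancel the Stokes integrals above separately on the two loop intervals.
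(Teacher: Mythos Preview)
Your reduction to a Weinstein neighborhood of $f(D)$ contains a genuine gap: the action $A$ is allowed to be arbitrarily large, but a tubular neighborhood of $f(D)$ in $X$ has bounded symplectic size. Concretely, your teardrops $\ell_\pm$ must lie in the image of the Weinstein chart, hence in a set of the form $\{|y|<\eps\}$ over the zero section in $T^*D^n$; the enclosed area $\int y\,dx$ is then bounded by roughly $\eps\cdot\mathrm{diam}(D)$, and you cannot realize $A\gg\eps$ with only two simple loops. Notice that you never invoke the hypothesis that $X$ is simply connected --- this is a warning sign, since that hypothesis is precisely what allows the paper to escape the local chart. The paper's argument picks two points $a,b\in D$ and connects $f(a)$ to $f(b)$ by an embedded path $\gamma$ through $X\setminus f(L)$ with $\int_\gamma\lambda$ equal to any prescribed value $R$; simple connectivity of $X$ guarantees that $\gamma$ is homotopic rel endpoints to a path in $f(L)$, so the resulting action is well defined. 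A Darboux tube along $\gamma$ then models the two sheets of $f(D)$ near $a$ and $b$ as parallel Lagrangian planes $L^0,L^1$ in $\R^{2n}$, and one pushes $L^1$ across (via an explicit generating function, Lemma~\ref{lm:local-Lagrangian}) to meet $L^0$ in two points of opposite sign. The large action is carried by $\int_\gamma\lambda$, not by area enclosed inside a local cotangent chart.

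There is also a secondary problem with your suspension. Since $\gamma_0$ has \emph{transverse} planar double points, these persist under small perturbation: a smooth family cannot have $\gamma_u$ embedded for all $u\neq 0$ while $\gamma_0$ has transverse self-intersections. What you presumably intend is that the lift $\Gamma$ has no self-intersections over $u\neq 0$, which would require $v_u(t_0(u))\neq v_u(t_1(u))$ there --- a condition in tension with your balance requirement $v_0(t_0^\pm)=v_0(t_1^\pm)$ at $u=0$, and one whose compatibility with transversality of the $n$-dimensional branches of $\Gamma$ you have not checked. Even if this were repaired, the action bound above remains fatal to the approach.
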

Let us introduce some notation.
Consider a domain 
$$U_{\eps }:=\{-2\eps < p_1 < 1+2\eps, \mathop{\max}\limits_{1\leq i\leq n}|q_i| < 2\eps, \mathop{\max}\limits_{1\leq j\leq n}|p_j| < 2\eps \} $$
 in the standard symplectic $\R^{2n}_\st=(\R^{2n},\sum\limits_1^n dp_i\wedge dq_i)$. Let $L^t$ be the Lagrangian plane 
 $\{p_1=t,  p_j=0\;\hbox{ for}\; j=2,\dots, n \}\cap U_\eps\subset U_\eps,\; t\in\{0,1\}$. 
 Note that  $pdq|_{L^t}=tdq_1.$
We will   also use the following notation associated with $U_{\eps }$:
\begin{description}
\item{} $u_\pm\in L^1$ denote the points with coordinates
$p=(1,0,\dots, 0), q=(\pm \delta_1,0,\dots, 0)$;
\item{} $z_\pm\in L^0$  denote the points with coordinates
$p=(0,0,\dots, 0), q=(\pm \delta_1 ,0,\dots, 0)$
\item{} $c^0$ denote the point with coordinates
$p=(0,0,\dots, 0), q=(- \eps,0,\dots, 0)$;
\item{} $c^1$ denote the point with coordinates
$p=(1,0,\dots, 0), q=(- \eps,0,\dots, 0)$;
\item{}  $J^1_\pm$ denote the intervals connecting $c^1$ and $u_\pm$;
\item{} $J^0_\pm$ denote the intervals connecting $c^0$ and $z_\pm$.
\end{description}
We will use in the proof  of \ref{lm:2points}
the following
\begin{lemma}\label{lm:local-Lagrangian}

   There exists a Lagrangian isotopy   $f_t:L^1\to U_\eps$   fixed near $\p L^1$  and starting at the inclusion $f_0:L^1\hookrightarrow U_{\eps}$ such that $\wt L^1=f_1(L^1)$ transversely intersects $L^0$ at two points $z_\pm$ 
  with the following properties:
\begin{itemize}
\item $f_1^*(pdq)= q_1+d\theta$, where $\theta:L^1\to \R$ is  a compactly supported in $\Int L^1$ function such that 
$\theta(z_\pm)=\mp\delta$  for a sufficiently small $\delta>0$;
\item the intersection  indices of $\wt L^1$ and $L^0$ at $z_+$ and $z_-$ have opposite signs  and can be chosen at our will.
\end{itemize}
\end{lemma}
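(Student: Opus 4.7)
The plan is to realize $\wt L^1$ as a Lagrangian graph over $L^1$ (viewed as the zero section of its cotangent bundle), which reduces the lemma to a Morse-theoretic question about a single generating function. Parametrize candidate perturbations of $L^1$ as $\{(q, dG(q)) : q \in L^1\}$ for a smooth function $G : L^1 \to \R$ with $G(q) = q_1$ near $\p L^1$, and write $H := G - q_1$, which is then compactly supported in $\Int L^1$. Since $f_1^*(p\,dq) = dG = dq_1 + dH$, the primitive requested by the lemma is just $\theta = H$. A point of $\wt L^1 \cap L^0$ corresponds to a critical point of $G$, transversality corresponds to non-degeneracy, and the intersection sign is $\sgn \det \Hess G$. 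Thus it suffices to construct a smooth function $G : \R^n \to \R$, equal to $q_1$ outside a small ball around the origin, with exactly two non-degenerate critical points at $z_\pm = (\pm\delta_1, 0, \ldots, 0)$, values $G(z_\pm) = \pm\delta_1 \mp \delta$, and opposite prescribed signs of $\det\Hess G$.

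I would construct $G$ via the product ansatz $G(q) = g(q_1) + \frac{1}{2}\sigma\,\chi(q_1)|q'|^2$, where $\chi$ is a smooth bump equal to $1$ near $\pm\delta_1$ and vanishing outside a slightly larger interval, $\sigma \in \{+1,-1\}$, and $g : \R \to \R$ is a one-dimensional profile with $g(t) = t$ for $|t|$ large and exactly two non-degenerate critical points at $\pm\delta_1$ with $g(\pm\delta_1) = \pm\delta_1 \mp \delta$. Because $g' = 1$ outside a compact set, the intermediate value theorem forces $g$ to have a local max at $-\delta_1$ and a local min at $\delta_1$, so $g(-\delta_1) > g(\delta_1)$, which is consistent with the prescribed values precisely when $\delta > \delta_1$; taking $\delta_1$ small enough and $\delta$ correspondingly small realizes this (such a $g$ is then given by an elementary one-dimensional interpolation). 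The Hessian reads $\Hess G(z_\pm) = \diag(g''(\pm\delta_1),\sigma,\ldots,\sigma)$, so the two determinants automatically have opposite signs, and flipping $\sigma$ swaps the overall sign at $z_+$, allowing either prescribed assignment of intersection indices to $z_\pm$.

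The Lagrangian isotopy is then the straight-line interpolation $f_t(q) = (q, d(q_1 + tH)(q))$, which is Lagrangian for every $t$, fixed near $\p L^1$ since $H$ is compactly supported, and stays inside $U_\eps$ provided $|dH|$ is kept small (which we ensure by taking the support radius and $\delta$ small). The main obstacle is the arithmetic of the Morse data on $g$: simultaneously arranging the critical locations $\pm\delta_1$, the critical values $\pm\delta_1\mp\delta$, and the boundary condition $g = q_1$ outside, while respecting the forced max--min ordering — this reduces to the compatibility $\delta > \delta_1$ identified above, which is easily met by choosing both parameters appropriately small. All other requirements (transversality, sign choice at $z_\pm$, the primitive formula for $\theta$) follow mechanically from the graph description.
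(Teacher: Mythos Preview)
Your strategy—realizing $\wt L^1$ as the graph of $dG$ for a generating function with two Morse critical points—is exactly the paper's, but the ansatz $G(q)=g(q_1)+\tfrac12\sigma\,\chi(q_1)|q'|^2$ has two genuine gaps. First, $H=G-q_1$ is \emph{not} compactly supported: when $q_1\in\supp\chi$ the quadratic term $\tfrac12\sigma\chi(q_1)|q'|^2$ is nonzero for all $q'\neq 0$, so $H$ fails to vanish near the portion of $\p L^1$ where some $|q_j|$, $j\ge 2$, approaches $2\eps$. You must also cut off in the transverse directions; the paper multiplies by a factor $\beta\bigl(\max_{j\ge2}|q_j|/\eps\bigr)$ and then takes the coefficient of the quadratic form small enough that the derivatives introduced by this cutoff create no spurious critical points and keep the image inside $U_\eps$. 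That estimate is the only real work in the lemma, and your proposal omits it.

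Second, your sign-flipping mechanism fails whenever $n$ is odd. You correctly compute $\Hess G(z_\pm)=\diag(g''(\pm\delta_1),\sigma,\ldots,\sigma)$, so $\det\Hess G(z_\pm)=g''(\pm\delta_1)\,\sigma^{n-1}$; when $n-1$ is even, $\sigma^{n-1}=1$ for both choices of $\sigma$, and since (as you yourself note) the one-variable topology forces $g$ to have its local maximum at $-\delta_1$ and its local minimum at $+\delta_1$, you can never swap which of $z_\pm$ carries the positive index. The fix, used in the paper, is to replace $\sigma|q'|^2$ by a quadratic form $Q_j(q')$ of arbitrary Morse index $j-1$; changing $j$ by one flips the parity of the index at both critical points and hence the intersection signs, in every dimension.
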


\begin{proof} For   sufficiently small $\delta_1,\delta_2$, $ 0<\delta_1\ll\delta_2\ll\eps$,  there exists  a $C^\infty$-function
$\alpha:[-\eps,\eps]\to\R$   with the following properties:
\begin{itemize}
\item $\alpha(t)= t$ for $\delta_2\leq |t|\leq\eps$;
\item $\alpha(t)=t^3- 3\delta_1^2 t$ for $|t|\leq \delta_1$;
\item the function $\alpha$ has no critical points, other than $\pm\delta_1$;
\item $-\frac{\eps}2<\alpha'(t)<1+ \frac{\eps}2$.
\end{itemize}

Let us also take a cut-off function $\beta:[0,1]\to[0,1]$ which is equal to $0$ near $1$ and equal to $1$ near $0$.
Take  a quadratic form $Q_j$ of index $j-1$:
 $$Q_j(q_2,\dots, q_n)= -\sum\limits_{i=2}^{j} q_i^2+\sum\limits_{j+1}^n q_i^2,  \;j=1,\dots, n,$$  
and define a function $  \sigma:\{ |q_i|\leq\eps; i=1,\dots, n\}\to\R$ by the formula
$$\sigma_j(q_1,q_2,\dots, q_n)= q_1 +\delta_2 Q_j(q_2,\dots, q_n)\beta\left(\frac{\rho}\eps\right)\beta\left(\frac{|q_1|}\eps\right)+(\alpha(q_1)- q_1)\beta\left(\frac{\rho}{\eps}\right),$$ where we denoted $\rho:=\mathop{\max}\limits_{2\leq i\leq n}|q_i|$.
The function $\sigma_j$ has two critical points $(-\delta_1,0,\dots,0)$ and $(\delta_1,0,\dots,0)$ of index $j$ and $j-1$, respectively.
We note that 
$$- \frac\eps2-  Cn\delta_2   \eps  \leq \frac{\p\sigma_j}{\p q_1} <1+\frac\eps2+ Cn\delta_2\eps$$ and 
 $$\left|\frac{\p\sigma_j}{\p q_i}\right|\leq 2\delta_2\eps+ Cn\delta_2\eps+  \frac{C\delta_2}\eps $$ for $i>1$, where $C=||\beta||_{C^1}$. In particular, if $\delta_2$ is chosen   small enough we get  $-\eps< \frac{\p\sigma_j}{\p q_1} <1+\eps$ and $\left|\frac{\p\sigma_j}{\p q_i}\right|<\eps$ for $i=2,\dots,n$.

Assuming that $L^1$ is parameterized by the $q$-coordinates we    define the required  Lagrangian isotopy $f_t:L^1\to U_\eps $
  by the formula:
  $$f_t(q)=\left(q,1+t\left(\frac{\p\sigma_j}{\p q_1}-1\right),t\frac{\p\sigma_j}{\p q_2}, \dots, t\frac{\p\sigma_j}{\p q_n}\right)),  |q_i| < 2\eps;\; i=1,\dots, n. $$
  The Lagrangian manifold $\wt L^1=f_1(L^1)$ intersects $L^0$ at two points $z_\pm$ with coordinates $p=0, q_1=\pm\delta_1,q_2=0,\dots,q_n=0$. The intersection index of  $\wt L^1$ and $L^0$ at $z_-$    is equal to $(-1)^j$, and to  $(-1)^{j-1}$ at $z_+$. Thus by choosing $j$ even or odd we can arrange the intersection to be positive at $z_+$ and negative at $z_-$, or the other way around.
  The compactly supported function $\theta$ determined from the equation $f_1^*(pdq)= dq_1+d\theta$ is equal to $\sigma_j-q_1$. In particular, $\theta(z_\pm)=\mp {2\delta_1^3} $. 
   \end{proof}

    \begin{proof}[Proof of Lemma \ref{lm:2points}]
     We  denote  $\wt J^1_\pm:=f_1(J^1_\pm)$, where $f_t$ is the isotopy constructed in Lemma \ref{lm:local-Lagrangian}. Take any two points $a,b\in D\subset \wt D:=f(D)\subset \wt L:=f(L)$ and connect them by a path
 $\eta:[0,1]\to \wt D$ such that $\eta(0)=\wt b:=f(b)$ and $\eta(1)=\wt a:=f(a)$. Denote
 $B:=\int\limits_\eta\lambda$. 
 
 For any  real $R $  there exists an embedded path $\gamma:[0,1]\to X$ connecting the points $\gamma(0)=\wt a $ and
 $\gamma(1)=\wt b $ in the complement of $\wt L $, homotopic to a path in $\wt L$ with fixed ends, and such that $\int\limits_\gamma \lambda=R $.  For a sufficiently small $\eps>0$ the embedding $\gamma$ can be extended to a symplectic embedding $\Gamma:U_{\eps}\to X$ such that $\Gamma^{-1}(\wt L)=L^0\cup L^1$. Here we identify the domain $[0,1]$ of the path $\gamma$ with the interval
$$I=\{q_1=-\eps, q_j=0, j=2,\dots, n;0\leq p_1\leq 1,p_j=0, j=2,\dots, n\} \subset \p U_{\eps },$$ so that we have $  \Gamma(c^0)=\wt a$ and $\Gamma(c^1)=\wt b$.

      \begin{figure}
 \center{\includegraphics[height=80mm]{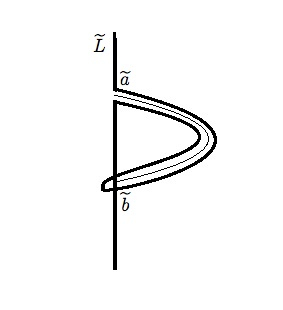}} \caption{The Lagrangian $f_1(L)$. The light curve represents $\gamma$.} \label{fig:curve}
\end{figure}

 The  Lagrangian isotopy $\wt f_t:=\Gamma\circ f_t:L^1\to X$, where  $f_t:L^1\to U_{ \eps }$ is the isotopy    constructed in Lemma  \ref{lm:local-Lagrangian},  extends as a constant homotopy to the rest of $L$  and  provides us with a regular Lagrangian homotopy   connecting the immersion $f$ with  a Lagrangian immersion $L\to X$ which has two more  transverse intersection points $p_\pm:=\Gamma(z_\pm)$ of opposite intersection index sign. See Figure ~\ref{fig:curve}.  Consider the following   loops $\zeta_\pm$ in $\wt L\subset X$ based at  the points
 $p_\pm$.  
 We start from the point  $p_\pm$ along the $\Gamma$-image   of  the oppositely oriented interval $ \wt J^1_\pm$ to the point $\wt b$ , then follow the path $\eta$ to the point $ \wt a$,  and finally follow
 along the $\Gamma$-image   of  the path $  J_0$  back to $p_\pm$.  
 
 Then we have 
 \begin{align*}
 &\int\limits_{\zeta_\pm}\lambda   =-\int\limits_{\wt J^1_\pm}\Gamma^* \lambda+\int\limits_{\eta}\lambda+\int\limits_{J^0_\pm}\Gamma^*\lambda\cr
 &=\left(-\int\limits_{\wt J^1_\pm}\Gamma^* \lambda+\int\limits_\gamma\lambda+ \int\limits_{J^0_\pm}\Gamma^*\lambda\right)+\left(\int\limits_{\eta}\lambda
 -\int\limits_\gamma\lambda\right)\cr 
 &
 =\left( -\int\limits_{\wt J_\pm^1}pdq-\int\limits_Ipdq+ \int\limits_{J^0_\pm}pdq\right)+(B+R)= -\eps+B+R \mp 2\delta_1^3.
 \end{align*}
 It remains to observe that there exists a sufficiently small  $\eps_0>0$ which can be chosen for any $R\in [A-C-1, A-C+1]$. Hence, by setting $R=A-C-\eps_0$ and $\eps=\eps_0$ we arrange  that the action of the intersection points $p_\pm$  is equal  to $A\mp2\delta_1^3$
 while their  intersection indices have opposite sign which could be   chosen at our will.
 \end{proof}

      \medskip
        \begin{lemma}     \label{lm:parallel-slices}
        Let $((0,\infty) \times Y, d(t\alpha))$ be the symplectization of a manifold $Y$ with a contact form $\alpha$. Let $\Lambda$ be a Legendrian submanifold and $L=(0,\infty)\times \Lambda$ the Lagrangian cylinder over it. Suppose that there exists a contact form preserving embedding $\Phi:(Q_C,\alpha_\st)\to (Y,\alpha)$ and $\Gamma\subset Y$ an embedded isotropic arc connecting a point  $b\in\Lambda$ with a point $$\Phi(x_1=1,x_2=0,\dots, x_{n-1}=0,  y_1=0, \dots, y_n=0, z=0)\in \p\Phi(Q_C).$$
        Then there exists a Lagrangian isotopy $L_t\subset \R\times \Lambda$ supported in a neighborhood of $1\times \Gamma\cup \Phi(Q_C)$, $t\in[0,1]$, which begins at $L_0=L$ such that
        \begin{itemize} 
        \item $L_t$ transversely intersects $1\times Y$ along a Legendrian submanifold $\Lambda_t$;
        \item $\Phi^{-1}(\Lambda_1)= \Lambda^0\cup\Lambda^{-\eps}$ for a sufficiently small $\eps>0$.
        \end{itemize}
        \end{lemma}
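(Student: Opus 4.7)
I construct $L_1$ by attaching a Lagrangian ``tongue'' to the cylinder $L=(0,\infty)\times\Lambda$ which protrudes above the slice $\{s=1\}$ along the isotropic arc $\Gamma$ and descends into $\Phi(Q_C)$ as a second sheet, contributing the parallel slab $\Phi(\Lambda^{-\eps})$ to the $\{s=1\}$-slice.

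\textbf{Step 1 (extend $\Gamma$ and build the local model).} First I extend $\Gamma$ inside $Q_C$ by an isotropic arc $\gamma$ running from the endpoint $(1,0,\dots,0;0,\dots,0;0)\in\Lambda^0$ of $\Phi^{-1}(\Gamma)$ to the point $(0,\dots,0;0,\dots,0;-\eps)\in\Lambda^{-\eps}$. Parametrizing $\gamma(t)=(1-t,0,\dots,0;\,y_1(t),0,\dots,0;\,z(t))$ with $y_1$ a smooth bump vanishing at $t=0,1$ such that $\int_0^1 y_1(t)\,dt=\eps$ and $z(t)=-\int_0^t y_1(t')\,dt'$, one checks directly that $\dot z = y_1\dot x_1$, so $\gamma$ is isotropic for $\alpha_\st$. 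Then $\wt\Gamma:=\Gamma\cup\Phi(\gamma)$ is an embedded isotropic arc in $Y$ connecting $b\in\Lambda$ to a point of $\Phi(\Lambda^{-\eps})$. Using the isotropic Weinstein neighborhood theorem I place a neighborhood of $\{1\}\times\wt\Gamma\subset(0,\infty)\times Y$ into a standard Darboux--Weinstein model. Inside this model I write down an explicit Lagrangian tongue $T$ which (a) attaches to $L$ in a small disc near $\{b\}\times\{s>1\}$, (b) rises above $\{s=1\}$ while traversing $\wt\Gamma$, and (c) descends transversely through $\{s=1\}$ inside $\Phi(Q_C)$ exactly along $\Phi(\Lambda^{-\eps})$. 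The tongue is described by an explicit generating function that interpolates the $z$-coordinate from $0$ at the $L$-end to $-\eps$ at the $Q_C$-end, with the necessary nonzero $y$-profile supplied by $\gamma$.

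\textbf{Step 2 (produce the isotopy).} Let $L_1$ be $L$ with $T$ glued in. I realize the whole construction as a compactly supported Lagrangian isotopy $L_t$ by smoothly interpolating between the generating function describing $L$ and the one describing $L_1$ in the Weinstein chart. By arranging the interpolation so that $1$ remains a regular value of the $s$-coordinate on $L_t$ throughout, each $L_t$ meets $\{s=1\}$ transversely along a Legendrian submanifold $\Lambda_t$, as required. The support of $L_t$ stays in the prescribed neighborhood of $\{1\}\times(\Gamma\cup\Phi(Q_C))$ by construction, since the generating function interpolation is supported inside the Weinstein model around $\wt\Gamma$.

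\textbf{Main obstacle.} The central technical issue is Step 1: producing the Lagrangian tongue so that all three boundary conditions match simultaneously---smooth emergence from $L$ near $b$ at $s$ slightly above $1$, a traversal of $\wt\Gamma$ with no spurious transverse intersections with $\{s=1\}$, and exact agreement with the slab $\Lambda^{-\eps}$ at $s=1$ inside $Q_C$. The interpolation of the $z$-coordinate from $0$ to $-\eps$ along $\wt\Gamma$ requires a nonzero $y$-contribution, which is precisely the isotropic data provided by the auxiliary arc $\gamma$; this is why extending $\Gamma$ into the chart is an essential part of the construction.
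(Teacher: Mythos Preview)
Your construction has a genuine gap: it produces only \emph{one} new sheet $\Lambda^{-\eps}$ inside $Q_C$, whereas the conclusion requires $\Phi^{-1}(\Lambda_1)=\Lambda^0\cup\Lambda^{-\eps}$. Recall that the original Legendrian $\Lambda$ is disjoint from $\Phi(Q_C)$---the arc $\Gamma$ is what connects them---so $\Lambda^0$ does not come for free from the unmodified part of $L$. Your tongue $T$ stays above $\{s=1\}$ while traversing all of $\wt\Gamma$ and crosses $\{s=1\}$ only once, along $\Phi(\Lambda^{-\eps})$; thus $\Phi^{-1}(\Lambda_1)=\Lambda^{-\eps}$ only.

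There is a closely related topological problem. If $T$ is glued to $L$ along a single disc near $(b,s_0)$ with $s_0>1$ and then crosses $\{s=1\}$ exactly once, the result is not a compactly supported isotopy of the cylinder $L=(0,\infty)\times\Lambda$: after the crossing the tongue has nowhere to go except down to $s\to 0$ as a new half-infinite cylinder over $\Lambda^{-\eps}$, or back up through $\{s=1\}$ a second time (which you have excluded). In either case $L_1$ is not diffeomorphic to $L$ rel infinity. What is actually needed is a \emph{folded} finger: a deformation of $L$ that pushes a piece of the Legendrian $\Lambda$ along $\Gamma$ into $Q_C$ as a two-sheeted front, so that its trace on $\{s=1\}$ is a doubled Legendrian strip. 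Inside $Q_C$ the two sheets of this fold become the parallel planes $\Lambda^{\pm\eps}$ (and a final $z$-shift of the chart turns these into $\Lambda^0\cup\Lambda^{-2\eps}$). The paper does exactly this: it thickens $\Gamma$ to an $(n-1)$-dimensional Legendrian strip $\Psi(\Sigma)$ meeting $\Lambda$ in a cusp $z=\pm(x_1-2)^{3/2}$, and then writes down an explicit family of generating functions $\pm G_s$ whose two branches produce the two sheets simultaneously. Your one-dimensional guiding arc $\gamma$ from a point of $\Lambda^0$ to a point of $\Lambda^{-\eps}$ carries the right action defect, but it cannot substitute for this fold; the two sheets must be born together out of the cusp at $b$, not one after the other along an arc.
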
 
                                          
       \begin{proof}
       We use below the notation $I^k_a$, $a>0$ for the cube
       $\{|x_i|\leq a, i=1,\dots, k\}\subset\R^k$.
    The embedding $\Phi$ can be extended to a slightly bigger domain $\wh Q=\{|x_i|\leq 1+\sigma,|y_i|\leq C,i=1,\dots, n-1,|z|\leq C+\sigma\}\subset\R^{2n-1}_\st$ for a sufficiently small $\sigma>0$.
         The intersection $ \wh Q\cap(\R^{n-1}=\{y=0,z=0\})$ is the cube $I^{n-1}_{1+\sigma}\subset\R^{n-1}$.
        We can assume that the  intersection of  the path $\Gamma$  with $\wh Q$ coincides with the interval $\{1\leq x_1\leq 1+\sigma, x_j=0,j=2,\dots, n-1\}\subset I^{n-1}_{1+\sigma}$.
        The Legendrian embedding $\Psi:=\Phi|_{I^{n-1}_{1+\sigma}}:I^{n-1}_{1+\sigma}\to Y$ can be extended to a bigger parallelepiped $$\Sigma=\{-1-\sigma\leq x_1\leq 2+\sigma, |x_j|\leq 1+\sigma, j=2,\dots, n-1\}\subset\R^{n-1}$$  such that the extended Legendrian embedding, still denoted by  $\Psi$,
        has the following properties:
        \begin{itemize}
        \item $\Psi(\{1\leq x_1\leq 2, x_j=0,j=2,\dots, n-1\})=\Gamma$;
        \item $\Psi(\{x_1=2\})\subset\Lambda$. 
        \end{itemize}
      For a sufficiently small  positive  $\delta<C$   the Legendrian embedding can be further extended as a contact form preserving  embedding  
       $$\wh \Psi:(\wh P:=\{(x,y,z)\in\R^{2n-1}_\st; x\in \Sigma, |y_i|\leq \delta, i=1,\dots, n-1,|z|\leq \delta\},\alpha_\st)\to (Y,\alpha),$$
    such that 
    \begin{itemize}
    \item $\wh\Psi|_{\wh P\cap \wh Q}=\Phi|_{\wh P\cap \wh Q}$;
    \item  the Legendrian manifold $\wh\Lambda:=\wh\Psi^{-1}(\Lambda)$ is given by the formulas
             $$\wh\Lambda:=\{z=\pm (x_1-2)^{\frac32}, y_1=\pm\frac32\sqrt{x_1-2},
    x_1\geq 2, y_j=0,j=2,\dots, n-1\}$$
    \end{itemize}
    (note that any point on any Legendrian admits coordinates describing $\wh \Lambda$ as above).

    Consider a cut-off  $C^\infty$-function  $\theta:[0,1+\sigma]\to[0,1]$   such that
     $\theta(u)=1$   if $u\leq 1$, $\theta(u)=0$ if $u>1+\frac\sigma2 ,
     \theta'\leq 0$,
   and denote  $$\Theta(u_1,\dots, u_{n-2}):=(3+\sigma) \prod \limits_1^{n-2} \theta( u_i),\; u_1,\dots, u_{n-2}\in[0,1+\sigma].$$
   
    For $s\in[0,1]$    denote
    $$\Omega_s:=\{2- s\Theta(|x_2|,\dots,|x_{n-1}|)\leq x_1\leq 2+\sigma\}\cap \Sigma\subset\R^{n-1}.$$
   We have
          $\Omega_1\supset \{-1-\sigma\leq x_1\leq2, |x_2|,\dots, |x_{n-1}|\leq 1\}\supset I^{n-1}_1$ and $\Omega_0=\{2\leq x_1\leq2+\sigma\}\cap \Sigma$.
    
         \begin{figure}
 \center{\includegraphics[scale=.6]{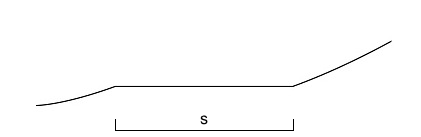} \caption{The function $g_s$.} \label{fig:function}}
\end{figure}
  
   For a sufficiently small positive $\eps<\frac{\sigma^{\frac32}}2$ consider a family of  piecewise smooth continuous functions $g_s:[2-s,2+\sigma]\to[0,\sigma^{\frac32}]$, $s\in[0,3+\sigma]$
   defined by the formulas 
   $$g_s(u)=\begin{cases}
   (u-2+s)^{\frac32},& u\leq 2-s+\eps^{\frac23};\cr
   \eps,& 2-s+\eps^{\frac23}<u<2+\eps^{\frac23};\cr
    (u-2)^{\frac32},& u\geq2+\eps^{\frac23}.
   \end{cases}
   $$
   See Figure \ref{fig:function}. We can smooth $g_s$ near the points $2+\eps^{\frac23}$ and $2-s+\eps^{\frac23}$ in such away that the derivative is monotone near these points
   (i.e. decreasing near $2-s+\eps^{\frac23}$ and increasing near 
   $2+\eps^{\frac23}$). We continue to denote the smoothened by $g_s$.
   
   Next, define for $s\in[0,1]$ a function
   $G_s:\Omega_s\to\R$ by the formula
   $$G_s(x_1,x_2\dots,x_{n-1})= g_{s\Theta(x_2,\dots, x_{n-1} )}(x_1).$$
   Note that by  decreasing $\eps$ and $\sigma$ we can arrange that $\frac{\p G_s}{\p s}(x),\left|\frac{\p G_s}{\p x_i}(x)\right|  <\delta  $, $i=1,\dots, n-1$, for all $s\in[0,1]$ and $x\in\Omega_s$.
   We also observe that   if $\frac{\p G_s}{\p x_1}(x)=0$ then $G_s(x)=\eps$. 
   Choose a cut-off function $\mu:[1-\delta,1+\delta]\to[0,1]$ which is equal to $1$ near $1$ and equal to $0$ near $1\pm \delta$ and 
      consider a  family of Lagrangian submanifolds $N_s$, $s\in[0,1]$,  defined in the domain
   $([1-\delta,1+\delta]\times \wh P, d(t\alpha_\st))$  in the symplectization of $\wh P$ defined by the formulas
  \begin{align*}
  &z=\pm  G_{s\mu(t)}(x)\pm t\frac{\p G_{s\mu(t)}}{\p t}(x) , y_i=\pm\frac{\p G_{s\mu(t)}}{\p x_i}(x),\\
  &x\in\Omega_{s\mu(t)},\; i=1,\dots, n-1,\; t\in[1-\delta,1+\delta].
  \end{align*}
 First, let us check that $N_s$ is  Lagrangian for all $s\in[0,1]$. Indeed,
 we have  $d(t\alpha_\st)=-d\left(zdt+\sum\limits_1^{n-1}(ty_i)dx_i\right)$, and hence
  $$d(t\alpha_\st)|_N=\pm d\left(\left(  G_{s\mu(t)}  +  t\frac{\p G_{s\mu(t)}}{\p t}\right)dt  
+ \sum\limits_1^{n-1}t\frac{\p G_{s\mu(t)}}{\p x_i}dx_i\right)=\pm d(d(tG_{ s\mu(t)}))=0.$$
Next, we check that $N_s$ is embedded. The only possible pairs of double points may be of the form $(x,y,z)$ and $(x,-y,-z)$, that is $z=0$ and $y=0$.
But then $\frac{\p G_{s\mu(t)}}{\p x_1}=0$, and hence $G_{s\mu(t)}(x)=\eps$ and $\frac{\p G_{s\mu(t)}}{\p t}(x)=0$, which shows 
  $z = G_{s\mu(t)}(x)+ t\frac{\p G_{s\mu(t)}}{\p t}(x)\neq 0  $. 
  
  We also note that $N_s\cap\{t=1\}$ is a Legendrian submanifold
  $\{z=  \pm G_{s\mu(t)}(x),     y_i=\pm\frac{\p G_{s\mu(t)}}{\p x_i}(x), i=1,\dots, n-1\}\subset \wh P$  and 
 $N_1$  intersects $Q_C$ along  
 $\Lambda^{-\eps}\cup\Lambda^{\eps}$.
 Near $t=1\pm\delta$ the submanifold
  $N_s$ coincides with the symplectization of the Legendrian submanifold  $\wh \Lambda$ for all $s\in[0,1]$.

 Let us remove   from  the   Lagrangian cylinder $L=(0,\infty)\times\Lambda\subset    ((0,\infty)\times Y,t\alpha)$  the domain $[1-\delta,1+\delta]\times \Lambda $ and replace it by $\Psi(N_s)$. The resulted Lagrangian isotopy $L_s$ has the following properties: $L_0=L$, $L_1$ intersects the contact slice $1\times Y$ along a Legendrian submanifold $\Lambda_1$  and $\Phi^{-1}  (\Lambda_1)=\Lambda^{-\eps}\cup\Lambda^\eps$. Note that if we modify the embedding $\Phi$ as
 $\wt\Phi(x,y,z)=\Phi(x,y,z-\eps)$ we still get a contact form preserving embedding  $\wt\Phi:(Q_C,\alpha_\st)\to (Y,\alpha)$  for which    $\wt \Phi^{-1}  (\Lambda_1)=\Lambda^{-2\eps}\cup\Lambda^0$.
            \end{proof}
             \begin{proof}[Proof of Proposition \ref{prop:balancing} for $n>3$]
                Let $X_-$  be  a negative Liouville end of $X$ bounded by 
   a contact slice $Y\subset X$     such that $f$ is cylindrical below it.
    Denote $\Lambda:=f^{-1}(Y)$.    According to Lemma \ref{lm:small-action} for any $\eps$ there exists a Hamiltonian  regular homotopy of $f$ into a Lagrangian  immersion  with transverse  self-intersection points of action $<\eps$. Moreover, the number of self-intersection points grows proportionally to $\frac1{\eps^3}$ when $\eps\to0$. For a sufficiently small $C>0$ there exists a contact form preserving embedding $(Q_C,\alpha_\st)\to (Y\setminus\Lambda, \alpha:=\lambda|_Y)$.
     Note that given an integer $N>0$ and a positive  $\eps<\frac CN$ there exists contact form preserving embeddings of $N^n$ disjoint copies of $(Q_\eps,\alpha_\st)$ into $(Q_C,\alpha_\st)$, i.e. when   decreasing $\eps$ the number  of domains   $(Q_\eps,\alpha_\st)$  which can be packed into $ (Y\setminus\Lambda, \alpha)$ grows proportionally to $\eps^{-n}$, which is greater than $\eps^{-3}$ by assumption.    Hence for  a sufficiently small $\eps$ we can modify the Lagrangian immersion $f$, so that the action of all its self-intersection points are $<\eps$, and at least $\SI(f)$  disjoint Darboux neighborhoods isomorphic to $Q_{12\eps}$ which do not intersect   $\Lambda$ can be  packed into 
      $ (Y, \alpha)$. We will denote the number of self-intersection points by $N$ and the corresponding $Q_{12\eps}$-neighborhoods by $U_1,\dots, U_N$. Notice that for a sufficiently small $\theta>0$ there exists a Liouville form preserving embedding   $((0,1+\theta)\times Y,  t\alpha)\to (X,\lambda)$ which sends $Y\times 1$ onto $Y$.
          
 For each intersection point $p_i\in f(L)$, $i =1,\ldots N$, we will find a compactly supported Hamiltonian regular homotopy to balance each intersection point  $p_i$   without changing the action of  the other intersection points. Recall $0<a_\SI(p_1,f)<\eps$.  Using Lemma \ref{lm:parallel-slices} we isotope the Lagrangian cylinder $(0,1+\theta)\times \Lambda)$ via a Lagrangian  isotopy supported in a neighborhood of $Y\times 1$ so that:
 \begin{description}
 \item{-}   the deformed cylinder
 $\wt \Lambda$ intersects $Y$ transversely along a Legendrian submanifold $\wt\Lambda$;
\item{-} for a sufficiently small $\sigma>0$   and each $i=1,\dots, N$,
the cylinder  $\wt \Lambda$ intersects $U_i=Q_{12\eps}$ along  Legendrian planes
    $\Lambda^0= \{y=0,z=0\}$ and $\Lambda^{-\sigma}= \{z=-\sigma ,y=0\}$.
    \end{description}
   We can further deform the Lagrangian $\wt L$ to make it cylindrical in
   $[\frac12,1]\times Y$, and hence,  we get embeddings $  (\left[\frac12,1\right]\times Q_{12\eps}, t\alpha_\st)\to ((0,1]\times Y,t\alpha)$ such that 
the intersections $ (\left[\frac12,1\right]\times U_i, \alpha_\st) $  with $\wt L$   coincide with the Lagrangians $L^0$ and $L^{-\delta}$ from Lemma \ref{lm:loose-trick-model}. 
    
There are two cases, depending on the sign of the intersection; suppose first that the self-intersection index at the point $p_i$ is negative.
     Then we apply Lemma \ref{lm:loose-trick-model} with $k=0$
     and construct a  cylindrical at $-\infty$  and  fixed everywhere except  $L^{-\delta}$ and $\Lambda^{-\delta}\times\left(0,\frac12\right]$  Hamiltonian regular homotopy of  the immersion
     $f$ which   deforms  $ L^{-\delta}$ to 
     $\wt L^{-\delta}$  such   that $L^0$ and $\wt L^{-\delta}$ positively intersect at 1 point $B_0$ of action $a_\SI(B_0,f)=a_\SI(p_i,f)$. Hence, the point $B_0$ balances $p_i$. Notice that this homotopes $\Lambda$ to another Legendrian $\wt \Lambda$, and in fact $\wt \Lambda$ will never be Legendrian  isotopic to $\Lambda$ (after a   balancing  of a sigle intersection point; we show below that it will be isotopic after all intersection points are balaned).
     
     If the self-intersection  index of $p_i$ is positive we
     first  apply Lemma \ref{lm:2points} to create two new 
    intersection points 
     $p_+$ and $p_-$ of index $1$ and  $-1$ and action equal to $A-\sigma$ and 
     $A+\sigma$ respectively, for some $A \in (a_\SI(p_i, f), a_\SI(p_i, f)+4\epsilon)$ and sufficiently small $\sigma>0$. We then apply Lemma \ref{lm:loose-trick-model} with $k=2$ and create 3 new intersection points  $B_0, B_1, B_2$ of indices $1,-1,-1$ and of action $A+\sigma$, $A - \sigma$ and 
   $ a_\SI(p_i,f)$, respectively. Then $(p_i, B_2)$, $(p_+, B_1)$
   and $(p_-, B_0)$ are balanced Whitney pairs. 
   
   In the course of the above proof, $\Lambda$ is homotoped to the Legendrian $\tilde{\Lambda}$ at $-\infty$. In order to make the  constructed Hamiltonian   homotopy  of our Lagrangian  fixed at $-\infty$, it suffices to show that $\Lambda$ is Legendrian isotopic to $\tilde{\Lambda}$, because we can then apply Lemma \ref{lm:Leg-Lag} to undo this homotopy near $-\infty$. Assume that $\Lambda$ has a loose component and $I(f) = 0$. In the course of the above proof we only need to homotope a single component of $\Lambda$ of our choosing; we choose the component of $\Lambda$ which is loose. Obviously we can also fix a universal loose Legendrian embedded in this component of $\Lambda$, thus the corresponding component of $\tilde{\Lambda}$ is also loose. Using part (ii) of Proposition \ref{prop:Murphy}, it only remains to show that $\Lambda$ is formally Legendrian isotopic to $\tilde{\Lambda}$. Because the algebraic count of self intersections of $f$ is zero the homotopy from $\Lambda$ to $\tilde{\Lambda}$ also has an algebraic count of zero self-intersections. This implies that they are formally isotopic; see Proposition 2.6 in \cite{Murphy-loose}. \end{proof} 
   
   To deal with the case $n=3$ we will need an additional lemma.
   Let us denote by    $P(C)$   the polydisc $\{p_i^2+q_i^2\leq \frac{C}\pi,\; i=1,\dots, n\} \subset\R^{2n}_\st$.
    \begin{lemma}\label{lm:inserting-QC}
    Let $(X,\omega)$ be a symplectic  manifold with a negative Liouville end,  $Y\subset X$ a contact slice, and $\lambda$ is the corresponding Liouville form on a neighborhood  $\Omega\supset X_-$ in $X$.   Suppose that there exists a symplectic embedding $\Phi:P(C)\to  X_+\setminus Y$. Let  $\Gamma$ be an embedded path  in $X_+$ connecting  a point   $a\in Y$ with a point in $b\in  \p \wt P$, $\wt P:=\Phi(P(C))$.   Then for any  neighborhoods $U\supset(\Gamma\cup\wt P)  $ in $X_+$ 
    there exists a Weinstein  cobordism $(W,\om, \wt X,\phi)$ such that
     \begin{enumerate}
     \item  $W\subset X_+\cap (U\cup\Omega)$, $\p_-W=Y$;
     \item the Liouville form $\wt \lambda=\iota(\wt X)\omega$ coincides with  $\lambda$ near $Y$ and on $\Omega\setminus U$;
    \item $\phi$ has no critical points;   
    \item the  contact manifold  $(\wt Y:=\p_+W, \wt\alpha:=\wt\lambda|_{\wt Y})$ admits a contact form preserving embedding $(Q_{a},\alpha_\st)\to(\wt Y,\wt\alpha)$ for any $a<\frac C2$. 
    \end{enumerate}     \end{lemma}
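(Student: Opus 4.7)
The plan is a handle-cancellation construction: embed the polydisc $\wt P := \Phi(P(C))$, equipped with its natural radial Weinstein structure, as an ``index-$0$ bubble'' inside a Liouville extension of $\Omega\cup\wt P$, attach a Weinstein $1$-handle along $\Gamma$ to merge the two positive boundary components, and then cancel the resulting pair of critical points using Proposition~\ref{prop:cancellation}. The outcome will be a critical-point-free Weinstein cobordism whose positive contact boundary is, outside a neighborhood of the handle core, the disjoint union of $Y$ with the smoothed contact boundary $\p^{\mathrm{sm}}\wt P$ of the polydisc.

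First, equip $P(C)$ with the radial Liouville field $Z_0=\tfrac12\sum(p_i\p_{p_i}+q_i\p_{q_i})$. After smoothing the corners of $\p P(C)$, one obtains a Weinstein domain whose boundary $\p^{\mathrm{sm}}P(C)$ is a regular level set and whose only critical point is the origin (of index $0$). Pull this structure back to $\wt P$ via $\Phi$. Since the pulled-back Liouville form and the given $\lambda$ are both primitives of $\omega$ on $\wt P$, their difference is exact, and the original $\lambda$ can be modified on a slightly larger neighborhood $U'\Subset U$ of $\wt P$ by adding $dH$ for a compactly supported function $H$, producing a Liouville form on $\Omega\cup U'$ that agrees with the pulled-back radial form near $\wt P$ and with the original $\lambda$ outside $U'$. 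The resulting Liouville domain $W_0 = \wt P\cup([0,\epsilon]\times Y)$ has negative boundary $Y$, positive boundary the disconnected union $Y'\sqcup\p^{\mathrm{sm}}\wt P$, and a single index-$0$ critical point at the center of $\wt P$.

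Next, perturb $\Gamma$ to be tangent to the Liouville fields at both endpoints, and modify $\Gamma$ within its path-homotopy class by introducing small symplectic ``wiggles'' inside $U$ so that $\int_\Gamma\lambda = 0$, as needed by Lemma~\ref{lm:surgery}. Applying that lemma with $k=1$ to $\Gamma$ produces a Weinstein cobordism $W_1\subset U\cup\Omega$ whose positive boundary is now connected and which has one additional critical point $p$ of index $1$, with stable disc $\Gamma$. Thus $W_1$ has exactly two critical points: the center $q$ of $\wt P$ of index $0$ and $p$ of index $1$. They are joined by a unique $Z$-trajectory running through the handle core, and at any intermediate level set their stable/unstable manifolds meet transversely in the single point $b\in\p^{\mathrm{sm}}\wt P$ (the unstable manifold of $q$ sweeps out $\wt P\setminus\{q\}$ while the stable manifold of $p$ restricts to $\Gamma$). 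Proposition~\ref{prop:cancellation} then yields a Weinstein structure on $W$ with no critical points, agreeing with the old one near $\p W$, which establishes (i)--(iii).

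For (iv), on the face $F_n=\{p_n^2+q_n^2=C/\pi\}$ of $\p P(C)$, parametrize the $n$-th factor by an angle $\theta$ and set $z=\tfrac{C}{2\pi}\theta+\tfrac12\sum_{i<n}p_iq_i$, $x_i=p_i$, $y_i=q_i$ for $i<n$; a direct calculation gives $\iota_{Z_0}\omega|_{F_n}=dz-\sum y_i\,dx_i$. Restricting $\theta$ to a subinterval of $(-\pi,\pi)$ of length $2\pi-\delta$ gives a $z$-window of length $C-\delta'$ after shifting, and since the corner smoothing and the handle cancellation modify the contact form only in small neighborhoods disjoint from this window, a contact-form-preserving embedding $(Q_a,\alpha_{\mathrm{std}})\to\wt Y$ exists for every $a<C/2$. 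The main obstacle is coordinating the hypotheses of Lemma~\ref{lm:surgery} (isotropy of $\Gamma$, tangency to the Liouville fields near its endpoints, and $\int_\Gamma\lambda=0$) with the uniqueness-of-trajectory and transversality conditions needed by Proposition~\ref{prop:cancellation}; each condition is achievable by generic perturbations and by the freedom to modify $\Gamma$ within its homotopy class, but the choices must be made consistently.
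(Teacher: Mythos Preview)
Your overall strategy---install the polydisc as an index-$0$ Weinstein bubble, attach a $1$-handle along $\Gamma$, and cancel the pair via Proposition~\ref{prop:cancellation}---is exactly the paper's approach, and your treatment of (i)--(iii) is essentially the same. The substantive difference is in step (iv), and there your choice of the \emph{radial} Liouville form on $P(C)$ does not give the stated bound $a<C/2$.

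Your coordinate computation on the face $F_n$ is correct: with $x_i=p_i$, $y_i=q_i$, $z=\tfrac{C}{2\pi}\theta+\tfrac12\sum p_iq_i$ one does get $\iota_{Z_0}\omega|_{F_n}=dz-\sum y_i\,dx_i$. But to embed $Q_a$ you must fit the cube $\{|x_i|\le 1,\ |y_i|\le a\}$ into the disc $\{p_i^2+q_i^2\le C/\pi\}$, which forces $1+a^2\le C/\pi$ (so in particular $C>\pi$, a hypothesis you do not have), and the $\theta$-window constraint combined with the correction $\tfrac12\sum x_iy_i$ gives only $a\lesssim C/(n+1)$, not $a<C/2$. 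Rescaling $x_i=s\,q_i,\ y_i=-p_i/s$ improves the first constraint to $2a\le C/\pi$ but still falls short of $C/2$. For the application in Proposition~\ref{prop:balancing} a constant factor would not matter, but the lemma as stated is not proved.

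The paper handles (iv) differently: instead of the radial form on the round polydisc it symplectically embeds a \emph{cube} $U_b=\{|q_i|\le 1,\ |p_i|<b\}$, $a<b<C/2$, into $P(C)$ and equips it with the Liouville form $\mu=\sum p_i\,dq_i-\sigma\,d(\sum p_iq_i)$ for small $\sigma>0$. On the flat face $\{p_n=b\}$ this restricts to a form into which $(Q_a,\alpha_\st)$ embeds strictly for every $a<b$, with no lower bound on $C$. The Liouville field dual to $\mu$ is outward transverse to (a smoothing of) $\partial U_b$, so the bubble-plus-handle-plus-cancellation argument then runs exactly as you outlined. In short: keep your architecture, but replace the radial model by the paper's cube-with-$\mu$ model to get the sharp constant.
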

   
    \begin{proof} For any $b \in (a, \frac C2)$ the domain   $U_{b}:=
    \{|q_i|\leq 1, |p_i|< b;\; i=1,\dots, n\}\subset\R^{2n}_\st$ admits a symplectic embedding $H:U_b\to\Int P(C)$.
        Denote $\p_n U_b:=\{p_n=b\}\cap\p \overline U_b$. Consider a Liouville form
    $\mu=\sum\limits_1^n(1-\sigma) p_i dq_i-\sigma q_idp_i=\sum\limits_1^n   p_i dq_i-\sigma d\left(\sum\limits_1^n p_iq_i\right)$, where  a sufficiently small $\sigma>0$ will be chosen later.
    Then  
       $$\beta:=\mu|_{\p_nU_b} = d\left((b-\sigma) q_n-\sigma \sum\limits_1^{n-1} p_iq_i\right)+\sum\limits_1^{n-1}   p_i dq_i.$$
      
      Let us verify  that for a sufficiently small $\sigma>0$ there exists a contact form preserving embedding
       $(Q_a,\alpha_\st)\to (\p_nU_b,\beta)$. Consider the map
     $\Psi:Q_a\to  \R^{2n}_\st$ given by the formulas
     $$p_i=-y_i, q_i=x_i, \; i=1,\dots, n-1,p_n=b, q_n=\frac{z}{b-\sigma}-
     \frac{\sigma}{b-\sigma}\sum\limits_1^{n-1}x_iy_i.$$  
      Note that $|q_n|\leq \frac {a + a \sigma(n-1)}{b-\sigma}<1$ if $\sigma<\frac{b-a}n$.   Hence, if 
      $(x,y,z)\in Q_a$ we have $$|p_i|\leq a<b, |q_i|\leq 1\;\hbox{ for } i=1,\dots, n-1,  p_n=b, |q_n|<1,$$ i.e. $\Psi(Q_a)\subset \p_nU_b$. On the  other hand $$\Psi^*\mu=\Psi^*\beta =
       d\left( {z+\sigma\sum\limits_1^{n-1}x_iy_i} -\sigma \sum\limits_1^{n-1} x_iy_i\right)-\sum\limits_1^{n-1}   y_i dx_i=\alpha_\st.$$

    There exists a domain $\wh U_b$, diffeomorphic to a ball with smooth boundary, such that     \begin{itemize} 
    \item $U_b\subset\wh U_b\subset U_{b'} $ for some $b'\in(b,\frac C2)$; 
       \item $\p\wh U_b \supset\p_n U_b;$
    \item $\wh U_b$ is transverse to the Liouville field $T$, 
    $\omega$-dual to the Liouville form $\mu$.     \end{itemize}
   Note that there exists a Lyapunov function $\psi:\wh U_b\to\R$ for $T$ such that   $(\wh U_b,\om,T, \phi)$ is a Weinstein domain.
    
     Denote $\wt U_b:=\Phi (H(U_a))\Subset X_+$.
    We can assume that the path $\Gamma$ connects a point on $Y$ with a point on $\p\wt U_b\setminus \Phi(H((\p_n U_b))$.
 
 We modify the Liouville form $\lambda$, making it equal to $0$  on the path $\Gamma$ and equal to $\Phi_*H_*\mu$ on $\wt U_b$.  Next, we use Lemma \ref{lm:surgery} to construct the  required   cobordism $(W,\om, \wt X,\phi)$  by  connecting $X_-$  and $\wh U_b$ via a Weinstein surgery along $\Gamma$, and then apply Proposition  \ref{prop:cancellation} to cancel the zeroes of the Liouville field $\wt X$. As a result we ensure properties (i)--(iii). In fact, property (iv) also holds. Indeed, by construction $\p_+W\supset \Phi(H(\p_n U_b))$, and hence there exists a contact form preserving embedding $(Q_a,\alpha_\st)\to (\p_+W,\wt \alpha:=\iota(\wt X)\omega|_{\p_+W})$.  
      \end{proof}

 \begin{proof}[Proof of Proposition \ref{prop:balancing} for $n=3$]
    The problem in the case $n=3$ is that we cannot get sufficiently many disjoint contact neighborhoods  $Q_C$ embedded into $Y$ to  balance all the intersection points. Indeed, both the number of intersection  of action $<\eps$ and the number of $Q_{12\eps}$-neighborhoods one can pack into  contact slice $Y$ grow as $\eps^{-3}$ when $\eps\to 0$.    However, using the inifinite Gromov width assumption we can cite Lemma \ref{lm:inserting-QC} to modify $Y$ so that it  would contain a sufficient number of  disjoint neighborhoods isomorphic to $Q_{12\eps}$.  Indeed, suppose that there are $N$ double points af action $<\eps$. By the infinite Gromov  width assumption  there exists $N$ disjoint embeddings  of polydiscs $P( 24\eps)$  into $X_+\setminus f(L)$.
  
    Using Lemma  \ref{lm:inserting-QC}, we modify the Liouville form $\lambda$ into $  
  \wt\lambda$ away from $f(L)$, so that $(X,\wt\lambda)$ admits a negative end bounded by a contact     slice $\wt Y$ such that  there exists $N$ disjoint embeddings 
  $(Q_{12\eps},\alpha_\st)\to(\wt Y,\wt\alpha)$ preserving the contact form. The rest of the proof is identical to the case $n>3$.
   \end{proof}

\section{Proof of  main theorems}\label{sec:proofs}
 \begin{proof}[Proof of Theorem  \ref{thm:main-imm}]

 We first use  Proposition  \ref{prop:balancing} to make the Lagrangian immersion $f$ balanced and then use the following
    modified Whitney trick to eliminate
each balanced   Whitney pair.

Let $p,q\in X$ be a balanced Whitney  pair, $p^0,p^1\in L$ and $q^0,q^1\in L$ the  pre-images of the self-intersection points $p,q$, and $\gamma^0,\gamma^1:[0,1]\to L$ are the corresponding paths such that 
  $ \gamma^j(0)=p^j  , \gamma^j(1)=q^j$ for    $j=0,1$,   
 the intersection index of $df(T_{p^0}L)$ and  $df(T_{p^1}L)$ is equal to $1$  and  the intersection index of $df(T_{q^0}L)$ and  $df(T_{q^1}L)$ is equal to $-1$.    
  Recall that according to our convention  we are  always ordering the pre-images of double points in such a way that their action is positive.
  
Choose a contact slice $Y$, and consider a path $\eta:[0,1]\to L$ connecting a point in the loose component $\Lambda$ of $\p L_+$ with $p^0$ such that $\overline{\eta}:=f\circ\eta$ coincides with a trajectory of $Z$ near   the point  $\overline{\eta}(0)$, and then modify the   Liouville form $\lambda $,  keeping it fixed on $X_-$,  to make it equal to $0$ on $\overline{\eta}$.  We further modify $\lambda$ in a neighborhood of $\og^0$ making it $0$ on $\og^0$, where we use the notation
  $\og^0:=f\circ \gamma^0$, $\og^1:=f\circ\gamma^1$.
Note that this is possible because $Y \cup \overline{\eta} \cup \og^0$ deformation retracts to $Y$.   Assuming that this is done, we  observe that
  $ \int\limits_{\og^1} \lambda=  \int\limits_{\og^0} \lambda=0$.
      
 Next, we use Lemma \ref{lm:surgery-index0} to construct Darboux charts $B_p$ and $B_q$ centered  at the  points $p$ and $q$ such that the the intersecting branches in these coordinates look like   coordinate Lagrangian planes $\{q=0\}$ and $\{p=0\}$ in the standard $\R^{2n}$. Set  $\lambda_\st:=\frac 12  \sum\limits_1^n p_i dq_i- q_i dp_i$. Then the corresponding to it  Liouville vector field $Z_\st=\frac12\sum\limits_1^nq_i\frac{\p}{\p q_i}+p_i\frac{\p}{\p p_i} $   is tangent to the Lagrangian planes through the origin.
 
We have $\lambda_\st-\lambda=dH$ in  $B_p\cup B_q$. Choosing a cut-off function $\alpha$ on $B_p\cup B_q$ which is equal to $1$ near $p$ and $q$ and equal to $0$ near $\p B_p\cup\p B_q$ we define $\lambda_1:=\lambda+d(\alpha H)$. The Liouville structure $\lambda_1$ coincides with the standard structure $\lambda_\st$ in smaller balls around  the points  $p$ and $q$, and with $\lambda$ near $\p B_p\cup\p B_q$. 
 
Next, we use Lemma \ref{lm:surgery} to modify the Liouville structure $\lambda_1$ in neighborhoods of paths  $\og^0$ and $\og^1$ and create Weinstein domain $C$ by attaching handles of index $1$ with $\og^0$ and $\og^1$ as their cores.  The corresponding Lyapunov function on $C$ has  two critical points of index $0$, at $p$ and $q$, and two critical points of index $1$, at the centers of paths $\og^0$ and $\og^1$.   
Note that the property $\int\limits_{\og^j}\lambda_1=0$, $j=0,1$, is crucial in order to apply Lemma \ref{lm:surgery}.

Next, we  choose an embedded 
  isotropic disc $\Delta\subset X_+\setminus \Int C$ with boundary in $\p C$,   tangent to $Z$ along the boundary $\p \Delta$, and such that $\p\Delta$ is isotropic, and   homotopic in $C$ to the loop $\og^0 \cup \og^1$. We then  again use Lemma \ref{lm:surgery} to attach to $C$ a handle of index $2$ with the  core $\Delta$. The resulted Liouville domain $\wt C$ is diffeomorphic to the $2n$-ball. Moreover, according to Proposition \ref{prop:cancellation}  the Weinstein structure on $\wt C$ is homotopic to the standard one via a homotopy fixed on $\p\wt C$.
In particular,  the contact structure induced on the sphere  $\p \wt C$ is
the    standard one. The immersed Lagrangian manifold $f(L)$ intersects $\p\wt C$ along two Legendrian spheres $\Lambda^0$ and $\Lambda^1$, each of which is the standard Legendrian unknot which bounds an   embedded Lagrangian disc  inside $\wt C$.  These two discs intersect  at two points, $p$ and $q$. Note that the Whitney trick allows us to disjoint these discs by a smooth (non-Lagrangian) isotopy fixed on their boundaries. In particular, the spheres  $\Lambda^0$ and $\Lambda^1$ are smoothly unlinked. If they were unlinked as Legendrians we would be done. Indeed, the Legendrian unlink in $S^{2n-1}_\std$ bounds two disjoint exact Lagrangian disks in $B^{2n}_\std$. Unfortunately (or fortunately, because this would kill Symplectic Topology as a subject!), one can show that it is impossible to unlink $\Lambda^0$ and $\Lambda^1$ via a Legendrian isotopy. 
  
The path $\oeta $ intersects $\p\wt C$ at a point in $\Lambda^0$. Slightly abusing the notation we will continue using  the notation $\oeta$ for  the part of $\oeta$ outside the ball $\wt C$. We then use Lemma \ref{lm:surgery} one more time to modify $\lambda_1$ by attaching a handle of index $1$ to $X_-\cup \wt C$ along $\oeta$. As a result, we create inside $X_+$ a Weinstein cobordism $W$ which contains $\wt C$, so that $\p_-W=Y$  and $\wt Y:=\p_+W$ intersects $f(L)$ along a $2$-component  Legendrian link. 
One of its components is $\Lambda^1$, and the other one is the connected sum of the loose Legendrian $\Lambda$ and the Legendrian sphere $\Lambda^0$, which we denote by $\wt \Lambda$.
  Again applying   Proposition \ref{prop:cancellation} we can deform the Weinstein structure on $W$ keeping it fixed on $\p W$ to kill both critical points  inside $W$. Hence all trajectories of the (new) Liouville vector field $Z$ inside $W$ begin at $Y$ and end at $\wt Y$, and thus $W$ is Liouville isomorphic to $\wt Y \times [0, T]$ for some $T$ (with Liouville form $e^t\lambda_1$, $t \in [0, T]$). We also note that the intersection of $f(L)$ with $W$ consists of two embedded Lagrangian submanifolds $A$ and $B$ transversely intersecting in the points $p,q$, where
  \begin{itemize}
  \item $A$ is diffeomorphic to the cylinder $\Lambda \times [0,1]$, $A\cap Y=\Lambda$ and $A\cap\wt Y=\wt\Lambda$;
  \item $B$ is a disc bounded by the Legendrian sphere  
   $\Lambda^1=B\cap \wt Y$.
  \end{itemize}
 
 The Legendrian $\wt \Lambda$ is smoothly unlinked with $\Lambda^1$. Since $\wt \Lambda$ is loose, Proposition \ref{prop:Murphy} implies that there is a Legendrian isotopy of $\wt \Lambda$ to $\wh \Lambda$ which is disjoint from a Darboux ball containing $\Lambda^1$. We realize this isotopy by a Lagrangian cobordism $A_1$ from $\wt \Lambda$ to $\wh \Lambda$ using Lemma \ref{lm:Leg-Lag}, and also realize the inverse isotopy by a Lagrangian cobordism $A_2$ from $\wh \Lambda$ to $\wt \Lambda$. For some $\wt T$, these cobordisms embed into $\wt Y \times [0, \wt T]$. Inside $\wt Y \times [0, 2 \wt T + 2T]$, we define a cobordism $\wt A$ from $\Lambda$ to $\wt \Lambda$, built from the following pieces. 
 \begin{itemize}
 \item $\wt A \cap \wt Y \times [0, T] = A$, 
 \item $\wt A \cap \wt Y \times [T, \wt T+T] = A_1$,
 \item $\wt A \cap \wt Y \times [\wt T + T, \wt T +2T] = \wh \Lambda \times [\wt T + T, \wt T +2T]$,
 \item  $\wt A \cap \wt Y \times [\wt T + 2T, 2\wt T +2T] = A_2$.
 \end{itemize}
 We then define $\wt B$ by 
 \begin{itemize}
 \item $\wt B \cap \wt Y \times [0, \wt T + T] = \varnothing$,
 \item $\wt B \cap \wt Y \times [\wt T + T, \wt T + 2T] = B$,
 \item $\wt B \cap \wt Y \times [\wt T + 2T, 2\wt T + 2T] = \Lambda^1 \times [\wt T + 2T, 2\wt T + 2T]$.
 \end{itemize}
 
       \begin{figure}
 \center{\includegraphics[height=85mm]{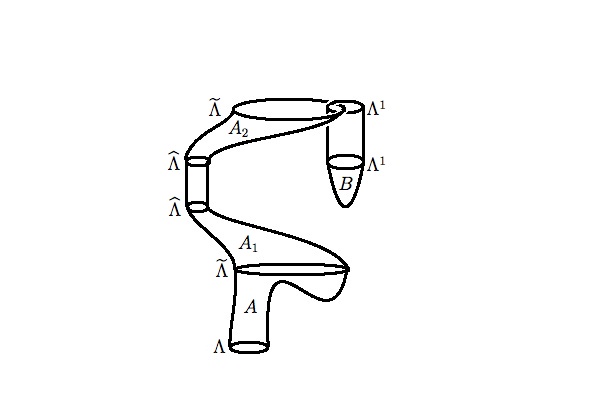}} \caption{The cobordisms $\wt A$ and $\wt B$.} \label{fig:cobord}
\end{figure}
 
 A schematic of these cobordisms is given in Figure ~\ref{fig:cobord}. After elongating $W$ (which can be achieved by choosing a contact slice closer to $-\infty$), $\wt A \cup \wt B$  can be deformed  to $A \cup B$ via a Hamiltonian  compactly supported regular homotopy fixed on the boundary. We then define $\wt f:L \to X$ to be equal to $f$ everywhere, except the portions of $L$ which are mapped to $A$ and $B$ are instead mapped to $\wt A$ and $\wt B$, respectively.
 \end{proof}
  
 \medskip
 
\begin{proof}[Proof of  Theorem \ref{thm:main}]
We first use Gromov's $h$-principle for Lagrangian immersions \cite{Gr-PDR} to find a compactly supported regular homotopy   
starting at $f$ and ending at a Lagrangian immersion $\wt f $ with the prescribed action class $A(f)$ (or the action class $a(f)$ in the Liouville case). More precisely, let us choose a triangulation of $L$. There are finitely many simplices of the triangulation which cover the compact part of $L$ where the embedding $f$ is not yet Lagrangian. Let $K$ be the polyhedron which is formed by these simplices. Using the $h$-principle for open Lagrangian immersions, we first isotope $f$ to an  embedding which is Lagrangian near the $(n-1)$-skeleton of $K$, realizing the given (relative) action class. Let us inscribe an $n$-disc  $D_i$ in each of  the $n$-simplices of $K$, such that the embedding $f$ is already Lagrangian near $\p D_i$. Next, we thicken $D_i$ to {\it disjoint} $2n$-balls $B_i\subset X$ intersecting $f(L)$ along $D_i$. We then apply Gromov's $h$-principle for Lagrangian immersions in a relative form to find for each $i$ a fixed near the boundary regular homotopy $D_i \to B_i$ of $D_i$ into  a Lagrangian immersion.  Note that all the self-intersection points of the resulted Lagrangian immersion $\wt f$  are localized inside the ball $B_i$ and images of different discs $D_i$ and $D_j$ do not intersect.

  Let us choose a negative  end $X_-$, bounded by a contact slice $Y$  in such a way that the immersion  $\wt f$  is cylindrical  in it and $X_-\cap\bigcup B_i=\varnothing$. Denote  $L_-:=\wt f^{-1}(X_-), \Lambda_-=\p L_-$.
   Let us choose a universal loose Legendrian $U \subset  Y$ for the Legendrian submanifold $\Lambda_-\subseteq Y$. Denote $\wt\Lambda_-=\Lambda_-\cap U$.
   Let  $V_-:=\bigcup\limits_0^{\infty} Z^{-s}(U)\subset
    X_-$ be the domain in $X_-$ formed by all negative trajectories of $Z$ intersecting $U$.   Let us choose disjoint paths $\Gamma_i$ in $L\setminus \Int (L_- \cup\bigcup_i D_i)$ connecting  some points in $\wt\Lambda_-$ with points $z_i \in \p D_i$ for each $n$-simplex in $K$. Choose small tubular neighborhoods $U_i$ of $\wt f(\Gamma_i)$ in $X$
    
  Set  $$\wt X:= V_-\cup  \bigcup_i (B_i\cup U_i)\;\;\hbox{and}
 \;\;\wt L:= \wt f^{-1}(\wt X).$$  
 The manifold $\wt X$ deformationaly retracts to $V_-$ and hence $\wt X$ is contractible and the Liouville form $\lambda|_{V_-}$ extends as a Liouville form for $\omega$ on the whole manifold $\wt X$. We will keep the notation $\lambda$ for the extended form. Thus $\wt L$ is an exact Lagrangian immersion into the contractible Liouville manifold $\wt X$, cylindrical at $-\infty$ over a loose Legendrian submanifold of $U$.
 Moreover, $L$ is diffeomorphic to $\R^n$, and outside a compact set the immersion is equivalent to the standard inclusion $\R^n\hookrightarrow\R^{2n}$. We also note that $I(\wt f|_{\wt L}: \wt L\to\wt X) =  0$ since this immersion is regularly homotopic to the smooth embedding $f|_{\wt L}:\wt L\to\wt X$.
 
 Applying Theorem  \ref{thm:main-imm} to $\wt f|_{\wt L}$ 
we find an exact Lagrangian embedding $\wh f$ which is regularly Hamiltonian homotopic to $\wt f|_{\wt L}$  via a regular homotopy   compactly supported in $\wt X$. We further note
that the embeddings $\wh f$ and $f:\wt L\to\wt X$ are isotopic relative the boundary.
Indeed, it follows from the $h$-cobordism theorem that an embedding
$\R^n\to \R^{2n}$ which coincides with the inclusion outside a compact set  and which is regularly homotopic to it via a compactly supported homotopy is isotopic to the inclusion relative infinity. 

Slightly abusing notation we define $\wh f:L \to X$ to be equal to $\wt f$ on $L\setminus \wh L$. This Lagrangian embedding is isotopic to $f$ via an isotopy fixed outside a compact set. Finally we note that $d \wt f:TL \to TX$ is homotopic to $\Phi_1$ since it is constructed with the $h$-principle for Lagrangian immersions, and $d \wh f$ is homotopic to $d \wt f$ since they are regularly Lagrangian homotopic.
 \end{proof}
 
 Next, we deduce Theorem  \ref{thm:caps} from 
    Theorem \ref{thm:main}.
 
 \begin{proof}[Proof of Theorem \ref{thm:caps}]
  Let $B$ be the unit  ball in $\R^{2n}$. The triviality of the bundle $T(L)\otimes\C$ is equivalent to existence of a Lagrangian homomorphism $\Phi:TL
 \to T\C^n$. We can assume that $\Phi$ covers a
  map $\phi:L\to\C^n\setminus \Int B$ such that $\phi(\p L)\subset \p B$.
   Let $v\in TL|_{\p L}$ be the inward normal vector field to $\p L$ in $L$, and $\nu$ an outward normal to the boundary $\p B$ of the ball $B\subset\C^n$.
    Homomorphism $\Phi$ is homotopic to a Lagrangian homomorphism, which will still be denoted by $\Phi$, sending $v$ to $\nu$. Indeed,  the  obstructions to that lie   in trivial  homotopy group $\pi_j(S^{2n-1})$,  $j\leq n-1$. Then $\Phi|_{T\p L}$ is a Legendrian homomorphism $T\p L\to\xi$, where $\xi$ is the  standard contact structure on the sphere  $\p B$ formed by its  complex tangencies.
    Using   Gromov's $h$-principle for Legendrian embeddings we can, therefore, assume that $\phi|_{\p L}:\p L\to \p B$ is a Legendrian embedding, and then, using Gromov's $h$-principle for Lagrangian immersions deform $\phi$ to an exact Lagrangian immersion  $\phi:L\to\C^n\setminus \Int B$ with Legendrian boundary in $\p B$ and tangent to $\nu$ along the boundary. Finally, we use Theorem \ref{thm:main} to make $\phi$ a Legrangian embedding.
 \end{proof}

 \section{Applications}\label{sec:applications}
 
\subsubsection*{Lagrangian embeddings with a conical singular point}

Given a symplectic manifold $(X,\om)$ we say that $L\subset  M$ is a {\it  Lagrangian submanifold with  an isolated conical point} if  it is  a Lagrangian submanifold away from a point $p\in L$, and there exists a symplectic embedding $f:B_\eps\to X$ such that $f(0)=p$  and $f^{-1}(L)\subset B_\eps$ 
  is a Lagrangian cone. Here $B_\eps$ is the ball of radius $\eps$ in the standard symplectic $\R^{2n}$.
  Note that this cone  is automatically a cone over a Legendrian sphere in  the sphere $\p B_\eps$ endowed with the  standard contact structure given by the restriction to $\p B_\eps$  of the Liouville form
  $\lambda_\st=\frac12\sum\limits_1^n(p_idq_i-q_idp_i)$.
 
  As a special case of Theorem \ref{thm:caps} (when $\p L$ is a sphere) we get  
 
\begin{cor}\label{cor:conic}
Let $L$ be an $n$-dimensional, $n>2$, closed  manifold such that the complexified tangent bundle $T^*(L\setminus p)\otimes\C$ is trivial. Then $L$ admits an exact Lagrangian embedding into $\R^{2n}$ with exactly  one  conical  point. In particularly  a sphere admits a Lagrangian embedding to $\R^{2n}$ with one conical point for each $n>2$.
\end{cor}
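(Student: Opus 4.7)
The plan is to deduce the corollary from Theorem~\ref{thm:caps}. Choose a small open disc neighborhood $D$ of $p$ and set $L':=L\setminus D$, a compact $n$-manifold with boundary $\partial L'=S^{n-1}$. The bundle $TL'\otimes\C$ is trivial, being the restriction of the trivial bundle $T(L\setminus p)\otimes\C$ to $L'$. Since $n>2$, Theorem~\ref{thm:caps} applied to $L'$ produces an exact Lagrangian embedding $f:(L',\partial L')\to(\R^{2n}\setminus\Int B,\partial B)$ with $\Lambda:=f(\partial L')\subset\partial B$ a Legendrian sphere and $f$ transverse to $\partial B$ along $\partial L'$.

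I then cone off the Legendrian sphere $\Lambda$ to the origin. Set $C:=\{tx\mid x\in\Lambda,\ t\in[0,1]\}\subset\overline B$. Because $\Lambda$ is Legendrian in $(\partial B,\ker\lambda_\std)$, where $\lambda_\std=\tfrac12\sum_i(p_i\,dq_i-q_i\,dp_i)$, the pullback of $\lambda_\std$ to $C$ vanishes on both the Legendrian tangent directions and the radial direction, so $C$ is an exact Lagrangian submanifold smooth away from the origin and modelled on a Lagrangian cone at $0$. The union $\widehat L:=f(L')\cup C\subset\R^{2n}$ is then diffeomorphic to $L$ and, provided the gluing along $\Lambda$ is $C^1$, is an exact Lagrangian submanifold of $\R^{2n}$ with a single isolated conical point at $0$.

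The only delicate step is smoothness of $\widehat L$ along $\Lambda$: the tangent space of $C$ at $x\in\Lambda$ is $T_x\Lambda\oplus\R\cdot x$, so I need $f(L')$ to share this tangent space, which amounts to $f$ being radially invariant (cylindrical) in a collar of $\partial L'$. The proof of Theorem~\ref{thm:caps} goes through Theorem~\ref{thm:main}, which outputs Lagrangians that are cylindrical at $-\infty$ in the Liouville completion $\R^{2n}\setminus\{0\}\supset\R^{2n}\setminus\Int B$, so this property is automatic. Failing that, one can invoke a Weinstein-collar argument: near the Legendrian $\Lambda$ there is a standard model $\Lambda\times(1-\eps,1+\eps)$ in which any exact Lagrangian meeting $\partial B$ transversely along $\Lambda$ is Hamiltonian-isotopic, through a compactly supported isotopy, to the cylinder $\{tx\mid x\in\Lambda,\ t\in(1-\eps,1+\eps)\}$. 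This is the only nontrivial verification in the reduction; everything else is direct. The special case $L=S^n$ ($n>2$) follows because $S^n\setminus\{p\}\cong\R^n$ is parallelizable, hence has trivial complexified tangent bundle.
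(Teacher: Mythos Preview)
Your proposal is correct and follows exactly the approach the paper indicates: the corollary is stated there as an immediate special case of Theorem~\ref{thm:caps} applied to $L':=L\setminus D$ with spherical boundary, and your argument simply fills in the coning construction that the paper leaves implicit. Your smoothness discussion along $\Lambda$ is an appropriate elaboration, and indeed the proof of Theorem~\ref{thm:caps} already arranges the Lagrangian to be tangent to the outward normal~$\nu$ along $\partial B$, so the cylindricality you need is built in.
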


 \subsubsection*{Flexible Weinstein cobordisms}
       
 The following notion of a flexible Weinstein cobordism is introduced in \cite{CieEli-Stein}.     
    
  A Weinstein cobordism  $(W,\om,Z,\phi)$ is called  {\it elementary} if there are
  no $Z$-trajectories connecting critical points.
   In this case stable manifolds of critical points intersect $\p_-W$ 
   along  isotropic in the contact sense submanifolds.  
   For each  critical point $p$ we call the intersection $S_p$ of its stable manifold with $\p_-W$  the  {\it attaching  sphere}.
    The attaching spheres for index $n$ critical points are Legendrian.

 An elementary  Weinstein cobordism  $(W,\om,Z,\phi)$ is called {\it flexible  } if  the attaching spheres for all index $n$ critical points in $W$ form a  loose Legendrian link in $\p_-W$.

  A Weinstein cobordism $(W,\om,Z,\phi)$ is called {\it flexible}
  if it can be partitioned into elementary Weinstein cobordisms:  $W=W_1\cup\dots\cup W_N$, $W_j:=\{c_{j-1}\leq \phi\leq c_j
\}, j=1,\dots, N$, $m=c_0<c_1<\dots< c_N=M$. Any subcritical Weinstein cobordism is by definition flexible.
  
  \begin{thm}\label{thm:self-embed}
  Let $(W,\om,Z,\phi)$ be a flexible Weinstein domain. Let $\lambda$ be the Liouville  form $\om$-dual to $Z$, and $\Lambda$ any other Liouville form such that the symplectic structures $\om$ and $\Omega:=d\Lambda$ are homotopic as non-degenerate (not necessarily closed) $2$-forms.
  Then  there exists an isotopy $h_t:W\to W$ such that $h_0=\Id$ and
  $h_1^*\Lambda=\eps\lambda+dH$ for a sufficiently small $\eps>0$ and a  smooth function $H:W\to\R$. In particular, $h_1$ is a symplectic embedding $(W,\eps\om)\to(W,\Omega)$.
  \end{thm}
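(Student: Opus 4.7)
The plan is a handle-by-handle induction: embed the Weinstein skeleton of the scaled structure $(W,\eps\lambda)$ isotropically into $(W,\Omega)$, extend to a Liouville embedding via the Weinstein neighborhood theorem, and then promote to a self-isotopy.

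Let $K\subset W$ be the skeleton of $(W,\omega,Z,\phi)$, i.e.\ the union of stable manifolds of critical points of $\phi$ under the flow of $-Z$. It is an isotropic CW complex with cells of dimension $k\leq n$; flexibility means that the $n$-cells, which are Lagrangian disks, are attached along Legendrian spheres that form a loose link in each intermediate contact slice $\partial_-W_j$. The flow of $-Z$ deformation retracts $W$ onto $K$, so it will suffice to embed $K$ into $(W,\Omega)$ isotropically, with the correct Weinstein neighborhood data, and then show the resulting self-embedding is smoothly isotopic to $\Id_W$. The homotopy from $\omega$ to $\Omega$ through non-degenerate $2$-forms produces a homotopy of Lagrangian Grassmannians, hence formal $\Omega$-isotropic framings of $TK$ inside $TW$ along the inclusion, providing the formal data for the $h$-principles invoked below.

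Work inductively along the decomposition $W=W_1\cup\cdots\cup W_N$ into elementary subcobordisms. Over a subcritical $W_j$ (all handle indices $<n$), extend the embedding of $K$ to the new isotropic cells using Gromov's $h$-principle for isotropic embeddings in codimension $\geq n+1$, then use the Weinstein neighborhood theorem to extend to a Liouville embedding of the scaled subcobordism $(\eps\lambda)|_{W_j}$. Over a critical $W_j$ (index $n$ handles), each top-dimensional cell $\Delta\subset K\cap W_j$ has boundary on one component of the loose Legendrian attaching link in $\partial_-W_j$; by the inductive hypothesis, this component is now embedded as a Legendrian in the contact boundary $\wt Y$ of the already-constructed Liouville subdomain of $(W,\Omega)$, where it remains loose in the complement of the others because looseness is detected by a local contactomorphism model. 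Apply Theorem~\ref{thm:main} to the complement of the embedded subdomain, viewed as a Liouville manifold with a negative contact end along $\wt Y$, to realize $\Delta$ as an exact Lagrangian embedding compatible with the formal tangential data, and then apply the Weinstein neighborhood theorem to extend to a Liouville embedding of the scaled $W_j$.

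After the induction we obtain a Liouville embedding $g:W\hookrightarrow W$ with $g^*\Lambda=\eps\lambda+dH$ for some smooth $H$. Since $g$ was assembled from $h$-principle constructions applied to data formally homotopic to the inclusion, it is smoothly isotopic to $\Id_W$ through embeddings, providing the required isotopy $h_t$ with $h_0=\Id$ and $h_1=g$. The main obstacle is the index-$n$ inductive step: one must verify that the attaching spheres really remain loose in the contact boundary of the embedded subdomain (using the locality of looseness together with flexibility in the original structure) and that the formal Lagrangian data required by Theorem~\ref{thm:main} is actually produced by the homotopy of non-degenerate $2$-forms. All other steps---subcritical extensions, Weinstein neighborhood extensions, and the final ambient isotopy---are standard $h$-principle bookkeeping.
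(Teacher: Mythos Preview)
Your proposal is correct and follows essentially the same approach as the paper: both argue by induction over a decomposition into elementary flexible cobordisms, invoking Gromov's $h$-principle for subcritical cores and Theorem~\ref{thm:main} for the index-$n$ Lagrangian discs, then thicken via Weinstein neighborhoods. The only organizational difference is that the paper constructs the isotopy $h_t$ explicitly at each inductive step by composing the $h$-principle isotopy with the Liouville contraction $Z^{-T}$ (so that the scaling factor $\eps_j=e^{-T}\eps_{j-1}$ appears naturally), whereas you first assemble the Liouville embedding $g$ and then appeal to isotopy extension to connect it to $\Id_W$; this is cosmetic, since the $h$-principles you invoke already produce isotopies that can be extended ambiently.
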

  Recall that a  Weinstein cobordism $(W,\om,Z,\phi)$ is called a {\it Weinstein domain} if $\p_-W=\varnothing$.
\begin{cor}\label{cor:flexible-embed}
Let $(W,\om,Z,\phi)$ be a flexible Weinstein domain, and $(X,\Omega)$ any symplectic manifold of the same dimension. If this dimension is $3$ we further assume that $X$ has infinite Gromov width. Then any smooth embedding $f_0:W\to X$, such that  the form $f_0^*\Omega$ is exact and the differential $df:TW\to TX$ is homotopic to a symplectic homomorphism, is isotopic to a symplectic embedding $f_1:(W,\eps\om)\to (X,\Omega)$ for a sufficiently small $\eps>0$.
Moreover, if $\Omega=d\Theta$  then the embedding $f_1$ can be chosen in such a way that  the 1-form $f_1^*\Theta-i(Z)\om$ is exact. If, moreover,
the  $\Omega$-dual to $\Theta$ Liouville vector field   is complete then
the embedding $f_1$ exists for an arbitrarily large constant $\eps$.
\end{cor}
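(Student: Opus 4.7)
The plan is to reduce the corollary to Theorem \ref{thm:self-embed} by a Moser-type pullback: the embedding $f_0$ is used to transport the ambient Liouville data from $X$ back to $W$, and Theorem \ref{thm:self-embed} is then applied intrinsically on $W$ to produce a self-isotopy whose composition with $f_0$ is symplectic.

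Since $f_0^*\Omega$ is exact, choose a primitive $\Lambda$ on $W$ with $d\Lambda=f_0^*\Omega$. To apply Theorem \ref{thm:self-embed} I must verify that $\omega$ and $d\Lambda$ are homotopic as (not necessarily closed) non-degenerate $2$-forms on $W$. Let $\Psi_t:TW\to TX$ be a homotopy of injective bundle homomorphisms covering $f_0$, with $\Psi_0=df_0$ and $\Psi_1$ a symplectic homomorphism; such a family exists by hypothesis. Because $\dim W=\dim X$, each $\Psi_t$ is a fiberwise linear isomorphism, hence $\Psi_t^*\Omega$ is a continuous path of pointwise non-degenerate $2$-forms on $W$ interpolating between $f_0^*\Omega=d\Lambda$ at $t=0$ and $\omega=\Psi_1^*\Omega$ at $t=1$. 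Thus the data $(W,\omega,Z,\phi)$ together with $\Lambda$ fulfill the hypotheses of Theorem \ref{thm:self-embed}, which yields an isotopy $h_t:W\to W$ with $h_0=\Id$, a sufficiently small $\eps>0$, and a smooth function $H:W\to\R$ such that $h_1^*\Lambda=\eps\lambda+dH$.

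Set $f_t:=f_0\circ h_t$. This is an isotopy of smooth embeddings beginning at $f_0$, and a direct computation yields
$$
f_1^*\Omega \;=\; h_1^*(d\Lambda) \;=\; d(h_1^*\Lambda) \;=\; d(\eps\lambda+dH) \;=\; \eps\om,
$$
so $f_1\colon(W,\eps\om)\to(X,\Omega)$ is the desired symplectic embedding isotopic to $f_0$. For the refined statement when $\Omega=d\Theta$ globally, one takes $\Lambda:=f_0^*\Theta$, a global primitive of $f_0^*\Omega$; then $f_1^*\Theta=h_1^*\Lambda=\eps\lambda+dH$ shows that $f_1^*\Theta-\eps\lambda$ is exact. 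For the completeness addendum, if the Liouville vector field $Z_\Theta$ dual to $\Theta$ is complete on $X$, its flow $\psi_s$ satisfies $\psi_s^*\Theta=e^s\Theta$ and so $\psi_s^*\Omega=e^s\Omega$; replacing $f_1$ by $\psi_s\circ f_1$ rescales the conformal factor from $\eps$ to $e^s\eps$, which can be made arbitrarily large.

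All the substantive content of the corollary is packaged inside Theorem \ref{thm:self-embed}. The routine parts of the plan above (choosing a primitive of $f_0^*\Omega$, upgrading a homotopy of injective bundle homomorphisms to a homotopy of non-degenerate $2$-forms via the equidimensional pullback, and then running the pullback computation) are straightforward; the main obstacle and the only place where the infinite Gromov width hypothesis for $n=3$ intervenes lies inside Theorem \ref{thm:self-embed}, which itself is built on the Lagrangian $h$-principle machinery of Theorem \ref{thm:main-imm}.
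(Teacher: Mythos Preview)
Your reduction is correct and matches the paper's intent: the corollary is stated immediately after Theorem \ref{thm:self-embed} with no separate proof, and your pullback argument---using that a codimension-zero embedding makes $f_0^*\Omega$ automatically non-degenerate, so that $\Lambda$ is a genuine Liouville form on $W$---is the evident derivation.

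One small wrinkle worth flagging concerns the infinite-Gromov-width clause when $n=3$. Theorem \ref{thm:self-embed} as stated carries no such hypothesis, so your reduction formally never invokes the assumption on $X$; yet the proof of Theorem \ref{thm:self-embed} ultimately calls Theorem \ref{thm:main}, which does need it, and in your setup the target would be $(W,f_0^*\Omega)\cong (f_0(W),\Omega)$, a compact domain with finite width. To make honest use of the hypothesis on $X$ one should rerun the handle-by-handle argument of Theorem \ref{thm:self-embed} with target $(X,\Omega)$ directly rather than with the pullback on $W$. This is a looseness in the paper's exposition rather than in your reasoning, and for $n>3$ your argument goes through exactly as written.
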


    \begin{proof}[Proof of Theorem \ref{thm:self-embed}] 
   
   Let us decompose $W$ into flexible elementary cobordisms:
   $W=W_1\cup\dots\cup W_k$, where $W_j=\{c_{j-1}\leq \phi\leq c_j\}$, $j=1,\dots, k$ for a sequence of regular values $c_0<\min\phi<c_1<\dots<c_k=\max\,\phi$ of the function $\phi$. Set $V_j=\bigcup\limits_1^j W_i$  for $j\geq 1$ and $V_0=\varnothing$.
     
     We will construct an isotopy $h_t:W\to W$ beginning from $h_0=\Id$
    inductively over cobordisms $W_j$, $j=1,\dots, k$. It will be convenient to parameterize the required isotopy by the interval $[0,2k]$.
 Suppose that  for some $j=1,\dots, k$ we already constructed an isotopy $h_t:W\to W$, $t\in[0,j-1]$   such that   $h^*_{j-1}\Lambda=\eps_{j-1}\lambda+dH$ on $V_{j-1}$. Our goal is  to extend it $[j-1,j]$ to ensure  that $h_j$ satisfies this condition on $V_j$. Without loss of generality we can assume that there exists only 1 critical point  $p$ of $\phi$ in $W_j$. Let $\Delta$ be the stable disc of $p$ in $W_j$ and $S :=\p\Delta \subset \p_-W_j$ the corresponding attaching sphere.   By assumption, $S $ is subcritical or loose. The homotopical condition implies that there is a family of injective homomorphisms $\Phi_t:T\Delta\to TW$, $t\in[j-1,j]$,  such that $\Phi_{j-1}=dh_{j-1}|_{\Delta_j}$, and $\Phi_j:T\Delta_j\to (TW,\Omega)$ is an isotropic homomorphism. We also note that  the cohomological condition implies that $\int\limits_\Delta\Omega=0$ when $\dim\Delta=2$. Then, using Theorem \ref{thm:main} when $\dim\Delta=n$    and Gromov's $h$-principle, \cite{Gr-PDR}, for isotropic embeddings in the subcritical case, we can construct an isotopy $g_t:\Delta  \to W_j$, $t\in[j-1,j]$,  fixed  at $\p \Delta $, such that $g_{j-1}=h_{j-1}|_{\Delta}$ is the inclusion and    the embedding
 $g_j:\Delta\to (W,\Omega)$ is isotropic.  Furthermore, there exists    a neighborhood  $U\supset\Delta $ in $W_j$ such that
 the isotopy $g_t$ extends  as a  fixed on $W_{j-1}$ 
  isotopy $G_t:W_{j-1}\cup U\to W$ such that $G_t|_{\Delta}=g_t$, $G_t|_{W_j}=h_{j-1}|_{W_{j-1}}$ for $t\in[j-1,j]$, $G_{j-1}|_U=h_{j-1}|_U$ and     $h_j:(W_{j-1}\cup U ,\eps_{j-1}\om)\to (W, \Omega)$ is a symplectic embedding. Choose  a sufficiently large $T>0$ we have $Z^{-T}(W_j)\subset W_{j-1}\bigcup U_j$,  and hence $h_j\circ  e^{-T}|_{V_j}$ is    a  symplectic embedding $(W_j,  \eps_j\om)\to (W,\Omega)$, where we set
  $\eps_j:=e^{-T}\eps_{j-1}$. Then  we can define the required isotopy $h_t:W\to W$, $t\in[j-1,j]$, which satisfy the property that $h_j|_{V_j}$ is a symplectic embedding  $(V_j, \eps_j\om)\to (W,\om)$   by setting $$h_t=
 \begin{cases} h_{j-1}\circ Z^{-2T(t-j+1)} & \hbox{ for}\;  t\in[j-1,j-\frac12],\cr
 G_t\circ Z^{-T}& \hbox{for}  \; t\in[j-\frac12,j].
 \end{cases}
 $$
 
\end{proof}


\end{document}